\numberwithin{equation}{section}
\newtheorem{thm}{Theorem}[section]
\newtheorem{lemma}[thm]{Lemma}
\newcommand{\C}{{\mathbb C}}
\newcommand{\cn}{{\C^n}}
\newcommand{\Tr}{\operatorname{Tr}}
\newcommand{\mo}{\operatorname{MO}}
\newcommand{\bmo}{\operatorname{BMO}}
\newcommand{\bmoa}{\operatorname{BMOA}}
\newcommand{\vmoa}{\operatorname{VMOA}}
\def\A{\ensuremath{{\mathcal{A}}^2(\mu)}}
\def\cn{\ensuremath{\mathbb{C}^n}}
\def\cS{\mathcal S}
\def\cT{\ensuremath{\mathcal{T}}}
\def\Oms{\cn}
\def\littlebloch{\ensuremath{\mathcal{B}_{0}(\Psi)}}
\def\C{\ensuremath{\mathbb{C}}}
\def\A{\ensuremath{{\mathcal{A}}^2(\Psi)}}
\def\cn{\ensuremath{\mathbb{C}^n}}
\def\M{\ensuremath{{\mathcal {B}}(\Psi)}}
\def\Mmp{\ensuremath{{\mathcal{B}_{m}^{p}(\Psi)}}}
\def\Mdp{\ensuremath{{\mathcal{B}_{d}^{p}(\Psi)}}}
\begin{document}

\title[Hankel operators on Fock spaces and related Bergman kernel estimates]
{Hankel operators on Fock spaces \\ and related Bergman kernel
estimates}

\author{Kristian Seip and El Hassan Youssfi}

\address{Seip: Department of Mathematical Sciences\\
         Norwegian University of Science and Technology (NTNU)\\
         NO-7491 Trondheim\\
         Norway}
\email{seip@math.ntnu.no}

\address{Youssfi: LATP, U.M.R. C.N.R.S. 6632, CMI\\
         Universit\'e de Provence\\
         39 Rue F-Joliot-Curie\\
         13453 Marseille Cedex 13, France}
\email{youssfi@gyptis.univ-mrs.fr}

\subjclass[2000]{Primary  47B35, 32A36, 32A37}

\keywords{Bergman kernel, Hankel operator, Fock space}

\thanks{The first author is supported by the Research Council of
Norway grant 185359/V30. The second author is supported by the
French ANR DYNOP, Blanc07-198398.}

\begin{abstract} Hankel operators with anti-holomorphic symbols are studied
for a large class of weighted Fock spaces on $\cn$. The weights
defining these Hilbert spaces are radial and subject to a mild
smoothness condition. In addition, it is assumed that the weights
decay at least as fast as the classical Gaussian weight. The main
result of the paper says that a Hankel operator on such a Fock space
is bounded if and only if the symbol belongs to a certain $\bmoa$
space, defined via the Berezin transform. The latter space coincides
with a corresponding Bloch space which is defined by means of the
Bergman metric. This characterization of boundedness relies on
certain precise estimates for the Bergman kernel and the Bergman
metric. Characterizations of compact Hankel operators and Schatten
class Hankel operators are also given. In the latter case, results
on Carleson measures and Toeplitz operators along with
H\"{o}rmander's $L^2$ estimates for the $\overline{\partial}$
operator are key ingredients in the proof.
\end{abstract}

\maketitle

\section{Introduction}
This paper presents the basics of Hankel operators with
anti-holomorphic symbols for a large class of weighted Fock spaces.
Thus certain natural analogues of $\bmoa$, the Bloch space, the
little Bloch space, and the Besov spaces are identified and shown to
play similar roles as their classical counterparts do. We will see
that these spaces contain all holomorphic polynomials and are
infinite-dimensional whenever the weight decays so fast that there
exist functions of infinite order belonging to the Fock space.

The setting is the following. Consider a $C^3$-function $\Psi : [0,
+\infty[ \to [0, +\infty[ $ such that
\begin{equation}\label{admissible}
\Psi'(x)>0, \ \ \ \Psi''(x)\ge 0, \ \   \text{and} \ \ \Psi'''(x)\ge
0.\end{equation} We will refer to such a function as a logarithmic
growth function. Note that \eqref{admissible} effectively says that
$\Psi$ should grow at least as a linear function. Set
$$d\mu_\Psi (z) := e^{-\Psi(|z|^2)} d V(z),$$ where $dV$ denotes
Lebesgue measure on $\cn$, and let ${\mathcal A}^2(\Psi)$ be the
Fock space defined as the closure of the set of holomorphic
polynomials in $L^2( \mu_\Psi).$  We observe that ${\mathcal
A}^2(\Psi)$ coincides with the classical Fock space when $\Psi$ is a
suitably normalized linear function.

It is immediate that
$$s_d := \int_{0}^{+\infty} x^d e^{-\Psi(x)}d x < +\infty $$
for all nonnegative integers $d$. Moreover, as shown in \cite{BY1},
the series
 $$ F_s (\zeta) := \sum_{d=0}^{+\infty} \frac{\zeta^d}{s_{d}}, \ \zeta \in \C,$$
has an infinite radius of convergence and
 ${\mathcal A}^2(\Psi)$ is a reproducing kernel Hilbert space with reproducing kernel
 $$K_\Psi(z, w) = \frac{1}{(n-1)!} F^{(n-1)}_s(\langle z, w\rangle),  \ \ z, w \in \cn.$$
This implies that the orthogonal projection $P_\Psi$ from
$L^2(\mu_\Psi)$
 onto $ {\mathcal A}^2(\Psi)$ can be expressed as
$$(P_\Psi g)(z) = \int_{\cn} K_\Psi(z, w) g(w) d\mu_\Psi(w), \ z \in \cn,$$
for every function $g$ in $L^2(\mu_\Psi)$. The domain of this
integral operator can be extended to include functions $g$ that
satisfy $K_\Psi(z, \cdot) g \in L^1(\mu_\Psi)$ for every $z$ in
$\cn.$ This extension allows us to define (big) Hankel operators.
To do so, denote by $\cT(\Psi) $ the class of all $f$ in
$L^2(\mu_\Psi) $
 such that $f \varphi K_\Psi(z, \cdot) \in L^1(\mu_\Psi)$  for all holomorphic
polynomials $\varphi$ and $z$ in $\cn$ and the function
$$
H_{f}(\varphi) (z) :=  \int_{\cn}K_\Psi(z,w)\varphi(w) \left[
f (z) -
  f  (w)\right] d\mu_\Psi(w), \ \  z\in \cn,$$
is  in $L^2(\mu_\Psi)$. This is a densely defined
operator from  ${\mathcal A}^2(\Psi)  $ into  $L^2(\mu_\Psi)$ which will be called the
Hankel operator  $H_{f}$  with symbol $ f$.
It can be written in the form
$$
H_{f}(\varphi)=  (I-P_\Psi)( f \varphi)
$$
 for all holomorphic polynomials $\varphi$.
It is clear that the class $\cT(\Psi) $ contains all holomorphic
polynomials.

Our main theorem involves the analogues in our setting of the space
$\bmoa$ and the Bloch space. The analogue of $\bmoa$ is most
conveniently defined via the Berezin transform, which for a linear
operator $T$ on ${\mathcal A}^2(\Psi)$ is the function
$\widetilde{T}$ defined on $\Oms$ by
$$\widetilde{T}(z):=
\dfrac{\langle T K_\Psi(\cdot,z),K_\Psi(\cdot,z)
\rangle}{K_\Psi(z,z)}.$$ If $T = M_f$ is the operator of
multiplication by the function $f$, then we just set
$\widetilde{M_f} = \tilde f.$ We set
$$\| f \|_{\bmo} := \sup_{z\in \cn} (\mo f)(z),$$
where $$(\mo f)(z):= \sqrt{\widetilde{|f|^2}(z) - | \tilde
f(z)|^2}$$ and define $\bmo(\Psi)$ as the set of functions $f$ on
$\cn$ for which $\widetilde{|f|^2}(z)$ is finite for every $z$ and
$\|f\|_{\bmo}<\infty$. It is plain that $\bmo(\Psi)$ is a subset of
$\cT(\Psi)$. The space $\bmoa(\Psi)$ is the subspace of $\bmo(\Psi)$
consisting of analytic elements; this space is in turn a subset of
$\cT(\Psi)\cap {\mathcal A}^2(\Psi)$.

We next introduce the Bergman metric associated with $\Psi.$ To this
end, set $\Lambda_{\Psi}(z)=\log K_{\Psi}(z,z)$ and
\[
\beta^2(z, \xi):= \sum_{j, k =1}^n\frac{\partial^2
\Lambda_{\Psi}(z)} {\partial z_j
\partial \bar z_k}  \xi_j
\bar \xi_k\] for arbitrary vectors $z=(z_1,...,z_n)$ and
$\xi=(\xi_1,...,\xi_n)$ in $\cn$. The corresponding distance
$\varrho$ is given by
\begin{equation}\label{bergmanmetric}\varrho(z, w) := \inf_\gamma
\int_{0}^{1} \beta(\gamma(t), \gamma'(t)) dt,\end{equation} where
the infimum is taken over all piecewise $C^1$-smooth curves $\gamma:
[0, 1] \to \cn$ such that $\gamma(0) = z $ and $\gamma(1) = w $. We
define the Bloch space $\M$ to be the space of all entire funtions
$f$ such that \begin{equation}\label{blochdef}\|f\|_{\M}:=
\sup_{z\in \cn} \left[\sup_{\xi \in \cn\setminus\{0\} } \frac{\left
| \langle (\nabla f)(z), \overline{\xi} \rangle\right |}{\beta(z,
\xi)} \right] < + \infty.\end{equation}

In what follows, the function
\[\Phi(x):=x\Psi'(x)\]
will play a central role.  By \eqref{admissible}, we have that both
$\Phi'(x)>0$ and $\Phi''(x)>0$, and it may be checked that
$\Phi'(|z|^2)$ coincides with the Laplacian of $\Psi(|z|^2)$ when
$n=1$ and in general is bounded below and above by positive
constants times this Laplacian for arbitrary $n>1$.

We are now prepared to state our main result.
\newtheorem*{thA}{\bf Theorem A}
\begin{thA}  Let $\Psi$ be a logarithmic growth function, and
suppose that there exists a real number $\eta<1/2$
such that
 \begin{equation}\label{basic}
\Phi''(t)=O(t^{-\frac{1}{2}}[\Phi'(t)]^{1+\eta}) \ \ \ \text{when} \
t\to\infty.
\end{equation}
If  $f$ is an entire function on $\cn$, then the following
statements are equivalent:
\begin{itemize}
\item[(i)]The function $f$ belongs to $\cT(\Psi)$ and the Hankel operator $H_{\bar f} $ on $\A$
is bounded; % from ${\mathcal A}^2(\Psi)$ into $L^2( \mu_\Psi)$;
\item[(ii)] The function $f$ belongs to $\bmoa(\Psi)$;
\item[(iii)] The function $f$ belongs to $\M$.
\end{itemize}
\end{thA}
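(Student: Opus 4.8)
The plan is to prove the cycle of implications (iii) $\Rightarrow$ (ii) $\Rightarrow$ (i) $\Rightarrow$ (iii), treating the two equivalences of function-space type separately from the operator-theoretic one. The geometric backbone of everything will be the precise Bergman kernel and Bergman metric estimates alluded to in the abstract; I will assume these have been established in a preceding section, so that $\beta(z,\xi)$ is comparable to $|\xi|\sqrt{\Phi'(|z|^2)}$ (up to the tangential/radial split that the radial weight forces), and $K_\Psi(z,w)$ has the expected off-diagonal Gaussian-type decay governed by the Bergman distance $\varrho$. The role of hypothesis \eqref{basic} is to guarantee the mild regularity of $\Phi'$ needed for these estimates — in particular that $\Phi'$ does not oscillate too wildly on Bergman balls, so that $\Phi'(|z|^2)\asymp\Phi'(|w|^2)$ whenever $\varrho(z,w)\lesssim 1$.

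For (iii) $\Rightarrow$ (ii): starting from $f\in\M$, I would write $(\mo f)(z)^2 = \widetilde{|f|^2}(z)-|\tilde f(z)|^2$ as a ``variance'' and express it as an integral against the normalized reproducing kernel, namely $(\mo f)(z)^2 = \int_{\cn} |f(w)-\tilde f(z)|^2\, |k_z(w)|^2\, d\mu_\Psi(w)$ where $k_z(\cdot)=K_\Psi(\cdot,z)/\sqrt{K_\Psi(z,z)}$; a standard manipulation replaces $\tilde f(z)$ by $f(z)$ at the cost of a constant. One then bounds $|f(w)-f(z)|$ along a Bergman geodesic by $\|f\|_{\M}\,\varrho(z,w)$, and the required finiteness reduces to the sub-mean-value / reproducing-kernel estimate $\int_{\cn}\varrho(z,w)^2|k_z(w)|^2\,d\mu_\Psi(w)\le C$, uniformly in $z$. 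This last integral is where the kernel estimates and the doubling behaviour of $\Phi'$ are used: off-diagonal decay of $|k_z|^2$ in $\varrho(z,w)$ dominates the polynomial growth of $\varrho(z,w)^2$.

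For (ii) $\Rightarrow$ (iii): here I would recover the pointwise size of $\nabla f$ from the smallness of the local oscillation. Using the reproducing property and Cauchy-type estimates adapted to the Bergman metric — i.e. estimating $\langle(\nabla f)(z),\bar\xi\rangle$ by differentiating $f$ against $k_z$ and comparing with $\int (f(w)-f(z))\,\partial_\xi \overline{k_z}(w)\,d\mu_\Psi(w)$ — one gets $|\langle(\nabla f)(z),\bar\xi\rangle|\lesssim \beta(z,\xi)\,(\mo f)(z)$, which is exactly $\|f\|_\M\lesssim\|f\|_{\bmo}$. For the operator equivalence, the identity $H_{\bar f}\varphi=(I-P_\Psi)(\bar f\varphi)$ gives, via a direct computation of $\|H_{\bar f}k_z\|^2$, that $\|H_{\bar f}k_z\|^2=(\mo f)(z)^2$ (using that $f$ is holomorphic, so $P_\Psi(\bar f\,\varphi)$ is handled cleanly); hence $\|f\|_{\bmo}=\sup_z\|H_{\bar f}k_z\|\le\|H_{\bar f}\|$, which yields (i) $\Rightarrow$ (ii). The reverse direction (ii) $\Rightarrow$ (i), i.e. actually bounding $\|H_{\bar f}\varphi\|_2$ for all $\varphi\in\A$ and not just on normalized kernels, is the step I expect to be the main obstacle: it requires decomposing $H_{\bar f}\varphi(z)=\int K_\Psi(z,w)\varphi(w)(\overline{f(z)}-\overline{f(w)})\,d\mu_\Psi(w)$, inserting the geodesic bound $|f(z)-f(w)|\le\|f\|_\M\varrho(z,w)$, and then controlling the resulting integral operator with kernel $|K_\Psi(z,w)|\,\varrho(z,w)$ by a Schur test against the weight $K_\Psi(z,z)^{-1/2}$ (or a power of it). Making the Schur test close demands the full strength of the Bergman kernel estimates — both the Gaussian off-diagonal decay and the on-diagonal asymptotics $K_\Psi(z,z)\asymp\Phi'(|z|^2)^n e^{\Psi(|z|^2)}$ — together with the comparability of $\Phi'$ on unit Bergman balls supplied by \eqref{basic}; this is the technical heart of the argument and the place where condition $\eta<1/2$ is genuinely used.
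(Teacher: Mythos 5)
Your proposal is correct and follows the paper's route in its essential steps: the identity $\|H_{\bar f}K_\Psi(\cdot,z)\|^2/K_\Psi(z,z)=(\mo f)(z)^2$ for the operator-to-$\bmoa$ direction, a B\'ekoll\'e--Berger--Coburn--Zhu-type differentiation-along-curves argument for $\bmoa\Rightarrow$\,Bloch, and a Schur test against the kernel $\varrho(z,w)\,|K_\Psi(z,w)|\,e^{-\frac12(\Psi(|z|^2)+\Psi(|w|^2))}$ for Bloch\,$\Rightarrow$\,bounded, matching Sections~2 and~5. Two small remarks. First, the paper closes the equivalence by the single cycle (i)$\Rightarrow$(ii)$\Rightarrow$(iii)$\Rightarrow$(i), so your separate direct verification of (iii)$\Rightarrow$(ii) via $\sup_z\int\varrho(z,w)^2|k_z(w)|^2\,d\mu_\Psi(w)<\infty$ is superfluous; note also that the step you label (ii)$\Rightarrow$(i) in fact starts from the geodesic bound $|f(z)-f(w)|\le\|f\|_{\M}\varrho(z,w)$, i.e.\ it is really (iii)$\Rightarrow$(i). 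Second, ``Gaussian-type'' off-diagonal decay is an overly optimistic picture of what the paper actually proves: Lemma~\ref{lemmah} gives only the cubic angular decay $|\theta|^{-3}$ in $\arg\langle z,w\rangle$ beyond the threshold $\theta_0(r)=[r\Phi'(r)]^{-1/2}$, with the radial decay supplied separately through the convexity of $Q_x(r)=\tfrac12(\Psi(r^2)+\Psi(x^2))-\Psi(xr)$; these weaker bounds are what must be fed into the Schur integral, and the split at $\theta_0$ is precisely where the hypothesis $\eta<1/2$ in \eqref{basic} is needed.
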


Note that the additional assumption \eqref{basic} is just a mild
smoothness condition, which holds whenever $\Psi$ is a nontrivial
polynomial or a reasonably well-behaved function of super-polynomial
growth.

As part of the proof of Theorem~A, we will perform a precise
computation of the asymptotic behavior of $\beta(z,\xi)$ when
$|z|\to\infty$. We state this result as a separate theorem.

\newtheorem*{thB}{\bf Theorem B}
\begin{thB}  Let $\Psi$ be a logarithmic growth function, and
suppose that there exists a real number $\eta<1/2$ such that
\eqref{basic} holds. Then we have, uniformly in $\xi$, that
\[ \beta^2(z,\xi)=(1+o(1))\left(|\xi|^2\Psi'(|z|^2)+|\langle
z,\xi\rangle|^2 \Psi''(|z|^2)\right) \ \ \ \text{when} \
|z|\to\infty.\]
\end{thB}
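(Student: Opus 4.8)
The plan is to compute the Bergman metric tensor $\partial^2\Lambda_\Psi/\partial z_j\partial\bar z_k$ directly from the explicit form of the reproducing kernel and then control the error terms using the smoothness hypothesis \eqref{basic}. Since $K_\Psi(z,w)=\frac{1}{(n-1)!}F_s^{(n-1)}(\langle z,w\rangle)$ depends only on $\langle z,w\rangle$, writing $t=|z|^2$ and $G(t):=\log\bigl(\frac{1}{(n-1)!}F_s^{(n-1)}(t)\bigr)$ we have $\Lambda_\Psi(z)=G(|z|^2)$, and a routine chain-rule computation gives
\[
\frac{\partial^2\Lambda_\Psi(z)}{\partial z_j\partial\bar z_k}
= G'(|z|^2)\,\delta_{jk} + G''(|z|^2)\,\bar z_j z_k,
\]
so that $\beta^2(z,\xi)=G'(|z|^2)|\xi|^2 + G''(|z|^2)|\langle z,\xi\rangle|^2$. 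Thus the theorem reduces to the one-variable asymptotics
\[
G'(t)=(1+o(1))\Psi'(t), \qquad G''(t)=(1+o(1))\Psi''(t)\qquad(t\to\infty),
\]
and the claim about uniformity in $\xi$ is then automatic since both identities are decompositions into the two fixed quadratic forms $|\xi|^2$ and $|\langle z,\xi\rangle|^2$ with nonnegative coefficients.

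Next I would establish the asymptotics of $F_s^{(n-1)}(t)$ and its logarithmic derivatives via a Laplace-type (saddle point) analysis of the moments. Writing $F_s(\zeta)=\sum_d \zeta^d/s_d$ with $s_d=\int_0^\infty x^d e^{-\Psi(x)}dx$, the point evaluation identity $K_\Psi(z,z)=\|K_\Psi(\cdot,z)\|^{-2}\cdot(\text{norm})$, or more directly the integral representation
\[
\frac{1}{(n-1)!}F_s^{(n-1)}(t) = K_\Psi(z,z)\ \text{with}\ |z|^2=t,
\]
together with the fact that $K_\Psi(z,z)^{-1}$ equals (up to a constant) the minimal $L^2(\mu_\Psi)$-norm squared of a normalized reproducing element, lets one relate $e^{-G(t)}=1/K_\Psi(z,z)$ to an integral of the form $\int e^{-\Psi(|w|^2)}$-type expressions concentrated near $|w|^2$ where $\Phi(s)=s\Psi'(s)$ takes the value $t$. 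The heuristic is the classical one: $K_\Psi(z,z)\asymp \Phi'(t)\,e^{\Psi(t)}$ up to lower-order factors, whence $G(t)=\Psi(t)+\log\Phi'(t)+O(1)$ and so $G'(t)=\Psi'(t)+(\log\Phi'(t))'$, $G''(t)=\Psi''(t)+(\log\Phi'(t))''$. Condition \eqref{basic}, i.e. $\Phi''(t)=O(t^{-1/2}[\Phi'(t)]^{1+\eta})$ with $\eta<1/2$, is exactly what guarantees that $(\log\Phi'(t))'=\Phi''(t)/\Phi'(t)=O(t^{-1/2}[\Phi'(t)]^{\eta})=o(\Psi'(t))$ and similarly $(\log\Phi'(t))''=o(\Psi''(t))$, using $\Phi'(t)=\Psi'(t)+t\Psi''(t)$ and the monotonicity in \eqref{admissible}; one also needs the error in the saddle-point estimate to be controlled uniformly, which is again where the bound on $\Phi''$ enters, since the width of the Laplace peak is governed by $\Phi'(t)^{-1}$ and its variation by $\Phi''$.

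The main obstacle is making the saddle-point asymptotics for $G'(t)$ and especially $G''(t)$ rigorous and uniform: one must differentiate the asymptotic relation for $\log K_\Psi(z,z)$ twice, and differentiating an $o(\cdot)$ estimate is not free. I would handle this by obtaining genuine two-sided bounds (not merely $o(1)$ statements) for $F_s^{(k)}(t)$ for $k=n-1,n,n+1$ — e.g. by the Cauchy-type integral $F_s^{(k)}(t)=\frac{k!}{2\pi i}\oint F_s(\zeta)(\zeta-t)^{-k-1}d\zeta$ over a circle of radius $\sim\sqrt{t/\Phi'(t)}$ chosen to match the Laplace scale, combined with the positivity of all Taylor coefficients of $F_s$ — and then express $G'=F_s^{(n)}/F_s^{(n-1)}$ and $G''$ in terms of these, so that the derivatives are ratios of quantities each having controlled asymptotics. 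The dimensional shift from $n-1$ to $n$ and $n+1$ derivatives changes $t$ only by $O(\log t)$ in the relevant saddle and hence does not affect the leading term; the hypothesis \eqref{basic} again ensures that $\Phi'$ varies slowly enough on that scale. Once these ratio asymptotics are in hand, substituting back into $\beta^2(z,\xi)=G'(|z|^2)|\xi|^2+G''(|z|^2)|\langle z,\xi\rangle|^2$ yields the stated formula, with the $o(1)$ uniform in $\xi$ because it is uniform in the two coefficient functions separately.
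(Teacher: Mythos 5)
Your reduction is the same as the paper's: write $\Lambda_\Psi(z)=G(|z|^2)$ with $G=\log k$, compute
\[
\beta^2(z,\xi)=G'(|z|^2)\,|\xi|^2+G''(|z|^2)\,|\langle z,\xi\rangle|^2,
\]
and then prove one-variable asymptotics for $G'$ and $G''$ via a Laplace/saddle-point analysis of the moment coefficients. This is exactly the structure of Section 4 of the paper (Lemma~\ref{lemmahderiv2} combined with Lemma~\ref{estsigma}, which in turn calls on the Laplace scheme from Section~3). So the overall route is not genuinely different.

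However, there is a gap in what you claim to reduce to. You state that the theorem follows once one proves $G'(t)=(1+o(1))\Psi'(t)$ \emph{and} $G''(t)=(1+o(1))\Psi''(t)$, remarking that uniformity in $\xi$ is ``automatic'' because the two coefficient functions converge separately. The second asymptotic is stronger than what is actually true under \eqref{basic}, and stronger than what the paper proves. Lemma~\ref{lemmahderiv2} only gives
\[
\Bigl(\tfrac{k'}{k}\Bigr)'(r)=(1+o(1))\Psi''(r)+o(1)\,\frac{\Psi'(r)}{r},
\]
and the extra term $o(\Psi'(r)/r)$ can dominate $\Psi''(r)$ (for example, if $\Psi$ is close to linear, $\Psi''(r)$ may be far smaller than $\Psi'(r)/r$). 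In that regime $G''(t)$ is \emph{not} $(1+o(1))\Psi''(t)$, and a termwise identification of coefficients fails. The way Theorem~B is nonetheless obtained is that the spurious term is fed back into the \emph{other} summand: since $|\langle z,\xi\rangle|^2\le |z|^2|\xi|^2$, one has
\[
o(1)\,\frac{\Psi'(|z|^2)}{|z|^2}\,|\langle z,\xi\rangle|^2 \;\le\; o(1)\,\Psi'(|z|^2)\,|\xi|^2,
\]
which is absorbed into the $|\xi|^2\Psi'(|z|^2)$ term. Your argument, as written, both asserts an asymptotic for $G''$ that you cannot establish and skips the Cauchy--Schwarz absorption that makes the weaker asymptotic sufficient. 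The uniformity in $\xi$ is precisely where this step is used; it is not automatic.

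A secondary, less serious issue: your proposed route to rigor, differentiating the heuristic $G(t)=\Psi(t)+\log\Phi'(t)+O(1)$ or using Cauchy integrals $\oint F_s(\zeta)(\zeta-t)^{-k-1}d\zeta$ on circles of radius $\sim\sqrt{t/\Phi'(t)}$, is only sketched and, as you yourself note, the first version fails because one cannot differentiate an $O(1)$ error. The paper avoids this by never estimating $\log K$ and differentiating; it works with the power-series representation of $k'/k$ and $(k'/k)'$ directly, reducing everything to moment sums $\sum_d c_d(d-\Phi(r))^m r^d$ for $m=0,1,2$ (Lemma~\ref{estsigma}) that are estimated by the scheme set up around \eqref{scheme} and \eqref{scheme2}, with the sign-cancelling $m=1$ case handled separately via condition (II'). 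If you want a self-contained argument you should formulate and prove a lemma of that type rather than lean on the $\log\Phi'$ heuristic.
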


We observe that for the classical Fock space ($\Psi$  a linear
function) we have $\Psi''(x)\equiv 0$, and so the ``directional''
term in $\beta(z,\xi)$ is not present. Note also that $\M$ contains
all polynomials and is infinite-dimensional whenever the growth of
$\Psi'(x)$ is super-polynomial. In the language of entire functions,
this means that ${\mathcal A}^2(\Psi) $ contains functions of
infinite order. When $n=1$, $\beta^2(z,\xi)$ can be replaced by
$\Phi'(|z|^2)|\xi|^2$. The same is also true when $\Psi$ is a
polynomial, because then $\Psi'$ and $\Phi'$ have the same
asymptotic behavior. In the latter case, our two theorems give the
following precise result: If $\Psi$ is a polynomial of degree $d$,
then $\M$ consists of all holomorphic polynomials of degree at most
$d$, cf. Theorem~A in \cite{BY1}.

The implication (i) $\Rightarrow$ (ii) in Theorem~A is standard; it
follows from general arguments for reproducing kernels. Likewise,
the implication (ii) $\Rightarrow$ (iii) can be established by a
well-known argument concerning the Bergman metric. Our proof of
Theorem~A (presented in sections 2--5 below) deals therefore mainly
with the implication (iii) $\Rightarrow$ (i). The crucial technical
ingredient in the proof of this result are certain estimates for the
Bergman kernel $K_\Psi(z,w)$. Such estimates have previously been
obtained by F. Holland and R. Rochberg in \cite{HR}. The results of
\cite{HR} are not directly applicable because we need more precise
off-diagonal estimates for the kernel than those given in that
paper. Our method of proof is similar to that of \cite{HR}, but our
approach highlights more explicitly the interplay between the
smoothness of $\Psi$ and the off-diagonal decay of the Bergman
kernel. This is where the additional smoothness condition
\eqref{basic} comes into play; many of our estimates can be
performed with sufficient precision without the assumption that
\eqref{basic} holds, but some condition of this kind seems to be
needed for our off-diagonal estimates.

The fact that the Bergman metric is the notion used to define the
Bloch space $\M$ suggests that Theorem~A should be extendable beyond
the case of radial weights. To obtain such an extension, one would
need a replacement of our Fourier-analytic approach, which relies
crucially on the representation of the Bergman kernel as a power
series.

The machinery developed to prove Theorem~A leads with little extra
effort to a characterization of compact Hankel operators in terms of
the obvious counterparts to $\vmoa$ and the little Bloch space; see
Section~\ref{compact} for details. In our study of Schatten class
Hankel operators, however, some additional techniques will be used.
We will need more precise local information about the Bergman
metric, namely that balls of fixed radius in the Bergman metric are
effectively certain ellipsoids in the Euclidean metric of $\cn$ (see
Section~7). These results appear to be of independent interest; in
particular, they lead to a characterization of Carleson measures and
in turn to a characterization of the spectral properties of Toeplitz
operators (see Section~8). Building on these results and using $L^2$
estimates for the $\overline{\partial}$ operator, we obtain in
Section~9 a characterization of Schatten class Hankel operators.

To place the present investigation in context, we close this
introduction with a few words on the literature. Boundedness and
compactness of Hankel operators with arbitrary symbols have
previously been considered only for the classical Fock space ($\Psi$
a linear function); see for example \cite{Ba1}, \cite{Ba2},
\cite{BC1}, \cite{BCZ}, \cite{St}, \cite{XZ}. The methods of these
papers, relying on the transitive self-action of the group $\cn$,
can not be extended beyond this special case. Hankel operators with
anti-holomorphic symbols defined on more general weighted Fock
spaces were studied recently in \cite{BY1} and \cite{BY2}, where it
was shown that anti-holomorphic polynomials do not automatically
induce bounded Hankel operators. For Bergman kernel estimates in
similar settings, we refer to \cite{Kr} and \cite{MO}. We finally
mention \cite{JPR} and \cite{BL}; the first of these papers focuses
on small Hankel operators and the Heisenberg group action, while the
second deals with Hankel operators for the Bergman projection on
smoothly bounded pseudoconvex domains in $\cn$.

A word on notation: Throughout this paper, the notation
$U(z)\lesssim V(z)$ (or equivalently $V(z)\gtrsim U(z)$) means that
there is a constant $C$ such that $U(z)\leq CV(z)$ holds for all $z$
in the set in question, which may be a space of functions or a set
of numbers. If both $U(z)\lesssim V(z)$ and $V(z)\lesssim U(z)$,
then we write $U(z)\simeq V(z)$.

\section{General arguments: ({\rm i}) $\Rightarrow$ ({\rm ii}) and ({\rm ii})
$\Rightarrow$ ({\rm iii}) in Theorem~A}

The following standard argument shows that (i) implies (ii) in
Theorem A. To begin with, we note that if $f$ is in ${\mathcal
A}^2(\psi)$, then $\tilde f = f.$ Moreover, by the definition of the
reproducing kernel, a computation shows that
\begin{equation}\label{general} \widetilde{|f|^2}(z) - |f(z)|^2 =
\int_{\cn} |f (\xi)- f(z)|^2 \frac{\left|
K_\Psi(\xi,z)\right|^2}{K_\Psi(z,z)} d\mu_\Psi(\xi)
  = \frac{\|H_{\bar f}  K_\Psi(\cdot,z)\|^2}{K_\Psi(z,z)}.
\end{equation} Hence, if $H_{\bar f} $ is bounded, then $\| f \|_{\bmo} <
+\infty.$

The implication (ii) $\Rightarrow$ (iii) is a consequence of the
following lemma, the proof of which is exactly as the proof of
Corollary 1 in \cite{BBCZ} (see pp. 319--321 in that paper).

\begin{lemma} \label{bergmanbloch}
Suppose that $f$ is in $\bmoa(\Psi)$. Then for every piecewise
$C^1$-smooth curve $\gamma:\ [0, 1] \to \cn$ we have
$$\left |\frac{d}{dt}(f \circ \gamma) (t) \right|
\leq 2 \sqrt{2} \beta(\gamma(t), \gamma'(t)) (\mo f)(\gamma(t)).$$
\end{lemma}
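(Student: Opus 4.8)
The plan is to reduce the inequality, via the chain rule, to a pointwise estimate on the holomorphic gradient of $f$, and then to prove that estimate by a reproducing-kernel argument built around the function $(f-f(z))$ times the normalized reproducing kernel at $z$.

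Since $f$ is entire, the chain rule gives $\frac{d}{dt}(f\circ\gamma)(t)=\sum_{j=1}^n\frac{\partial f}{\partial z_j}(\gamma(t))\,\gamma_j'(t)=\langle(\nabla f)(\gamma(t)),\overline{\gamma'(t)}\rangle$, so it suffices to prove
\[
\bigl|\langle(\nabla f)(z),\overline\xi\rangle\bigr|\le\beta(z,\xi)\,(\mo f)(z),\qquad z,\xi\in\cn .
\]
(This is in fact sharper than what is claimed; the constant $2\sqrt2$ leaves room to spare.) Fix $z$. The crucial point — and the step I expect to be the real content — is \emph{not} to estimate $f-f(z)$ directly, since although subtracting the constant $f(z)$ is harmless, $f-f(z)$ need not be small in the norm of $\A$. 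Instead I would introduce
\[
g_z(w):=\bigl(f(w)-f(z)\bigr)\,\frac{K_\Psi(w,z)}{\sqrt{K_\Psi(z,z)}},
\]
which is holomorphic in $w$. By \eqref{general} it lies in $\A$, with $\|g_z\|^2=\widetilde{|f|^2}(z)-|f(z)|^2=(\mo f)(z)^2$ (using $\widetilde{|f|^2}(z)<\infty$ and $\tilde f=f$, which hold for $f\in\bmoa(\Psi)$). Moreover $g_z(z)=0$, so $g_z$ is orthogonal to $K_\Psi(\cdot,z)$, and the product rule gives $\langle(\nabla g_z)(z),\overline\xi\rangle=\sqrt{K_\Psi(z,z)}\,\langle(\nabla f)(z),\overline\xi\rangle$.

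Next I would differentiate the reproducing identity $h(w)=\langle h,K_\Psi(\cdot,w)\rangle$ under the integral sign, obtaining for every $h\in\A$ the formula $\langle(\nabla h)(z),\overline\xi\rangle=\langle h,\ell_{z,\xi}\rangle$, where $\ell_{z,\xi}:=\sum_{j}\overline{\xi_j}\,\partial K_\Psi(\cdot,z)/\partial\bar z_j$. Applying this with $h=g_z$ and using $g_z\perp K_\Psi(\cdot,z)$, one may replace $\ell_{z,\xi}$ by its orthogonal projection $\ell_{z,\xi}'$ onto the orthogonal complement of $K_\Psi(\cdot,z)$. A direct computation of $\beta^2(z,\xi)$ from $\Lambda_\Psi(z)=\log K_\Psi(z,z)$ — differentiating $K_\Psi(z,z)=\langle K_\Psi(\cdot,z),K_\Psi(\cdot,z)\rangle$ and using that $K_\Psi(\cdot,z)$ is anti-holomorphic in $z$ — shows that
\[
\beta^2(z,\xi)=\frac{\|\ell_{z,\xi}\|^2}{K_\Psi(z,z)}-\frac{\bigl|\langle\ell_{z,\xi},K_\Psi(\cdot,z)\rangle\bigr|^2}{K_\Psi(z,z)^2}=\frac{\|\ell_{z,\xi}'\|^2}{K_\Psi(z,z)} .
\]
Combining these facts with Cauchy--Schwarz gives
\[
\sqrt{K_\Psi(z,z)}\,\bigl|\langle(\nabla f)(z),\overline\xi\rangle\bigr|=\bigl|\langle g_z,\ell_{z,\xi}'\rangle\bigr|\le\|g_z\|\,\|\ell_{z,\xi}'\|=(\mo f)(z)\,\beta(z,\xi)\sqrt{K_\Psi(z,z)},
\]
and dividing by $\sqrt{K_\Psi(z,z)}$ yields the pointwise bound, hence the lemma.

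The one delicate point is the passage to $g_z$: multiplying $f-f(z)$ by the normalized reproducing kernel converts the Berezin quantity $(\mo f)(z)$ into an honest $\A$-norm while leaving the gradient at $z$ unchanged up to the factor $\sqrt{K_\Psi(z,z)}$. Everything after that — the derivative reproducing formula, the identification of $\|\ell_{z,\xi}'\|^2$ with $\beta^2(z,\xi)K_\Psi(z,z)$, and the closing Cauchy--Schwarz — is routine, and amounts to the argument of Corollary 1 of the cited reference carried over to the present setting.
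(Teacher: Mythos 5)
Your argument is correct and is precisely the reproducing-kernel computation that \cite{BBCZ}, Corollary~1 uses; since the paper's own ``proof'' of this lemma is simply a citation to those pages, your write-out is a faithful reconstruction of the intended argument (with the minor bonus that you obtain the sharp constant $1$ rather than $2\sqrt2$, which a priori makes the stated lemma weaker but still correct). One point worth making explicit: you need $g_z$ to lie in $\A$, not merely in $L^2(\mu_\Psi)\cap\{\text{holomorphic}\}$, for the inner products $\langle g_z,\ell_{z,\xi}\rangle$ to be legitimate; this holds here because the weight is radial, so every holomorphic $L^2$ function is the $L^2$-limit of its Taylor polynomials and hence belongs to the closure of polynomials, but it is the kind of hypothesis one should flag rather than silently use.
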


If we choose $\gamma(t) = z+ t\xi$, then we obtain
\begin{equation}\label{bmoatobloch}\frac{|\langle (\nabla f ) (z), \overline{\xi} \rangle|}{  \beta(z,
\xi)}\leq 2 \sqrt{2}  (\mo f)(z)\end{equation} for all $z$ in $\cn $
and $\xi$ in $\cn \setminus \{0\}.$

\section{Estimates for the Bergman kernel and some related functions}

This section is a somewhat elaborate preparation for the proof of
Theorem~B and also the proof of the implication (iii) $\Rightarrow$
(i) in Theorem A.

Set
$$\theta_0(r) := [r\Phi'(r)]^{-1/2}.$$
The key estimates for the Bergman kernel are the following.
\begin{lemma}\label{lemmah}
Suppose that \eqref{basic} holds. Let $z$ and $w$ be arbitrary
points in $\cn$ such that $\langle z,w\rangle \neq 0$, and write
$\langle z,w\rangle=re^{i\theta}$, where $r> 0$ and $-\pi <\theta\le
\pi$. Then we have
\[ \frac{1}{[\Psi'(r)]^{n-1}} \frac{|K_\Psi(z,w)|}{e^{\Psi(r)}}\lesssim \begin{cases} \Phi'(r), &
|\theta|\le  \theta_0(r)  \\
r^{-3/2}[\Phi'(r)]^{-1/2}|\theta|^{-3}, & |\theta|>
\theta_0(r).\end{cases}\] Moreover, there exists a positive constant
$c$ such that if $\theta<c\theta_0(r)$, then
\[ |K_\Psi(z,w)|\gtrsim
\Phi'(r)[\Psi'(r)]^{n-1}e^{\Psi(r)}.\]
\end{lemma}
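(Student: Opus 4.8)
The plan is to analyze the reproducing kernel through its power series representation. Since $K_\Psi(z,w) = \frac{1}{(n-1)!}F_s^{(n-1)}(\langle z,w\rangle)$ and $F_s(\zeta) = \sum_{d\ge 0} \zeta^d/s_d$ with $s_d = \int_0^\infty x^d e^{-\Psi(x)}\,dx$, everything reduces to understanding the entire function $F_s$ and its derivatives on the circle $|\zeta|=r$, i.e. to estimating $\sum_{d} \frac{d(d-1)\cdots(d-n+2)}{s_d} r^{d-n+1} e^{i(d-n+1)\theta}$. The first step is to obtain sharp asymptotics for the moments $s_d$ via Laplace's method: the integrand $x^d e^{-\Psi(x)} = e^{d\log x - \Psi(x)}$ is maximized at the point $x_d$ solving $d/x_d = \Psi'(x_d)$, i.e. $\Phi(x_d) = d$; expanding to second order gives $s_d \simeq x_d^d e^{-\Psi(x_d)}\sqrt{2\pi/\Phi'(x_d)}\cdot x_d^{-1/2}$ up to the usual Gaussian factor, where the width of the peak is governed by $\Phi'(x_d)/x_d$. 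Here the hypothesis \eqref{basic} enters: it controls $\Phi''$ so that the quadratic approximation to $\log$-integrand is uniformly valid on a scale large compared to the peak width, which is exactly what makes the Laplace asymptotics (and their uniformity in $d$) legitimate.

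The second step is to insert these asymptotics into the kernel series. Writing $a_d := d(d-1)\cdots(d-n+2) r^{d-n+1}/s_d$, one sees that, as a function of the continuous variable $d$, $\log a_d$ is concave with a maximum near the value $d^*$ determined by $\Phi(r) \approx d^*$ (since the dominant balance is $r^d/x_d^d e^{\Psi(x_d)}$, maximized when $x_d = r$). Summing $a_d$ over $d$ then behaves like a Gaussian sum: the effective number of contributing terms is of order $\sqrt{d^*}$-ish, more precisely of order $(r\Phi'(r))^{1/2} = \theta_0(r)^{-1}$, and the peak value of $a_d$ is of order $\Phi'(r)[\Psi'(r)]^{n-1}e^{\Psi(r)}$. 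This immediately gives the diagonal-type bound $|K_\Psi(z,w)| \lesssim \Phi'(r)[\Psi'(r)]^{n-1}e^{\Psi(r)}$ valid for all $\theta$, and matching it with the lower bound when $|\theta| < c\theta_0(r)$: in that angular range the phases $e^{i(d-n+1)\theta}$ over the $\simeq \theta_0(r)^{-1}$ dominant indices are all within a bounded angle of each other, so no cancellation occurs and the sum is comparable to its largest term. This handles both the case $|\theta|\le\theta_0(r)$ of the upper bound and the lower bound.

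The third step — the genuinely delicate one — is the off-diagonal decay $|K_\Psi(z,w)| \lesssim r^{-3/2}[\Phi'(r)]^{-1/2}|\theta|^{-3}[\Psi'(r)]^{n-1}e^{\Psi(r)}$ for $|\theta| > \theta_0(r)$. Here one cannot just bound by the largest term; one must exploit oscillation. The natural approach is summation by parts applied to $\sum_d a_d e^{i(d-n+1)\theta}$, using that the Abel/Dirichlet kernel $\sum_{d\le N} e^{id\theta}$ is $O(|\theta|^{-1})$ and that the coefficient sequence $a_d$ has controlled variation. To extract the full $|\theta|^{-3}$ one needs to summon this twice (or estimate second differences), since $a_d$ is smooth (its first and second differences in $d$ are smaller than $a_d$ itself by factors related to the peak width $\theta_0(r)^2 \simeq (r\Phi'(r))^{-1}$). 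The bookkeeping must be done carefully: one splits the sum near the peak $d^*$ from the tails, controls $\sum |a_{d+1} - a_d|$ and $\sum|a_{d+2} - 2a_{d+1} + a_d|$ using the Laplace asymptotics, and tracks how these interact with the $1/|\theta|$ and $1/|\theta|^2$ gained from partial summation against the Dirichlet kernel. Throughout, \eqref{basic} is what guarantees the monotonicity/variation estimates on $a_d$ hold with the needed uniformity; without it the second difference of $a_d$ could be too large and the $|\theta|^{-3}$ decay would degrade. I expect this oscillatory estimate, and in particular verifying that exactly three powers of $|\theta|^{-1}$ (no more, no fewer) come out after accounting for the coefficient variation, to be the main obstacle; the moment asymptotics and the on-diagonal bounds are comparatively routine once the Laplace method is set up correctly.
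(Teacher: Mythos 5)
Your overall strategy mirrors the paper's: start from the power-series form of the kernel, use Laplace's method to get sharp asymptotics for the moments $s_d$ (with \eqref{basic} ensuring the Gaussian approximation is uniformly valid near the peak), and then exploit oscillation for the off-diagonal decay. Where you diverge is at the key technical step. The paper interpolates the coefficient sequence by a $C^3$ function $\Omega(t)$ and applies the Poisson summation formula, reducing the sum to $\sum_j \widetilde{\Omega}(j)$ with $\widetilde{\Omega}(j)=\int\Omega(t)e^{i(2\pi j+\theta)t}\,dt$; three integrations by parts then give $|\widetilde{\Omega}(j)|\le|\theta+2\pi j|^{-3}\|\Omega'''\|_1$, and $|\widetilde{\Omega}(0)|\le\min(\|\Omega\|_1,|\theta|^{-3}\|\Omega'''\|_1)$ yields the diagonal bound, the off-diagonal bound, and (after a short additional computation of $\widetilde{\Omega}(0)$) the lower bound, all from one identity. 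You instead propose iterated discrete summation by parts against the Dirichlet kernel. That can be made to work, since $\sum_d|\Delta^k a_d|$ is comparable to $\|\Omega^{(k)}\|_1$ for the unimodal sequences in question, but the bookkeeping is more scattered: the diagonal case, the off-diagonal case, and the lower bound would each need a separate argument rather than falling out of a single formula.

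There is, however, a concrete gap in your plan for the off-diagonal decay: you say one should "summon [summation by parts] twice (or estimate second differences)," and later explicitly list $\sum|a_{d+1}-a_d|$ and $\sum|a_{d+2}-2a_{d+1}+a_d|$ as the quantities to control. Two summations by parts against a kernel that is $O(|\theta|^{-1})$ can give at most $|\theta|^{-2}$, not $|\theta|^{-3}$; to reach the claimed rate you must perform three summations by parts and control $\sum_d|\Delta^3 a_d|$. This is precisely why the paper builds a $C^3$ interpolant $\Omega$ and estimates $\|\Omega'''\|_1$, and it is also where the hypothesis \eqref{basic} does its sharpest work: the estimates for $I'''$ and $L_r'''$ (Lemmas \ref{I-deriv} and \ref{L-deriv}) genuinely need the bound on $\Phi''$, beyond what is required for the local Gaussian approximation alone. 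Your closing remark about "exactly three powers of $|\theta|^{-1}$" shows you know where the target is, but as written the plan stops one order of smoothness short.
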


We collect a few preliminary results.

\begin{lemma}\label{smooth}
Let $\eta$ be as in Theorem~A. Then, for any fixed $\alpha>\eta$, we
have
\[ \sup_{|\tau|\le t^{1/2} [\Phi'(t)]^{-\alpha}} \Phi'(t+\tau)=(1+o(1))\Phi'(t)\]
when $t\to\infty$.
\end{lemma}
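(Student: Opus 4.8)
The plan is to control the oscillation of $\Phi'$ over an interval of length roughly $t^{1/2}[\Phi'(t)]^{-\alpha}$ around $t$ by integrating the derivative bound \eqref{basic}. Since $\Phi''\ge 0$ by \eqref{admissible}, the function $\Phi'$ is increasing, so the supremum over $|\tau|\le t^{1/2}[\Phi'(t)]^{-\alpha}$ is attained at $\tau = t^{1/2}[\Phi'(t)]^{-\alpha}$; it therefore suffices to bound $\Phi'(t+h)-\Phi'(t)$ from above, where $h:=t^{1/2}[\Phi'(t)]^{-\alpha}$, and show this difference is $o(\Phi'(t))$.

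First I would write
\[
\Phi'(t+h)-\Phi'(t)=\int_t^{t+h}\Phi''(s)\,ds.
\]
By \eqref{basic} there is a constant $C$ and a threshold $t_0$ such that $\Phi''(s)\le C s^{-1/2}[\Phi'(s)]^{1+\eta}$ for $s\ge t_0$. For $t$ large (so that $t\ge t_0$ and, as I will arrange below, $t+h\le 2t$), on the interval $[t,t+h]$ one has $s^{-1/2}\le t^{-1/2}$. The only remaining issue is to bound $[\Phi'(s)]^{1+\eta}$ for $s\in[t,t+h]$; a priori $\Phi'$ could have grown substantially across this interval, so I would run a short bootstrap/continuity argument: let $t+h^\ast$ be the largest point $\le t+h$ up to which $\Phi'(s)\le 2\Phi'(t)$ holds. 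On $[t,t+h^\ast]$ the integrand is at most $C t^{-1/2}2^{1+\eta}[\Phi'(t)]^{1+\eta}$, so
\[
\Phi'(t+h^\ast)-\Phi'(t)\le C\,2^{1+\eta}\,t^{-1/2}[\Phi'(t)]^{1+\eta}\,h
= C\,2^{1+\eta}[\Phi'(t)]^{1+\eta-\alpha}.
\]
Since $\alpha>\eta$, we have $1+\eta-\alpha<1$, so $[\Phi'(t)]^{1+\eta-\alpha}=o(\Phi'(t))$ as $t\to\infty$ (using $\Phi'(t)\to\infty$, which follows because $\Phi(t)=t\Psi'(t)\to\infty$ forces $\Phi'$ to be unbounded given $\Phi''>0$; if $\Phi'$ were bounded the statement is trivial anyway). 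Hence for $t$ large the right-hand side is strictly less than $\Phi'(t)$, which forces $h^\ast=h$ (otherwise maximality is contradicted by continuity). Therefore the bound holds on all of $[t,t+h]$, giving $\Phi'(t+h)-\Phi'(t)=O([\Phi'(t)]^{1+\eta-\alpha})=o(\Phi'(t))$, i.e. $\sup_{|\tau|\le h}\Phi'(t+\tau)=\Phi'(t+h)=(1+o(1))\Phi'(t)$.

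One routine point to check is that $t+h\le 2t$ for $t$ large, i.e. $h=t^{1/2}[\Phi'(t)]^{-\alpha}\le t$, equivalently $[\Phi'(t)]^{\alpha}\ge t^{-1/2}$; this is clear for large $t$ since $\Phi'(t)\to\infty$. I expect the main (and only real) obstacle to be the bootstrap step controlling $\Phi'$ on the interval before the oscillation bound is available — i.e. justifying that $\Phi'$ cannot double across $[t,t+h]$; everything else is a direct estimate. The lower bound $\Phi'(t+\tau)\ge\Phi'(t-h)\ge(1-o(1))\Phi'(t)$ for negative $\tau$ follows by the same computation applied on $[t-h,t]$ together with monotonicity, so the two-sided conclusion follows.
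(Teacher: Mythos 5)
Your proof is correct, but it takes a genuinely different route from the paper's. The paper exploits the observation that \eqref{basic} is precisely the statement that the function $x\mapsto[\Phi'(x)]^{-\eta}$ has derivative $-\eta[\Phi'(x)]^{-1-\eta}\Phi''(x)=O(x^{-1/2})$, a bound that is uniform and does not involve $\Phi'$ at all; integrating over $[t,t+\tau]$ (where $x\simeq t$) then gives $[\Phi'(t+\tau)]^{-\eta}=[\Phi'(t)]^{-\eta}+O(t^{-1/2}|\tau|)=[\Phi'(t)]^{-\eta}+O([\Phi'(t)]^{-\alpha})=(1+o(1))[\Phi'(t)]^{-\eta}$, and the claim follows by raising to the power $-1/\eta$ (implicitly one takes $\eta>0$ without loss of generality, since increasing $\eta$ only weakens \eqref{basic}). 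The point of this substitution is that it linearizes the differential inequality, so no a priori control on $\Phi'$ over the interval is needed. You instead integrate $\Phi''$ directly against the bound $\Phi''(s)\lesssim s^{-1/2}[\Phi'(s)]^{1+\eta}$; since $[\Phi'(s)]^{1+\eta}$ is not a priori controlled on $[t,t+h]$, you are forced to run a bootstrap (your $h^*$ argument) to rule out $\Phi'$ doubling before the estimate becomes available. Your bootstrap is carried out correctly, including the continuity step forcing $h^*=h$, the treatment of the left half $[t-h,t]$ (which in fact needs no bootstrap since $\Phi'$ is nondecreasing there), and the dispensation of the degenerate case where $\Phi'$ is bounded. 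What the paper's substitution buys is exactly the avoidance of this bootstrap: the derivative bound on $(\Phi')^{-\eta}$ holds unconditionally, so one integrates once and is done. Both arguments are valid; yours is a bit more hands-on, the paper's a bit slicker.
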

\begin{proof}
The proof is similar to the proof of Lemma 6 in \cite{HR}. By
\eqref{basic}, $[\Phi'(x)]^{-1-\eta}\Phi''(x)=O(x^{-1/2})$ when
$x\to\infty$, which implies that
\[ |[\Phi'(t+\tau)]^{-\eta}-[\Phi'(t)]^{-\eta}|=|\tau| O(t^{-1/2}\tau)\]
when $t\to\infty$. The result follows from this relation.
\end{proof}

In order to estimate $|K_\Psi(z,w)|$, we need precise information
about the moments $s_d$. To this end, note that the integrand of
\[\int_{0}^\infty x^t e^{-\Psi(x)} dx \] attains its maximum at
$x=\Phi^{-1}(t)$. Set
\[ h_t(x)=-t\log x +\Psi(x)-(-t\log {\Phi^{-1}}(t)+\Psi(\Phi^{-1}(t)))\]
and
\[ I(t)=\int_0^\infty e^{-h_t(x)}dx; \]
we may then write
\[ s_d=e^{d\log \Phi^{-1}(d)-\Psi(\Phi^{-1}(d))}I(d).\]
We have the following precise estimate for $I(t)$.
\begin{lemma}\label{I-est}
For the function $I(t)$, we have
\[
I(t)=(\sqrt{2\pi}+o(1))\left[\frac{\Phi^{-1}(t)}{\Phi'(\Phi^{-1}(t))}\right]^{1/2}
\]
when $t\to\infty$.
\end{lemma}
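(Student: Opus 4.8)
The plan is to evaluate $I(t) = \int_0^\infty e^{-h_t(x)}\,dx$ by the Laplace (saddle-point) method, expanding $h_t$ around its minimum at $x_t := \Phi^{-1}(t)$. A direct computation gives $h_t(x_t) = 0$ and $h_t'(x_t) = -t/x_t + \Psi'(x_t) = 0$ by definition of $x_t$; the second derivative is $h_t''(x) = t/x^2 + \Psi''(x)$, so at the saddle point
\[ h_t''(x_t) = \frac{t}{x_t^2} + \Psi''(x_t) = \frac{\Psi'(x_t)}{x_t} + \Psi''(x_t) = \frac{\Phi'(x_t)}{x_t}, \]
where the last equality uses $\Phi(x) = x\Psi'(x)$, so $\Phi'(x) = \Psi'(x) + x\Psi''(x)$. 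Thus the formal Gaussian approximation predicts
\[ I(t) \approx \sqrt{\frac{2\pi}{h_t''(x_t)}} = \sqrt{2\pi}\left[\frac{x_t}{\Phi'(x_t)}\right]^{1/2} = \sqrt{2\pi}\left[\frac{\Phi^{-1}(t)}{\Phi'(\Phi^{-1}(t))}\right]^{1/2}, \]
which is exactly the claimed asymptotic. The work is to turn this heuristic into a rigorous $(1+o(1))$ statement.

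\textbf{Main steps.} First I would introduce the natural length scale $\delta_t := x_t^{1/2}[\Phi'(x_t)]^{-\alpha}$ for a fixed $\alpha$ with $\eta < \alpha < 1/2$, and split the integral into the \emph{central region} $|x - x_t| \le \delta_t$ and the \emph{tails} $|x - x_t| > \delta_t$. On the central region, Taylor expansion gives $h_t(x) = \tfrac12 h_t''(x_t)(x-x_t)^2 + \tfrac16 h_t'''(\xi)(x-x_t)^3$ for some $\xi$ between $x$ and $x_t$. One computes $h_t'''(x) = -2t/x^3 + \Psi'''(x)$, and needs to control this on $|\xi - x_t| \le \delta_t$. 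Using Lemma~\ref{smooth} (applied to $\Phi'$, and noting $\Psi'''(x) \le \Phi''(x)/x$ type bounds that follow from $\Phi'' = 2\Psi'' + x\Psi'''$) together with \eqref{basic}, one shows $|h_t'''(\xi)| \lesssim \Phi'(x_t)/x_t^2$ uniformly on the central region, and hence the cubic error satisfies $|h_t'''(\xi)||x-x_t|^3 \lesssim (\Phi'(x_t)/x_t^2)\,\delta_t^3 = \delta_t \cdot h_t''(x_t) \cdot [\Phi'(x_t)]^{1/2-\alpha} \cdot (\text{something} \to 0)$; more carefully, $h_t''(x_t)\delta_t^2 = [\Phi'(x_t)]^{1-2\alpha} \to \infty$ (so the Gaussian is essentially complete on the central region) while the cubic correction relative to the quadratic term is $O(\delta_t h_t'''/h_t'') = O([\Phi'(x_t)]^{-\alpha - (1 - \alpha)})\to 0$ — a substitution $x = x_t + h_t''(x_t)^{-1/2} u$ then turns the central integral into $h_t''(x_t)^{-1/2}\int_{|u|\le \text{large}} e^{-u^2/2}(1+o(1))\,du = (\sqrt{2\pi} + o(1))h_t''(x_t)^{-1/2}$.

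\textbf{Second step: the tails.} I would show the tail contribution is negligible compared to $h_t''(x_t)^{-1/2} = [x_t/\Phi'(x_t)]^{1/2}$. The convexity facts $\Psi'(x) > 0$, $\Psi''(x) \ge 0$ make $h_t$ convex on $(0,\infty)$, so $h_t$ is increasing on $[x_t, \infty)$ and decreasing on $(0, x_t]$, and in particular $h_t(x) \ge h_t(x_t \pm \delta_t)$ outside the central region. A lower bound $h_t(x_t + \delta_t) \gtrsim h_t''(x_t)\delta_t^2 = [\Phi'(x_t)]^{1-2\alpha}$ (from the quadratic Taylor term, the cubic being lower order as above) shows $e^{-h_t} \le e^{-c[\Phi'(x_t)]^{1-2\alpha}}$ at the edge; then using convexity to get at least linear growth of $h_t$ beyond the edge (the derivative $h_t'(x_t+\delta_t) \gtrsim h_t''(x_t)\delta_t$ is bounded below), one bounds $\int_{x_t+\delta_t}^\infty e^{-h_t} \le e^{-h_t(x_t+\delta_t)}/h_t'(x_t+\delta_t)$, which is superpolynomially smaller than $[x_t/\Phi'(x_t)]^{1/2}$ since $\Phi'(t)\to\infty$. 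A symmetric (and slightly more delicate, because of the $-t\log x$ singularity as $x\to 0^+$) argument handles $\int_0^{x_t - \delta_t}$; here one uses that on $(0, x_t]$ we still have $h_t(x) \ge h_t(x_t - \delta_t)$ and that $\int_0^\infty e^{-h_t(x)}\,dx$ was already known to be finite (indeed $= s_d/e^{d\log\Phi^{-1}(d) - \Psi(\Phi^{-1}(d))}$ when $t = d$), so the left tail is at most $(x_t - \delta_t)e^{-h_t(x_t - \delta_t)}$ up to a harmless factor, again superpolynomially small. Combining central and tail estimates gives $I(t) = (\sqrt{2\pi} + o(1))[x_t/\Phi'(x_t)]^{1/2}$, as claimed.

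\textbf{The main obstacle} will be the uniform control of $h_t'''$ on the central region, i.e.\ showing that $\Psi'''(x)$ and $t/x^3$ do not conspire to make the cubic term comparable to the quadratic one. This is precisely the role of the smoothness hypothesis \eqref{basic}: it pins down $\Phi''$ (hence, via $\Phi'' = 2\Psi'' + x\Psi'''$, both $\Psi''$ and $\Psi'''$) relative to $\Phi'$, and Lemma~\ref{smooth} packages the consequence that $\Phi'$ is essentially constant over intervals of length $\delta_t$. The choice $\eta < \alpha < 1/2$ is exactly what makes $h_t''(x_t)\delta_t^2 = [\Phi'(x_t)]^{1-2\alpha}\to\infty$ (Gaussian fully captured) while simultaneously the relative cubic error $\to 0$; verifying that these two requirements are compatible, and that all $o(1)$'s are uniform, is where the care is needed. (A secondary nuisance is the behavior near $x = 0$, handled by the crude finiteness bound above rather than by fine asymptotics.)
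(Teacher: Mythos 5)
Your proposal is a standard Laplace-method argument built around the same ingredients as the paper's proof: the saddle point $x_t=\Phi^{-1}(t)$, the cutoff $\delta_t=x_t^{1/2}[\Phi'(x_t)]^{-\alpha}$ with $\eta<\alpha<1/2$, Lemma~\ref{smooth} for near-constancy of $\Phi'$, and convexity of $h_t$ for the tails. The one place where the paper's argument and yours diverge is technically small but worth noting. The paper never invokes $h_t'''$; instead it uses the explicit identity $h_t''(x)=\Phi'(x)/x+[\Phi(x_t)-\Phi(x)]/x^2$ together with Lemma~\ref{smooth} to show $h_t''(x)=(1+o(1))h_t''(x_t)$ uniformly for $|x-x_t|\le\delta_t$; integrating twice then gives $h_t(x)=\tfrac12(c+o(1))(x-x_t)^2$ directly, with no cubic remainder to control. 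Your Taylor-remainder route is fine in principle, but your intermediate claim $|h_t'''(\xi)|\lesssim\Phi'(x_t)/x_t^2$ is not justified: from $\Phi''=2\Psi''+x\Psi'''\ge x\Psi'''$ and \eqref{basic} one only gets $\Psi'''(x)=O\bigl(x^{-3/2}[\Phi'(x)]^{1+\eta}\bigr)$, which exceeds $\Phi'(x)/x^2$ by a factor $x^{1/2}[\Phi'(x)]^{\eta}$ that need not be bounded. The conclusion survives the correction, because with the true bound the cubic-to-quadratic ratio at the edge of the central region is
\[
\frac{|h_t'''|\,\delta_t}{h_t''}\;\lesssim\;\frac{x_t^{-3/2}[\Phi'(x_t)]^{1+\eta}\cdot x_t^{1/2}[\Phi'(x_t)]^{-\alpha}}{\Phi'(x_t)/x_t}\;=\;[\Phi'(x_t)]^{\eta-\alpha}\;\longrightarrow\;0
\]
precisely because $\alpha>\eta$, and similarly the $-2t/x^3$ piece contributes a ratio $\lesssim x_t^{-1/2}[\Phi'(x_t)]^{-\alpha}\to 0$. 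So your scheme works once the $h_t'''$ bound is fixed, but the paper's route through the second-derivative identity is cleaner and sidesteps the issue entirely.
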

\begin{proof}
Set $\tau(x)=\sqrt{x}[\Phi'(x)]^{-\alpha}$, where $\eta<\alpha<1/2$.
Since
\[ h_t''(x)=\frac{\Phi'(x)}{x}+\frac{t}{x^2}-\frac{\Phi(x)}{x^2}
=\frac{\Phi'(x)}{x}+\frac{1}{x^2}\big[\Phi(\Phi^{-1}(t))-\Phi(x)\big],\]
we have, by Lemma~\ref{smooth}, \[
h_t''(x)=h_t''(\Phi^{-1}(t))(1+o(1))\] when $|x-\Phi^{-1}(t)|\le
\tau(\Phi^{-1}(t))$.  On the other hand, by the convexity of $h_t$,
we then have
\[ |h_t(x)|\ge
\frac{1}{2}(h_t''(\Phi^{-1}(t))+o(1))\tau(\Phi^{-1}(t))|x-\Phi^{-1}(t)|\]
for $|x-\Phi^{-1}(t)|\ge \tau(\Phi^{-1}(t))$. Setting for simplicity
\[ c=h_t''(\Phi^{-1}(t))=\frac{\Phi'(\Phi^{-1}(t))}{\Phi^{-1}(t)}, \] we then get
\begin{equation}\label{gauss} I(t)=\int_{|x|\le \tau(\Phi^{-1}(t))}
e^{-\frac{1}{2}(c+o(1))x^2}dx+E(t),\end{equation} where
\[ |E(t)|\le 2\int_{x\ge \tau(\Phi^{-1}(t))}
e^{-\frac{1}{2}(c+o(1)){ \tau(\Phi^{-1}(t}))x}dx
%=\frac{4[\Phi^{-1}(t)]^{1/2}}{ [\Phi'(\Phi^{-1}(t))]^{1-\alpha}}
%e^{-(\frac{1}{2}+o(1))[\Phi'(t)]^{1-2\alpha}}
.\] Thus the result
follows, since the integral in \eqref{gauss} can be estimated by the
corresponding Gaussian integral from $-\infty$ to $\infty$.
\end{proof}

In what follows, we will estimate a number of integrals in a similar
fashion, using Lemma~\ref{smooth} to split the domain of
integration. The integrands will be of the type $e^{-g_t(x)}S_t(x)$
and satisfy the following:
\begin{itemize}
\item[(I)] $g_t$ attains its minimum at a point $x_0=x_0(t)\to \infty$ with $g_t''(x)=(1+o(1))
c$ for $|x-x_0|\le \tau$ and $1/\tau=o(c)$ when $t\to\infty$.
\item[(II)] For $|x-x_0|\le \tau$, $S_t(x)$ can be estimated by a constant $C$ times $|x-x_0|^m$
for some positive integer $m$.
\item[(III)] When $|x-x_0|\ge \tau$ and $|x-x_0|$ grows, the function $e^{-g_t(x)} S_t(x)$
decays so fast that
\[ \int_{0}^\infty e^{-g_t(x)}|S_t(x)| dx=(1+o(1))\int_{|x-x_0|\le \tau}
e^{-g_t(x)}|S_t(x)|
dx. \]
\end{itemize}
Taking into account the formula \begin{equation} \label{formula}
\int_0^\infty x^m e^{-\frac{1}{2}cx^2} dx =
(c/2)^{-(m+1)/2}\int_0^\infty x^me^{-x^2} dx, \end{equation} we then
arrive at the estimate
\begin{equation}\label{scheme}
\int_0^\infty e^{-h_t(x)} S_t(x) dx =O(C c^{-(m+1)/2})
\end{equation}
when $t\to \infty$.

We will at one point encounter a slightly different variant of this
scheme, obtained by replacing (II) by the following:
\begin{itemize}
\item[(II')] For $|x-x_0|\le \tau$, we have $S(x)=(1+o(1))(x-x_0)$ when
$t\to\infty$.
\end{itemize}
In this case, because of the symmetry around the point $x_0$, we get
the slightly better estimate
\begin{equation}\label{scheme2}
\int_0^\infty e^{-h_t(x)} S(x) dx =o(c^{-1})
\end{equation}
when $t\to\infty$.

To avoid tedious repetitions, we will in what follows omit most of
the details of such calculus arguments. We will briefly state that
conditions (I), (II), (III) (or respectively (I), (II'), (III)) are
satisfied and conclude that this leads to the estimate
\eqref{scheme} (or respectively \eqref{scheme2}).

In the proof of the next lemma, we will use this scheme three times.
\begin{lemma}\label{I-deriv}
We have
\begin{align*}
I'(t) & =
O\big([\Phi^{-1}(t)\Phi'(\Phi^{-1}(t))]^{-1/2}I(t) \big); \\
I''(t)&=
O\big([\Phi^{-1}(t)\Phi'(\Phi^{-1}(t))]^{-1}I(t) \big); \\
I'''(t) & =
O\big(\big[\Phi^{-1}(t)\Phi'(\Phi^{-1}(t))\big]^{-3/2}I(t)\big)
\end{align*}
when $t\to\infty$.
\end{lemma}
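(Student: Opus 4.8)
The plan is to differentiate $I(t)=\int_0^\infty e^{-h_t(x)}\,dx$ under the integral sign and then apply the calculus scheme (I)--(III) from the paragraphs preceding the lemma. Since $h_t(x)=-t\log x+\Psi(x)-\big(-t\log\Phi^{-1}(t)+\Psi(\Phi^{-1}(t))\big)$, one has $\partial_t h_t(x)=-\log x+\log\Phi^{-1}(t)=\log\big(\Phi^{-1}(t)/x\big)$, using that the $t$-derivative of the normalizing constant $-t\log\Phi^{-1}(t)+\Psi(\Phi^{-1}(t))$ is exactly $-\log\Phi^{-1}(t)$ (the other terms cancel because $\Phi^{-1}(t)$ is the critical point of $x\mapsto -t\log x+\Psi(x)$). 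Therefore
\[
I'(t)=-\int_0^\infty \log\!\Big(\frac{\Phi^{-1}(t)}{x}\Big)\,e^{-h_t(x)}\,dx,
\]
and differentiating once more picks up an extra factor $-\log(\Phi^{-1}(t)/x)$ together with a term $\frac{d}{dt}\log\Phi^{-1}(t)=\frac{1}{\Phi^{-1}(t)\,\Phi'(\Phi^{-1}(t))}\cdot\Phi^{-1}(t)=\big(\Phi^{-1}(t)\big)^{-1}\cdot\big(\Phi'(\Phi^{-1}(t))\big)^{-1}\cdot\Phi^{-1}(t)$; more precisely $\big(\Phi^{-1}\big)'(t)=1/\Phi'(\Phi^{-1}(t))$, so $\frac{d}{dt}\log\Phi^{-1}(t)=1/\big(\Phi^{-1}(t)\Phi'(\Phi^{-1}(t))\big)=c/\Phi^{-1}(t)^2$ where $c=h_t''(\Phi^{-1}(t))=\Phi'(\Phi^{-1}(t))/\Phi^{-1}(t)$ as in the proof of Lemma~\ref{I-est}. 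Iterating, $I''(t)$ and $I'''(t)$ are finite linear combinations of integrals of the form $\int_0^\infty \big[\log(\Phi^{-1}(t)/x)\big]^{k}e^{-h_t(x)}\,dx$ (with $k\le 3$) multiplied by powers of the small quantity $c/\Phi^{-1}(t)^2$; the dominant contributions come from the pure powers of $\log(\Phi^{-1}(t)/x)$.

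The main point is thus to estimate $J_k(t):=\int_0^\infty \big|\log(\Phi^{-1}(t)/x)\big|^{k}e^{-h_t(x)}\,dx$ for $k=1,2,3$. Here I set $x_0=\Phi^{-1}(t)$ and $\tau=\tau(x_0)=\sqrt{x_0}\,[\Phi'(x_0)]^{-\alpha}$ with $\eta<\alpha<1/2$, exactly as in Lemma~\ref{I-est}, so that conditions (I) and (III) hold with $g_t=h_t$ and this choice of $x_0,\tau$ (condition (III) is where the decay of $e^{-h_t}$ away from $x_0$, already established in the proof of Lemma~\ref{I-est}, is reused). For condition (II), on $|x-x_0|\le\tau$ a first-order Taylor expansion of the logarithm gives
\[
\Big|\log\frac{\Phi^{-1}(t)}{x}\Big|=\Big|\log\Big(1+\frac{x_0-x}{x}\Big)\Big|\le \frac{2}{x_0}\,|x-x_0|
\]
for $t$ large (since $|x-x_0|\le\tau=o(x_0)$), so $S_t(x):=\big|\log(\Phi^{-1}(t)/x)\big|^{k}$ is bounded by $C x_0^{-k}|x-x_0|^{k}$; that is, (II) holds with $m=k$ and $C\simeq x_0^{-k}$. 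The scheme \eqref{scheme} then yields
\[
J_k(t)=O\big(x_0^{-k}\,c^{-(k+1)/2}\big)=O\big(x_0^{-k}\,[x_0\Phi'(x_0)]^{(k+1)/2}\cdot x_0^{-(k+1)}\big)\cdot(\cdots),
\]
but it is cleaner to divide through by $I(t)\simeq c^{-1/2}$ (Lemma~\ref{I-est}): $J_k(t)/I(t)=O\big(x_0^{-k} c^{-k/2}\big)=O\big((c/x_0^2)^{k/2}\big)=O\big([\Phi^{-1}(t)\Phi'(\Phi^{-1}(t))]^{-k/2}\big)$, since $c/x_0^2=\Phi'(x_0)/x_0^3$ and $(c/x_0^2)^{1/2}=[\Phi'(x_0)]^{1/2}x_0^{-3/2}$ — wait, one should instead note $x_0^{-1}c^{-1/2}=x_0^{-1}[x_0/\Phi'(x_0)]^{1/2}=[x_0\Phi'(x_0)]^{-1/2}$, which is the quantity appearing in the statement. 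Hence $J_k(t)/I(t)=O\big([\Phi^{-1}(t)\Phi'(\Phi^{-1}(t))]^{-k/2}\big)$ for $k=1,2,3$, and feeding this back into the formulas for $I'(t)$, $I''(t)$, $I'''(t)$ — noting that each extra factor $c/x_0^2=[x_0\Phi'(x_0)]^{-1}\cdot\Phi'(x_0)^2/\Phi'(x_0)$, i.e. $c/x_0^2$, is comparable to $[\Phi^{-1}(t)\Phi'(\Phi^{-1}(t))]^{-1}$ times a bounded factor and so does not dominate — gives the three claimed bounds.

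The step I expect to be the main obstacle is the careful bookkeeping in the second and third derivatives: one must check that all the ``mixed'' terms arising from differentiating the prefactor $\log\Phi^{-1}(t)$ (which produce lower powers of $\log(\Phi^{-1}(t)/x)$ but extra factors of $c/x_0^2$) are indeed of the same order as or smaller than the leading term $J_k(t)$ with $k$ equal to the order of the derivative, so that nothing of larger order than $[\Phi^{-1}(t)\Phi'(\Phi^{-1}(t))]^{-k/2}I(t)$ survives. This is a routine but slightly delicate comparison of magnitudes; everything else (differentiation under the integral, justified by the exponential decay in (III), and the application of the scheme) is standard and parallels Lemma~\ref{I-est}. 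Also, for $I'''(t)$ one should verify that the variant \eqref{scheme2} is not needed here — the relevant integrands $|\log(\Phi^{-1}(t)/x)|^k e^{-h_t(x)}$ are nonnegative and even in $x-x_0$ to leading order, so the plain scheme \eqref{scheme} with $m=k$ suffices and gives the stated $O$-bounds.
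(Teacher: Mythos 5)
Your approach is essentially the paper's: differentiate under the integral, use the cancellation $\partial_t h_t(x)=\log(\Phi^{-1}(t)/x)$ that comes from $\Phi^{-1}(t)$ being the critical point, bound the $k$-th power of the logarithm by $C\,x_0^{-k}|x-x_0|^k$ on $|x-x_0|\le\tau$, and then feed this into the scheme \eqref{scheme} with $m=k$, dividing by $I(t)\simeq c^{-1/2}$ at the end. That part is correct and matches the paper. (Minor slip: you write $\tfrac{d}{dt}\log\Phi^{-1}(t)=c/\Phi^{-1}(t)^2$, but with $c=\Phi'(x_0)/x_0$ the correct identity is $\tfrac{d}{dt}\log\Phi^{-1}(t)=\dfrac{1}{x_0\Phi'(x_0)}=\dfrac{1}{c\,x_0^{2}}$, not $c/x_0^2$; you clearly mean the right quantity since you also display $1/(\Phi^{-1}(t)\Phi'(\Phi^{-1}(t)))$ and match it to the statement, but as written $c/x_0^2$ is $\Phi'(x_0)/x_0^3$, which is a different and generally larger quantity.)

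The genuine gap is in the bookkeeping for $I'''(t)$. You claim the extra non-logarithmic contributions are just "powers of the small quantity" $\tfrac{d}{dt}\log\Phi^{-1}(t)$, but the third derivative produces the term
\[
\Big[\frac{(\Phi^{-1})'(t)}{\Phi^{-1}(t)}\Big]'=\Big[\frac{1}{\Phi^{-1}(t)\Phi'(\Phi^{-1}(t))}\Big]'
=-\frac{\Phi''(\Phi^{-1}(t))}{[\Phi'(\Phi^{-1}(t))]^{3}\Phi^{-1}(t)}-\frac{1}{[\Phi^{-1}(t)\Phi'(\Phi^{-1}(t))]^{2}},
\]
and the first summand is not a power of $[\Phi^{-1}(t)\Phi'(\Phi^{-1}(t))]^{-1}$ at all: it involves $\Phi''$, and bounding it by $[\Phi^{-1}(t)\Phi'(\Phi^{-1}(t))]^{-3/2}$ is exactly where the hypothesis \eqref{basic} (with $\eta<1/2$) is used. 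Your proposal never brings in \eqref{basic}, and without it this term is not controlled, so the third estimate does not follow from your argument as written. One also should note, as the paper does, the separate treatment of the region $1\le x<e^{-1}\Phi^{-1}(t)$ where the bound $|\log(x/\Phi^{-1}(t))|\lesssim|x-x_0|/x_0$ fails; this is a small point, but it is part of verifying condition (III) rigorously.
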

\begin{proof}
We begin by noting that $I'$ can be computed in the following
painless way:
\begin{equation} I'(t)=\int_0^\infty \log\frac{x}{\Phi^{-1}(t)}\
e^{-h_t(x)} dx \label{firstder};\end{equation} this holds because
$h_t'(\Phi^{-1}(t))=0$. For the same reason, we get
\begin{equation} I''(t)=\int_0^\infty \Big[-\frac{(\Phi^{(-1)})'(t)}{\Phi^{-1}(t)}
+\Big(\log\frac{x}{\Phi^{-1}(t)}\Big)^2\Big] e^{-h_t(x)} dx
\label{secondder}\end{equation} and
\begin{equation}\label{thirdder}
I'''(t)=\int_0^\infty
\Big[-\big[\frac{(\Phi^{(-1)})'(t)}{\Phi^{-1}(t)}\big]'-
3\frac{(\Phi^{(-1)})'(t)}{\Phi^{-1}(t)}\log\frac{x}{\Phi^{-1}(t)}
+\Big(\log\frac{x}{\Phi^{-1}(t)}\Big)^3\Big] e^{-h_t(x)} dx.
\end{equation}
We use that $[\Phi^{-1}]'(t)=1/\Phi'(\Phi^{-1}(t)$, and then in
\eqref{thirdder} we also use the fact that
\begin{equation}\label{thirdtrick}
 \Big[\frac{1}{\Phi'(\Phi^{(-1)}(t))\Phi^{-1}(t)}\Big]'=
-\frac{\Phi''(\Phi^{-1}(t))}{[\Phi'(\Phi^{-1}(t))]^3\Phi^{-1}(t)} -
\frac{1}{[\Phi'(\Phi^{-1}(t))\Phi^{-1}(t)]^2};\end{equation} we
apply condition \eqref{basic} to the first term on the right-hand
side. When we estimate the integrals in \eqref{firstder},
\eqref{secondder}, and \eqref{thirdder}, we use that
\[\big|\log\frac{x}{\Phi^{-1}(t)}\big|\le e
\frac{|x-\Phi^{-1}(t)|}{\Phi^{-1}(t)}\] for $x\ge e^{-1}
\Phi^{-1}(t)$ and that, say,
\[ \big|\log\frac{x}{\Phi^{-1}(t)}\big|\le \log\frac{1}{\Phi^{-1}(t)}\]
when $1\le x < e^{-1} \Phi^{-1}(t)$. In each case, the integrand
satisfies conditions (I), (II), (III) with $g_t=h_t$, so that we may
use \eqref{scheme}. The desired results for $I'$, $I''$, $I'''$ now
follow from \eqref{scheme}.
\end{proof}

We will need similar estimates for the function
\[ L_r(t)=\exp\big(t\log
r-t\log\Phi^{-1}(t)+\Psi\big(\Phi^{-1}(t)\big)\big),\] where $r$ is
a positive parameter.
\begin{lemma}\label{L-deriv}
We have
\begin{align*}
L_r'(t) & =
\big(-\log\frac{\Phi^{-1}(t)}{r}\big) L_r(t); \\
L_r''(t)&=  \left[\big(\log\frac{\Phi^{-1}(t)}{r}\big)^2-
\frac{1}{\Phi'(\Phi^{-1}(t))\Phi^{-1}(t)}\right] L_r(t); \\
L_r'''(t) & =  \left[\big(-\log\frac{\Phi^{-1}(t)}{r}\big)^3+
\frac{3\log\frac{\Phi^{-1}(t)}{r}}{\Phi'(\Phi^{-1}(t))\Phi^{-1}(t)}
+O\big(\big[\Phi'(\Phi^{-1}(t))\Phi^{-1}(t)\big]^{-3/2}\big)\right]
L_r(t)
\end{align*}
when $t\to\infty$.
\end{lemma}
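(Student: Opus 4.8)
The plan is to write $L_r(t)=e^{u_r(t)}$, where $u_r(t)=t\log r-t\log\Phi^{-1}(t)+\Psi(\Phi^{-1}(t))$, and to compute the first three derivatives of $u_r$; the three displayed formulas then follow at once from the identities $L_r'=u_r'L_r$, $L_r''=(u_r''+(u_r')^2)L_r$, and $L_r'''=(u_r'''+3u_r'u_r''+(u_r')^3)L_r$.

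First I would differentiate $u_r$ once. The only term requiring care is $\frac{d}{dt}\Psi(\Phi^{-1}(t))=\Psi'(\Phi^{-1}(t))(\Phi^{-1})'(t)$; here one uses that $\Phi(x)=x\Psi'(x)$ forces $\Psi'(\Phi^{-1}(t))=t/\Phi^{-1}(t)$, so that this term exactly cancels the contribution $-t(\Phi^{-1})'(t)/\Phi^{-1}(t)$ coming from differentiating $-t\log\Phi^{-1}(t)$. What survives is $u_r'(t)=\log r-\log\Phi^{-1}(t)=-\log\frac{\Phi^{-1}(t)}{r}$, which is the first formula. Differentiating once more and using $(\Phi^{-1})'(t)=1/\Phi'(\Phi^{-1}(t))$ gives $u_r''(t)=-\big[\Phi'(\Phi^{-1}(t))\Phi^{-1}(t)\big]^{-1}$, and combining this with $L_r''=(u_r''+(u_r')^2)L_r$ yields the second formula.

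For the third derivative I would differentiate $u_r''$ using precisely the identity \eqref{thirdtrick}, which expresses $\frac{d}{dt}\big[\Phi'(\Phi^{-1}(t))\Phi^{-1}(t)\big]^{-1}$ as the sum of a $\Phi''$-term and the term $-\big[\Phi'(\Phi^{-1}(t))\Phi^{-1}(t)\big]^{-2}$. Hence $u_r'''(t)=\frac{\Phi''(\Phi^{-1}(t))}{[\Phi'(\Phi^{-1}(t))]^3\Phi^{-1}(t)}+\big[\Phi'(\Phi^{-1}(t))\Phi^{-1}(t)\big]^{-2}$, and assembling $L_r'''=(u_r'''+3u_r'u_r''+(u_r')^3)L_r$ produces the two leading terms $\big(-\log\frac{\Phi^{-1}(t)}{r}\big)^3$ and $3\log\frac{\Phi^{-1}(t)}{r}\big[\Phi'(\Phi^{-1}(t))\Phi^{-1}(t)\big]^{-1}$, together with the error term $u_r'''(t)$.

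It then remains only to verify that $u_r'''(t)=O\big([\Phi'(\Phi^{-1}(t))\Phi^{-1}(t)]^{-3/2}\big)$, and this is where the smoothness hypothesis \eqref{basic} enters (and the only point beyond routine differentiation). Since \eqref{admissible} gives $\Phi'(x)\ge\Phi(x)/x$, we have $\Phi'(\Phi^{-1}(t))\Phi^{-1}(t)\ge t\to\infty$, so $\big[\Phi'(\Phi^{-1}(t))\Phi^{-1}(t)\big]^{-2}=O\big([\Phi'(\Phi^{-1}(t))\Phi^{-1}(t)]^{-3/2}\big)$. For the $\Phi''$-term, \eqref{basic} gives $\Phi''(\Phi^{-1}(t))=O\big([\Phi^{-1}(t)]^{-1/2}[\Phi'(\Phi^{-1}(t))]^{1+\eta}\big)$, so that term is $O\big([\Phi^{-1}(t)]^{-3/2}[\Phi'(\Phi^{-1}(t))]^{\eta-2}\big)$; since $\eta<1/2$ and $\Phi'$ is positive and increasing (hence bounded below), $[\Phi'(\Phi^{-1}(t))]^{\eta-2}=O\big([\Phi'(\Phi^{-1}(t))]^{-3/2}\big)$, and the bound follows. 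No genuine obstacle is expected: the lemma is essentially a bookkeeping computation, with \eqref{basic} needed solely to absorb the single $\Phi''$ term in $u_r'''$ into the error.
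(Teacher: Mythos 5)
Your proposal is correct and follows essentially the same route as the paper's terse proof: direct differentiation for the first two formulas (with the key observation that $\Psi'(\Phi^{-1}(t))=t/\Phi^{-1}(t)$ produces the cancellation giving $u_r'=-\log\frac{\Phi^{-1}(t)}{r}$), then for the third derivative the identity \eqref{thirdtrick} plus condition \eqref{basic} to absorb the $\Phi''$-term; your additional observations that $\Phi'(\Phi^{-1}(t))\Phi^{-1}(t)\ge t$ and that $\Phi'$ is bounded below correctly justify the $O$-estimate.
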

\begin{proof}
The first and the second of these formulas are obtained by direct
computation. We arrive at the estimate for the third derivative by
again using \eqref{thirdtrick} and then applying condition
\eqref{basic}.
\end{proof}

\begin{proof}[Proof of Lemma~\ref{lemmah}]
We begin by recalling that
$$K_{\Psi}(z,w) =  k(\langle z,w\rangle), $$
where $$k(\zeta):=\frac{1}{(n-1)!} \sum_{d=n-1}^{\infty}\frac{d(d-1)
\cdots (d-n+2)}{s_d} \zeta^{d-n+1}.$$  We set $\langle z,w\rangle=r
e^{i\theta}$ and assume that $r>0 $ and $|\theta|\le \pi$. We may
then write
$$\frac{\langle z, w\rangle^{d}}{s_d} = \frac{L_r(d)}{I(d)}\exp(id\theta)$$
and hence
$$\aligned  \langle z, w\rangle^{n-1}K_{\Psi}(z,w) & =r^{n-1}\exp(i(n-1)\theta)k\big(r
e^{i\theta}\big)
\\
& = \frac{1}{(n-1)!}\sum_{d=n-1}^{\infty}d(d-1) \cdots (d-n+2)
\frac{L_r(d)}{I(d)}\exp(id\theta).
\endaligned$$

Let $\Omega(t)$ be a function in $C^3({\mathbb R})$ so that
$$\Omega(t)=  \frac{1}{(n-1)!}\frac{t(t-1) \cdots (t-n+2)L_r(t)}{I(t)}$$ for $t\ge n-1$ and $\Omega(t)=0$ for $t\le
n-2$. Then the Poisson summation formula gives \[r^{n-1}\exp(i(n-1)\theta)k\big(r
e^{i\theta}\big)=\sum_{j=-\infty}^\infty \widetilde{\Omega}(j),\]
where
\[ \widetilde{\Omega}(j)=\int_{-\infty}^\infty \Omega(t)e^{i (2\pi j+\theta)t}
dt.\] Integrating by parts, we obtain
\[ r^{n-1}|k\big(r
e^{i\theta}\big)|\le |\widetilde{\Omega}(0)|+\|\Omega'''\|_1
\sum_{j=1}^\infty\frac{2}{(2\pi)^3(j-1/2)^3}.\] Since
\[|\widetilde{\Omega}(0)|\le
\min\big(\|\Omega\|_1,|\theta|^{-3}\|\Omega'''\|_1\big),\] the proof
of the first part of the lemma is complete if we can prove that
\begin{equation}\label{L1-0} \| \Omega\|_1\lesssim(\Phi(r))^{n-1} \Phi'(r)e^{\Psi(r)} \end{equation}
and
\begin{equation} \label{L2-3} \|\Omega'''\|_1\lesssim (\Phi(r))^{n-1}
\frac{e^{\Psi(r)}}{r^{3/2}\sqrt{\Phi'(r)}}.
\end{equation}

We first estimate $\|\Omega\|_1$. We write $L_r(t)=\exp(-g_r(t))$
and claim that conditions (I), (II), (III) above hold. To see this,
we observe that, by the first formula of Lemma~\ref{L-deriv}, $L_r$
attains its maximum at $t=\Phi(r)$. Moreover, $g_r$ is a convex
function and
\[ g''_r(t)=\frac{1}{\Phi'(\Phi^{-1}(t))\Phi^{-1}(t)}.\]
Lemma~\ref{smooth} implies that \[ g''_r(t)=(1+o(1))g''_r(\Phi(r))\]
when $|t-\Phi(r)|\le \sqrt{r}[\Phi'(r)]^{1-2\alpha}$. The remaining
details are carried out as in the proof of Lemma~\ref{I-est}. Using
\eqref{scheme} with $m=0$ and Lemma~\ref{I-est}, we therefore get
\begin{align*}  \|\Omega\|_1 & = \left|\Phi(r)(\Phi(r)-1) \cdots
(\Phi(r)-n+2)
\right|\frac{L_r(\Phi(r))}{I(\Phi(r))}(\sqrt{2\pi}+o(1))[\Phi'(r)
r]^{1/2} \\
& =(1+o(1))(\Phi(r))^{n-1}\Phi'(r)e^{\Psi(r)},\end{align*}  which
shows that \eqref{L1-0} holds.

To arrive at \eqref{L2-3}, we need a pointwise estimate for
$\Omega'''$. To simplify the writing, we set
\[ a=\big|\log\frac{\Phi^{-1}(t)}{r}\big| \ \ \ \text{and}
\ \ \ b=\big[\Phi'(\Phi^{-1}(t))\Phi^{-1}(t)\big]^{-1/2}.\] Then
using the Leibniz rule along with Lemma~\ref{I-deriv} and
Lemma~\ref{L-deriv}, we get
\[ |\Omega'''(t)|\lesssim (a^3+a^2b+ab^2+b^3)\Omega(t).\]
By a straightforward calculus argument, we verify that each of the
terms in this expression satisfies (I), (II), and (III) above, again
with $x_0=\Phi(r)$ $\tau=\sqrt{r}[\Phi'(r)]^{1-2\alpha}$. We now use
\eqref{scheme} to achieve the desired estimate for each of the terms
$a^m b^{3-m}\Omega(t)$.

The previous proof also gives the second estimate when $\theta=0$,
because then $\widetilde{\Omega}(0)=\|\Omega\|_1$. To prove it in
general, we need to check that $k(r)\simeq |k(re^{i\theta})|$ when
$|\theta|\le c [r \Phi'(r)]^{-1/2}$. To this end, note that
\[
\widetilde{\Omega}(0)=e^{i\theta\Phi(r)}\int_{-\infty}^\infty
\Omega(t)e^{i\theta(t-\Phi(r))} dt,\] which implies that
\[ |\widetilde{\Omega}(0)|\ge \|\Omega\|_1-\int_{-\infty}^\infty
\Omega(t)|\theta||t-\Phi(r)| dt.\] The integral on the right is
computed using \eqref{scheme} with $m=1$, and so we get
\[ |\widetilde{\Omega}(0)|\ge
\|\Omega\|_1\big(1-C|\theta|[r\Phi'(r)]^{1/2}\big).\] Thus the
second estimate in Lemma~\ref{lemmah} holds for $c$ sufficiently
small.
\end{proof}

We close this section by proving some estimates for another function
that will be important later. Set
\begin{equation} \label{Qdef} Q_x(r)=\frac{1}{2}(\Psi(r^2)+\Psi(x^2))-\Psi(xr).
\end{equation}
\begin{lemma}\label{LemQ}
Let $\alpha$ be a positive number such that $\eta<\alpha<1/2$,  let
$x_1$ and $x_2$ be the two points such that $x_1<x<x_2$ and
\[ |x-x_1|=|x-x_2|=[\Phi'(x)]^{-\alpha},\] and set $c=\Psi'(0)$. When $r\to\infty$, we have
\begin{align}
\label{middle} Q_x''(r) & =(1+o(1))\Phi'(x^2), \ \ \ x_1\le r \le
x_2; \\
\label{small} Q_x(r) & \ge \frac{c}{4}
(x-r)^2+\big(\frac{1}{4}+o(1)\big)[\Phi'(x^2)]^{1-2\alpha}, \ \ \
r<x_1;\\ \label{large} Q_x(r)& \ge \frac{c}{4}
(x-r)^2+\big(\frac{1}{4}+o(1)\big)[\Phi'(r^2)]^{1-2\alpha}, \ \ \
r>x_2.
\end{align}
\end{lemma}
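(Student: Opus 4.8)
The plan rests on one computational identity. Since $\Phi(t)=t\Psi'(t)$, we have $\Phi'(t)=\Psi'(t)+t\Psi''(t)$, and therefore
\[ \frac{\partial^2}{\partial s\,\partial w}\,\Psi(sw)=\Psi'(sw)+sw\,\Psi''(sw)=\Phi'(sw). \]
Applying the fundamental theorem of calculus in each variable yields the representation
\[ Q_x(r)=\frac12\int_r^x\!\int_r^x\Phi'(sw)\,ds\,dw, \]
valid for all $x,r>0$ (the two sign changes cancel when $r>x$), while differentiating $Q_x$ directly gives $Q_x'(r)=\frac1r\bigl(\Phi(r^2)-\Phi(xr)\bigr)$ and hence
\[ Q_x''(r)=2\Phi'(r^2)-\frac{x}{r}\,\Phi'(xr)-\frac1{r^2}\bigl(\Phi(r^2)-\Phi(xr)\bigr). \]
I shall use throughout that $\Phi'$ is nondecreasing by \eqref{admissible}, so that $\Phi'(t)\ge\Phi'(0)=\Psi'(0)=c$ for every $t\ge0$, and that the hypothesis \eqref{basic} enters only through Lemma~\ref{smooth}, which controls $\Phi'$ on intervals centred at $x^2$ of half-length comparable to $x[\Phi'(x^2)]^{-\alpha}$.

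To prove \eqref{middle}, I fix $r$ with $|r-x|\le[\Phi'(x^2)]^{-\alpha}$; then $r^2$ and $xr$ both lie within $O\bigl(x[\Phi'(x^2)]^{-\alpha}\bigr)$ of $x^2$, so Lemma~\ref{smooth} gives $\Phi'(r^2)=(1+o(1))\Phi'(x^2)$ and $\Phi'(xr)=(1+o(1))\Phi'(x^2)$ uniformly, while $x/r=1+o(1)$ and
\[ \frac1{r^2}\bigl|\Phi(r^2)-\Phi(xr)\bigr|\le\frac{|r-x|}{r}\,\sup\Phi'\le\frac{[\Phi'(x^2)]^{-\alpha}}{r}\,(1+o(1))\Phi'(x^2)=o\bigl(\Phi'(x^2)\bigr), \]
the supremum being taken over the interval between $xr$ and $r^2$. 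Substituting into the formula for $Q_x''$ gives $Q_x''(r)=(2-1+o(1))\Phi'(x^2)=(1+o(1))\Phi'(x^2)$ uniformly for $x_1\le r\le x_2$, which is \eqref{middle}.

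For \eqref{small} and \eqref{large} I would use the double-integral representation, whose integrand is positive, together with the crude bound $\Phi'\ge c$. Take $r<x_1<x$, so $Q_x(r)=\frac12\iint_{[r,x]^2}\Phi'(sw)\,ds\,dw$, and split $[r,x]^2$ into the corner square $[x_1,x]^2$ and its complement. On the corner, $sw$ lies in $[x_1^2,x^2]$, an interval about $x^2$ of length at most $2x[\Phi'(x^2)]^{-\alpha}$, so Lemma~\ref{smooth} gives $\Phi'(sw)=(1+o(1))\Phi'(x^2)$; since the corner has area $(x-x_1)^2=[\Phi'(x^2)]^{-2\alpha}$, this part contributes at least $(\tfrac12+o(1))[\Phi'(x^2)]^{1-2\alpha}$. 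On the complement, of area $(x-r)^2-[\Phi'(x^2)]^{-2\alpha}$, the bound $\Phi'\ge c$ alone gives at least $\tfrac c2(x-r)^2-\tfrac c2[\Phi'(x^2)]^{-2\alpha}$. Writing $\tfrac c2(x-r)^2=\tfrac c4(x-r)^2+\tfrac c4(x-r)^2$ and using $x-r>x-x_1=[\Phi'(x^2)]^{-\alpha}$ to bound the second copy below by $\tfrac c4[\Phi'(x^2)]^{-2\alpha}$, and then using $\tfrac c4[\Phi'(x^2)]^{-2\alpha}\le\tfrac14[\Phi'(x^2)]^{1-2\alpha}$ (true because $c\le\Phi'(x^2)$), I collect everything to obtain
\[ Q_x(r)\ge\frac c4(x-r)^2+(\tfrac12+o(1))[\Phi'(x^2)]^{1-2\alpha}-\tfrac14[\Phi'(x^2)]^{1-2\alpha}=\frac c4(x-r)^2+(\tfrac14+o(1))[\Phi'(x^2)]^{1-2\alpha}, \]
which is \eqref{small}. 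The estimate \eqref{large} is proved identically: for $r>x_2$ one writes $Q_x(r)=\frac12\iint_{[x,r]^2}\Phi'(sw)\,ds\,dw$ and splits off the corner square $[r_1,r]^2$ with $r_1:=r-[\Phi'(r^2)]^{-\alpha}$, which lies inside $[x,r]^2$ because $r-x>x_2-x=[\Phi'(x^2)]^{-\alpha}\ge[\Phi'(r^2)]^{-\alpha}$ (as $\Phi'$ is nondecreasing and $r>x$); on the corner $\Phi'(sw)=(1+o(1))\Phi'(r^2)$, on the complement $sw\ge x^2$ so $\Phi'(sw)\ge\Phi'(x^2)\ge c$, and the same bookkeeping with $\Phi'(r^2)$ in place of $\Phi'(x^2)$ delivers \eqref{large}.

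There is no deep obstacle here: the substantive part is the careful bookkeeping in the previous paragraph. The one point that needs a word is the use of Lemma~\ref{smooth} on intervals a bounded multiple longer than its stated range (half-length $x[\Phi'(x^2)]^{-\alpha}$ about $x^2$, respectively $r[\Phi'(r^2)]^{-\alpha}$ about $r^2$); this is harmless, since one may replace $\alpha$ by any $\alpha'$ with $\eta<\alpha'<\alpha$ to absorb the constant factor, or simply observe from the proof of Lemma~\ref{smooth} that the constant in front of $t^{1/2}[\Phi'(t)]^{-\alpha}$ plays no role. Finally, every $o(1)$ above originates from the error term in Lemma~\ref{smooth} and is therefore uniform in the required sense: in \eqref{middle} and \eqref{small} uniformly in $r$ as $x\to\infty$, and in \eqref{large} uniformly in $x$ as $r\to\infty$.
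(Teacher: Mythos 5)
Your proof is correct, but the route is genuinely different from the paper's. The paper works with the one--variable Taylor expansion of $Q_x$ about its minimum at $r=x$: the bound $\tfrac c4(x-r)^2$ comes from a term $\tfrac12\int_r^x\Psi'(s^2)(x-s)\,ds$, the bound $(\tfrac14+o(1))[\Phi'(x^2)]^{1-2\alpha}$ from $\tfrac12\int_{x_1}^x\int_{x_1}^t Q_x''(u)\,du\,dt$ via \eqref{middle}, and for $r>x_2$ one additionally invokes the monotonicity of $\Psi''$ to get $Q_x''(u)\ge\Phi'(u^2)$ for $u\ge x$ before running the analogous estimate centred at $r$. Your identity $Q_x(r)=\tfrac12\iint_{[r,x]^2}\Phi'(sw)\,ds\,dw$ replaces $Q_x''$ (whose comparison with $\Phi'$ is slightly delicate and asymmetric about $x$) by the manifestly nonnegative, monotone integrand $\Phi'(sw)$, so both regimes $r<x_1$ and $r>x_2$ are handled by one symmetric decomposition of the square into a Lemma~\ref{smooth} corner and a complement where the crude floor $\Phi'\ge c$ suffices; the extra input $\Psi''$ nondecreasing used in the paper for $r>x_2$ is absorbed automatically because $\Phi'$ is monotone. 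The applications of Lemma~\ref{smooth} are the same in both arguments, on an interval a bounded multiple longer than the one in that lemma, and your closing remark that this is absorbed by replacing $\alpha$ with some $\alpha'\in(\eta,\alpha)$ correctly disposes of the only technical point that needs a word. Likewise, your reading of the $o(1)$'s (uniform in $r$ as $x\to\infty$ for \eqref{middle}--\eqref{small}, as $r\to\infty$ for \eqref{large}) matches the way the lemma is used later.
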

\begin{proof}
We begin by noting that
\[ Q_x'(r)=r\Psi'(r^2)-x\Psi'(xr) \] and \[
Q_x''(r)=\Psi'(r^2)+2r^2\Psi''(r^2)-x^2\Psi''(xr).\] We observe that
for $x_1\le r \le x_2$ Lemma~\ref{smooth} applies:
\[
Q_x''(r)=\Phi'(r^2)+r^2\Psi''(r^2)-x^2\Psi''(xr)=(1+o(1))\Phi'(x^2),
\]
and so we have established \eqref{middle}. For $r<x_1$, we use  the
following estimate:
\begin{align*}
Q_x(r)& \ge \frac{1}{2}\int_r^x \Psi'(s^2)(s-x) ds +
\frac{1}{2}\int_{x-[\Phi'(x^2)]^{-\alpha}}^x
\int_{x-[\Phi'(x^2)]^{-\alpha}}^t Q_x''(u)du dt \\
& \ge  \frac{c}{4}
(x-r)^2+\big(\frac{1}{4}+o(1)\big)[\Phi'(x^2)]^{1-2\alpha},
\end{align*} where we used again Lemma~\ref{smooth} in the last step.
Now observe that since $\Psi''(y)$ is a nondecreasing function, we
have
\[ Q_x''(r)\ge \Phi'(r^2)\] for $r\ge x$. We therefore obtain for
$x>x_2$:
\begin{align*} \label{large}Q_x(r)& \ge \frac{1}{2}\int_x^r
\Psi'(s^2)(s-x) ds + \frac{1}{2}\int_{r-[\Phi'(r^2)]^{-\alpha}}^r
\int_{r-[\Phi'(r^2)]^{-\alpha}}^t Q_x''(u)du dt \\
& \ge  \frac{c}{4}
(x-r)^2+\big(\frac{1}{4}+o(1)\big)[\Phi'(r^2)]^{1-2\alpha},
\end{align*} where Lemma~\ref{smooth} is applied once more. Hence
\eqref{large} also holds.
\end{proof}

\section{Proof of Theorem~B: Computation of the Bergman metric}

We begin by recalling that
\[ K_\Psi(z,z)=k(r^2),\]
where
\[k(r)=\sum_{n=0}^\infty c_d
r^d, \] and
\[ c_d:=\frac{(d+1)\cdots (d+n-1)}{(n-1)!\, s_{d+n-1}}.\]
A computation shows that
$$\beta^2(z, \xi):= |\xi|^2   \frac{k'(|z|^2)}{k(|z|^2)} +
|\langle z, \xi \rangle|^2 \left[\frac{k''((|z|^2))}{k((|z|^2))} -
\left(\frac{k'(|z|^2)}{k(|z|^2)}\right)^2\right]. $$ Thus Theorem~B
is a consequence of the following lemma.

\begin{lemma}\label{lemmahderiv2} Suppose that \eqref{basic} holds .
Then we have
$$\aligned  \frac{k'(r)}{k(r)} & =  \left(1+
o(1)\right)\Psi'(r), \\
\left(\frac{k'(r)}{k(r)}\right)' &
=(1+o(1))\Psi''(r)+o(1)\frac{\Psi'(r)}{r}
       \endaligned
   $$
   when $r\to \infty$.
\end{lemma}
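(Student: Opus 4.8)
The plan is to reduce both estimates to the first two moments of the discrete density with weights $c_dr^{d+n-1}$, via the Poisson‑summation representation from the proof of Lemma~\ref{lemmah}. Write $A(r):=r^{n-1}k(r)=\sum_{d\ge0}c_dr^{d+n-1}$. Since $r^{m}/s_{m}=L_r(m)/I(m)$ and $m(m-1)\cdots(m-n+2)=(d+1)\cdots(d+n-1)$ when $m=d+n-1$, one checks that $c_dr^{d+n-1}=\Omega_r(d+n-1)$, where $\Omega_r$ is precisely the $C^3$‑function ``$\Omega$'' constructed in the proof of Lemma~\ref{lemmah}; hence $A(r)=\sum_{m\in\Z}\Omega_r(m)$. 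Because the power series for $k$ converges on all of $\C$, we may differentiate termwise, and from $\partial_r\Omega_r(t)=(t/r)\Omega_r(t)$ we obtain, writing $\E_r$ and $\operatorname{Var}_r$ for mean and variance relative to the probabilities $\Omega_r(m)/A(r)$, that $A'(r)/A(r)=\E_r[m]/r$ and — since $\partial_r\E_r[m]=\operatorname{Var}_r(m)/r$ — that $(A'(r)/A(r))'=(\operatorname{Var}_r(m)-\E_r[m])/r^2$. Using $\log A(r)=(n-1)\log r+\log k(r)$ this yields
\[ \frac{k'(r)}{k(r)}=\frac{\E_r[m]-(n-1)}{r},\qquad \Big(\frac{k'(r)}{k(r)}\Big)'=\frac{\operatorname{Var}_r(m)-\E_r[m]+(n-1)}{r^2}, \]
so it suffices to prove $\E_r[m]=(1+o(1))\Phi(r)$ and $\operatorname{Var}_r(m)=(1+o(1))\,r\Phi'(r)$.

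To compute these moments, apply the Poisson summation formula to $\sum_m m^\ell\Omega_r(m)$ for $\ell=0,1,2$; integrating by parts three times, the nonzero frequencies contribute at most a constant times $\|(t^\ell\Omega_r)'''\|_1$, which by the Leibniz rule together with the bound for $\|\Omega_r'''\|_1$ from the proof of Lemma~\ref{lemmah} (and the analogous bounds for $\|\Omega_r''\|_1$, $\|\Omega_r'\|_1$, obtained the same way) is smaller than the zero‑frequency term by a factor $O((r\Phi'(r))^{-3/2})$, hence negligible. For the integrals $\int_0^\infty t^\ell\Omega_r(t)\,dt$ one expands $t^\ell$ about $\Phi(r)$ (so only central moments appear, and the cross terms $\propto\Phi(r)\int(t-\Phi(r))\Omega_r$ automatically drop out when the variance is formed) and applies the calculus scheme of Section~3 with $g_t=g_r$, using $g_r''(\Phi(r))=1/(r\Phi'(r))$, $g_r(\Phi(r))=-\Psi(r)$, and the slowly varying factor $t(t-1)\cdots(t-n+2)/((n-1)!\,I(t))$, which by Lemmas~\ref{smooth} and \ref{I-deriv} is $(1+o(1))$‑constant on $|t-\Phi(r)|\lesssim\sqrt r\,[\Phi'(r)]^{1-2\alpha}$. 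This gives $\int(t-\Phi(r))^2\Omega_r=(1+o(1))\,r\Phi'(r)\,\|\Omega_r\|_1$ from \eqref{scheme} with $m=2$ and — the place where precision is needed — $\int(t-\Phi(r))\Omega_r=o(\sqrt{r\Phi'(r)}\,\|\Omega_r\|_1)$ from the sharper estimate \eqref{scheme2}. Consequently $\operatorname{Var}_r(m)=(1+o(1))\,r\Phi'(r)$ and $\E_r[m]=\Phi(r)+o(\sqrt{r\Phi'(r)})$, and the latter equals $(1+o(1))\Phi(r)$ because $\sqrt{r\Phi'(r)}=o(\Phi(r))$, equivalently $\Psi''(r)=o(\Psi'(r)^2)$, which is guaranteed by \eqref{admissible} and \eqref{basic}.

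Finally, substituting into the two identities displayed above proves the lemma. The first assertion is immediate, the term $(n-1)/r$ being $o(\Psi'(r))$ since $\Psi'\gtrsim1$. For the second, the elementary identity $r\Phi'(r)-\Phi(r)=r^2\Psi''(r)$ (read off from $\Phi(x)=x\Psi'(x)$) turns the numerator into $r^2\Psi''(r)+o(r\Phi'(r))+O(1)$, so
\[ \Big(\frac{k'(r)}{k(r)}\Big)'=\Psi''(r)+o(\Phi'(r)/r)+O(1/r^2)=(1+o(1))\Psi''(r)+o(1)\frac{\Psi'(r)}{r}, \]
since $\Phi'(r)/r=\Psi''(r)+\Psi'(r)/r$ and $1/r^2=o(\Psi'(r)/r)$. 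The one genuinely delicate point is the precision of the moment estimates: the first central moment must be drawn from the improved scheme \eqref{scheme2} (so that the error in $\E_r[m]$ is $o(\sqrt{r\Phi'(r)})$ rather than merely $O(\sqrt{r\Phi'(r)})$), and the Fourier tails must be tracked carefully enough to confirm that every discretization error is of relative order $(r\Phi'(r))^{-3/2}$; once this is granted, the rest is routine bookkeeping with the Section~3 machinery.
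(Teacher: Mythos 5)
Your proof is correct and is essentially the paper's argument in a lightly repackaged form: the paper reduces to Lemma~\ref{estsigma} (the first and second centered moments $\sum_d c_d(d-\Phi(r))^j r^d$, $j=1,2$, estimated via the Poisson summation scheme of Section 3 with the sharper bound \eqref{scheme2} for $j=1$), and you re-derive the same moment estimates, phrasing the reduction through the mean/variance identities for $A'/A$ and $(A'/A)'$ rather than through the paper's direct power-series manipulation of $k'$, $k''$, and $k''k-(k')^2$. The only cosmetic difference is the index shift $m=d+n-1$, which you correctly absorb into an $O(1)$ term.
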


The proof of this lemma relies on the following estimates.

\begin{lemma}\label{estsigma}
Suppose that \eqref{basic} holds and let the coefficients $c_d$ be
as defined above. Then we have
\begin{align}\label{firstd} \sum_{d=1}^{\infty} c_d (d-\Phi(r))r^d& =
o([r\Phi'(r)]^{1/2} k(r)),\\
\label{secondd} \sum_{d=1}^{\infty} c_d (d-\Phi(r))^2 r^d&=
(1+o(1))r\Phi'(r)k(r).
\end{align}
   when $r\to \infty$.
\end{lemma}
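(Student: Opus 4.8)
The plan is to analyze the coefficient sum by comparing it, term by term, with the integral representation of the moments $s_{d+n-1}$, exactly as the Bergman kernel on the diagonal is controlled. Writing $c_d = \frac{(d+1)\cdots(d+n-1)}{(n-1)!\,s_{d+n-1}}$ and recalling from Section~3 that $s_{d+n-1} = e^{(d+n-1)\log\Phi^{-1}(d+n-1)-\Psi(\Phi^{-1}(d+n-1))}I(d+n-1)$, I would first rewrite
\[
c_d r^d = \frac{(d+1)\cdots(d+n-1)}{(n-1)!}\,\frac{L_r(d+n-1)\,r^{-(n-1)}}{I(d+n-1)},
\]
so that $k(r)=\sum_{d\ge 0} c_d r^d$ is, up to the polynomial prefactor and the shift by $n-1$, a sum of values of $L_r/I$. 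By the first formula of Lemma~\ref{L-deriv}, $L_r(t)$ is maximized at $t=\Phi(r)$, and by Lemma~\ref{I-est} the quotient $L_r(t)/I(t)$ concentrates near $t=\Phi(r)$ on the scale $\tau=\sqrt{r}[\Phi'(r)]^{1-2\alpha}$ (the same scale that appears throughout Section~3). The idea is then to replace the sum over $d$ by the corresponding integral $\int_0^\infty \Omega_0(t)\,dt$, where $\Omega_0(t)$ is a $C^3$ interpolant of $\frac{(t-n+2)\cdots t}{(n-1)!}\,\frac{L_r(t)}{r^{n-1}I(t)}$ for $t\ge n-1$ (just as $\Omega$ was used in the proof of Lemma~\ref{lemmah}); the Poisson summation formula together with the bound $\|\Omega_0'''\|_1 \ll \|\Omega_0\|_1$ — which follows from the pointwise estimate $|\Omega_0'''(t)|\lesssim(a^3+a^2b+ab^2+b^3)\Omega_0(t)$ established there, with $a=|\log(\Phi^{-1}(t)/r)|$, $b=[\Phi'(\Phi^{-1}(t))\Phi^{-1}(t)]^{-1/2}$ — shows the error between sum and integral is lower order, and the same applies after inserting the factors $(d-\Phi(r))$ or $(d-\Phi(r))^2$, since these grow only polynomially.

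Thus everything reduces to estimating $\int_0^\infty (t-\Phi(r))^j\,\Omega_0(t)\,dt$ for $j=1,2$. Here I would apply the calculus scheme of Section~3 with $g_t=g_r$ (so $g_r''(t)=[\Phi'(\Phi^{-1}(t))\Phi^{-1}(t)]^{-1}$, hence $c=g_r''(\Phi(r))=[\Phi'(r)r]^{-1}$ by Lemma~\ref{smooth}, and $1/\tau=o(c)$). For \eqref{secondd} the relevant integrand is $(t-\Phi(r))^2\Omega_0(t)$, which matches case (II) with $m=2$; scheme \eqref{scheme} then gives $O(C\,c^{-3/2})$ with the leading constant coming from the Gaussian, i.e. the integral equals $(1+o(1))\,c^{-1}\cdot\Omega_0(\Phi(r))\cdot\sqrt{2\pi}\,c^{-1/2}\cdot\frac{1}{\sqrt{2\pi}}$ — recognizing $c^{-1}=r\Phi'(r)$ and $\int_0^\infty \Omega_0 = (1+o(1))k(r)$, this yields $(1+o(1))r\Phi'(r)k(r)$. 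For \eqref{firstd} the integrand $(t-\Phi(r))\,\Omega_0(t)$ is, near $t=\Phi(r)$, asymptotically an odd function of $t-\Phi(r)$ — precisely condition (II$'$) — so the sharper bound \eqref{scheme2} applies and gives $o(c^{-1}) = o(r\Phi'(r)k(r))$; but I want the slightly different normalization $o([r\Phi'(r)]^{1/2}k(r))$ in the statement, which is in fact \emph{weaker} than what \eqref{scheme2} delivers, so it follows a fortiori. (One should double-check that the asymmetry introduced by the polynomial prefactor $(t-n+2)\cdots t$ and by the non-Gaussian part of $e^{-g_r}$ contributes only at the order already absorbed in the $o$; this is the point where one uses that those corrections are smooth and slowly varying on the scale $\tau$, via Lemma~\ref{smooth} and Lemma~\ref{L-deriv}.)

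The main obstacle, I expect, is not any single estimate but making the reduction from sum to integral fully rigorous while keeping track of the leading constant in \eqref{secondd}: the Poisson-summation/integration-by-parts argument controls the difference $\sum_j \widetilde{\Omega_0}(j) - \widetilde{\Omega_0}(0)$ and the Fourier-transform argument at frequency $\theta=0$ was only needed for an upper bound in Lemma~\ref{lemmah}, whereas here one needs a genuine asymptotic equality $\sum_d c_d(d-\Phi(r))^2 r^d = (1+o(1))\int_0^\infty(t-\Phi(r))^2\Omega_0(t)\,dt$. This should come from the stronger statement that the higher Poisson terms $\widetilde{\Omega_0}(j)$, $j\neq 0$, are bounded by $(2\pi|j|)^{-3}\|\Omega_0'''\|_1 = o(\|\Omega_0\|_1)$ — and since $\|\Omega_0\|_1 \simeq k(r)$ while the desired main term is of size $r\Phi'(r)k(r)\gg k(r)$, even the cruder bound $\|\Omega_0\|_1$ on each $\widetilde{\Omega_0}(j)$ is more than enough. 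So in fact the error control is comfortable; the real care goes into verifying conditions (I), (II)/(II$'$), (III) for the modified integrands and confirming that the Gaussian computation \eqref{formula} with $m=2$ produces exactly the constant $1$ (not merely $O(1)$) in \eqref{secondd}, which is what Lemma~\ref{lemmahderiv2}, and hence Theorem~B, requires.
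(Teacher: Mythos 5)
Your plan — relate $k(r)$ to the Poisson sum of $\Omega(t)=\frac{t(t-1)\cdots(t-n+2)}{(n-1)!}\frac{L_r(t)}{I(t)}$, replace $\Omega$ by $(t-\Phi(r))\Omega$ and $(t-\Phi(r))^2\Omega$, and invoke the calculus scheme of Section~3 with $c=g_r''(\Phi(r))=[r\Phi'(r)]^{-1}$, using \eqref{scheme} with $m=2$ for \eqref{secondd} and \eqref{scheme2} (via condition (II')) for \eqref{firstd} — is exactly the paper's route, and your remarks about controlling the sum-minus-integral error via $\|\Omega'''\|_1$ fill in a step that the paper leaves implicit.

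However, your handling of \eqref{firstd} contains a genuine error, both in bookkeeping and in logic. You assert that \eqref{scheme2} ``gives $o(c^{-1}) = o(r\Phi'(r)k(r))$'' and that the stated bound $o([r\Phi'(r)]^{1/2}k(r))$ is weaker and follows a fortiori. This is backwards: since $[r\Phi'(r)]^{1/2}\ll r\Phi'(r)$ as $r\to\infty$, the bound $o([r\Phi'(r)]^{1/2}k(r))$ is the \emph{stronger} statement, so it cannot be deduced a fortiori from $o(r\Phi'(r)k(r))$. If the scheme really delivered only $o(r\Phi'(r)k(r))$, the proof of Lemma~\ref{lemmahderiv2} would collapse, because the term $\frac{1}{r}\sum c_d(d-\Phi(r))r^d$ would then only be $o(\Phi'(r))k(r)$, which is not $o(\Psi'(r))k(r)$ when $\Phi'/\Psi'$ is unbounded. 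The resolution is a normalization you dropped: \eqref{scheme2} gives $\int(t-\Phi(r))\Omega(t)\,dt = o(c^{-1})\,\Omega(\Phi(r))$, while $\int\Omega(t)\,dt=(1+o(1))\sqrt{2\pi/c}\,\Omega(\Phi(r))\simeq r^{n-1}k(r)$; eliminating $\Omega(\Phi(r))$ yields $\int(t-\Phi(r))\Omega = o(c^{-1/2})\cdot r^{n-1}k(r) = o([r\Phi'(r)]^{1/2})\cdot r^{n-1}k(r)$, which after dividing by $r^{n-1}$ is precisely the stated bound — no a fortiori step is available or needed. (A similar but harmless slip appears in your \eqref{secondd} computation, where the $\sqrt{2\pi}$ factors do not cancel as written; the clean way is to observe that the ratio of the second Gaussian moment to the zeroth is exactly $c^{-1}$, giving $(1+o(1))r\Phi'(r)k(r)$ directly.)
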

\begin{proof}
The proof is essentially the same as the proof for the diagonal
estimates in Lemma~\ref{lemmah}. The only difference is that we
replace the function $\Omega(t)$ by respectively
$(t-\Phi(r))\Omega(t)$ and $(t-\Phi(r))^2\Omega(t)$. In the first
case, we have a function that satisfies condition (II') in
Section~3. This means that we may use \eqref{scheme2} to arrive at
\eqref{firstd}. To establish \eqref{secondd}, may we apply
\eqref{formula} with $m=2$ and take into account that we have the
explicit factor $(t-\Phi(r))^2$ in front of $\Omega(t)$.
\end{proof}

\begin{proof}[Proof of Lemma~\ref{lemmahderiv2}]
We write
\[ k'(r)=\frac{\Phi(r)}{r}(k(r)+O(1))+\frac{1}{r}\sum_{d=1}^\infty
c_d(d-\Phi(r))r^d;
\]
using Lemma~\ref{estsigma}, we obtain
\[ \frac{k'(r)}{k(r)}=\left(1+
o(1)\right)\Psi'(r)+o\left(\left[\frac{\Phi'(r)}{r}\right]^{1/2}\right).\]
The desired estimate for $k'/k$ follows because, in view of
Lemma~\ref{smooth}, we have
\[\Phi(r)\ge \int_{r-r^{1/2}[\Phi'(r)]^{-\alpha}}^r \Phi'(t) dt
=(1+o(1))r^{1/2} [\Phi'(r)]^{1-\alpha}\] for some $\alpha<1/2$.

To arrive at the second estimate, we first observe that
\[ \aligned k''(r)& = \frac{\Phi(r)-1}{r}(k'(r)+O(1))+
\frac{1}{r}\sum_{d=2}^\infty c_d d(d-\Phi(r))r^{d-1} \\
& =
\frac{\Phi(r)-1}{r}(k'(r)+O(1))+\frac{\Phi(r)}{r^2}\sum_{d=2}^\infty
c_d (d-\Phi(r)) r^d + \frac{1}{r^2}\sum_{d=2}^\infty c_d
(d-\Phi(r))^2 r^{d}. \endaligned \] Combining our expressions for
$k'$ and $k''$, we find that
\[ \aligned k''(r)k(r)-(k'(r))^2=& \frac{k(r)}{r^2}\sum_{d=2}^\infty c_d
(d-\Phi(r))^2 r^{d} -\frac{1}{r^2}\left[\sum_{d=2}^\infty c_d
(d-\Phi(r)) r^d\right]^2
\\ & -\frac{k(r)k'(r)}{r}+\Psi'(r)O(k(r)+k'(r)).
\endaligned\]
Using again Lemma~\ref{estsigma} and the estimate already obtained
for $k'/k$, we get \[
\left(\frac{k'(r)}{k(r)}\right)'=(1+o(1))\frac{\Phi'(r)}{r} -
(1+o(1))\frac{\Phi(r)}{r^2}\] from which the second estimate in
Lemma~\ref{lemmahderiv2} follows.
\end{proof}

\section{Hankel operators from Bloch functions}

We finally turn to the proof that (iii) implies (i) in Theorem A. A
different proof, using $L^2$ estimates for the $\overline{\partial}$
operator will be given in Section~9 below, subject to an additional
mild smoothness condition on $\Psi$. The proof in Section~9 gives a
more informative norm estimate, which will be crucial in our study
of Schatten class Hankel operators. The proof to be given below has
the advantage that it does not require $f$ to be holomorphic.

Using the reproducing formula, we find that
$$ \aligned
H_{\bar f}g (z) = \int_{\cn}  \left(\overline{ f (z)} - \overline{
f(w)}\right) K_\Psi(z,w)g(w) d\mu_\Psi(w).
\endaligned
$$
Therefore, by the definition of $\M$, we have
$$|H_{\bar f}g (z)| \leq \| f \|_{\M}  \int_{\cn}  \varrho(z, w)K_\Psi(z,w)g(w) d\mu_\Psi(w).$$ Thus it
suffices to prove that the operator $A$ defined as
\[ Ag(z)=\int_{\cn}  \varrho(z, w)K_\Psi(z,w)g(w)
d\mu_\Psi(w)\] is bounded on $L^2(\mu_{\Psi})$.

We shall use a standard technique known as Schur's test \cite[p.
42]{Z}. Set
\[ H(z,w)=
\varrho(z,w)|K_\Psi(z,w)|e^{-\frac{1}{2}(\Psi(|z|^2)+\Psi(|w|^2))}.\]
By the Cauchy--Scwharz inequality, we obtain
\[ |(Ag)(z)|^2e^{-\Psi(|z|^2)}\lesssim \int_{\cn} H(z,\zeta) dV(\zeta)
\int_{\cn} H(z,w) |g(w)|^2 e^{-\Psi(|w|^2)}dV(w).\] This means that
the operator $A$ is bounded on $L^2(\mu_{\Psi})$ if
\begin{equation}\label{keybound}
\sup_{z} \int_{\cn} H(z,\zeta) dV(\zeta)<\infty.
\end{equation}
We therefore set as our task to establish \eqref{keybound}.

Without loss of generality, we may assume that $z=(x,0,...,0)$ with
$x>0$. We begin by estimating $\varrho(z,w)$. To this end, write
$w=(w_1,\xi)$ with $\xi$ a vector in $\C^{n-1}$ and
$w_1=re^{i\theta}$ when $n>1$. Set $e_1=(1,0,...,0)$ and consider
the three curves
\[ \aligned \gamma_1 (t) &  = xe^{it} e_1, \ \ 0 \leq t\leq \theta, \\
\gamma_2 (t) &  = (x+t(r-x)) e^{i\theta} e_1, \ \ 0 \leq t \leq  1,\\
\gamma_3 (t) &  = (r e^{i\theta} ,t \xi), \ \ 0 \leq t \leq  1,
\endaligned\]
which together constitute a piecewise smooth curve from $z$ to $w$.
(When $n=1$, $\gamma_3$ does not appear and can be neglected.) Note
that
\[ \aligned
|\langle \gamma_1(t), \gamma_1'(t)\rangle|& = |\gamma_1(t)||\gamma'_1(t)|= x^2, \\
|\langle \gamma_2(t), \gamma_2'(t)\rangle| & =|\gamma_2(t)||\gamma'_2(t)|=
(x+t(r-x))|x-r|, \\
|\langle \gamma_3(t), \gamma_3'(t)\rangle| & = t |\xi|^2.\endaligned
\] By these observations and Theorem~B, we get the following
estimate:
\[ \aligned \varrho(z, w) \ \lesssim & \  x|\theta| [\Phi'(x^2)]^{1/2} +
[\Phi'(\max(x^2,r^2))]^{1/2} |x-r| \\
& + |\xi| [\Psi'(r^2+|\xi|^2)]^{1/2} + |\xi|^2
[\Psi''(r^2+|\xi|^2)]^{1/2}. \endaligned \] When estimating the last
term on the right-hand side of this inequality, we will use that
\begin{equation} \label{psideriv} [\Psi'(y)]^2\gtrsim \Psi''(y),
\end{equation} which is a consequence of our assumptions
\eqref{admissible} and \eqref{basic}. Indeed, assuming $\Psi''>0$,
we have $y \Psi''(y)\simeq \Phi'(y)$ since $\Psi''$ is a
nondecreasing function. Thus \eqref{psideriv} is equivalent to the
following:
\[ \Phi(t)\gtrsim t^{1/2} [\Phi'(t)]^{1/2}.\]
We arrive at this estimate because
\[ \Phi(t)=\Phi(0)+\int_0^t \Phi'(\tau)d\tau\ge
\Phi(0)+(1+o(1))t^{1/2}[\Phi'(t)]^{1/2}, \] where in the second step
we used Lemma~\ref{smooth} with $\alpha=1/2$.

For $\zeta=|\zeta|e^{i\theta}$, we set
\[
 h(\zeta)=
\begin{cases} \Phi'(|\zeta|), &
|\theta|\le  \theta_0(|\zeta|)  \\
|\zeta|^{-3/2}[\Phi'(|\zeta|)]^{-1/2}|\theta|^{-3}, & |\theta|>
\theta_0(|\zeta|)\end{cases}.\] Using this notation and
Lemma~\ref{lemmah}, we then obtain
\[ H(z,w)\lesssim \varrho(x,w)
h(xre^{i\theta})[\Psi'(xr)]^{n-1}
e^{-\frac{1}{2}(\Psi(x^2)+\Psi(r^2+|\xi|^2))-\Psi(xr)}.\]  By
Fubini's theorem, we may compute the integral in \eqref{keybound} by
first integrating with respect to the vector $\xi$ over $\C^{n-1}$
and then taking an area integral with respect to the complex
variable $w_1$ over $\C$. Since $y\mapsto \Psi(r^2+y^2)$ attains its
maximum at $y=0$ and has a second derivative larger than
$2\Psi'(r^2)$, we have that $\Psi(r^2+y^2)-\Psi(r^2)\ge
\Psi'(r^2)y^2$. Using spherical coordinates along with this fact, we
find that
\[ \int_{\C^{n-1}} e^{-\Psi(r^2+|\xi|^2)} dV_{n-1}(\xi)
\lesssim e^{-\Psi(r^2)} [\Psi'(r^2)]^{-n+1}.\] Similarly, using
again spherical coordinates, we get
\[ \int_{\C^{n-1}}\Theta(r,|\xi|) e^{-\Psi(r^2+|\xi|^2)} dV_{n-1}(\xi)
= C \int_0^\infty \Theta(r,y)y^{2n-2}e^{-\Psi(r^2+y^2)}dy,\] where
$C$ is the surface area of the unit sphere in $\C^{n-1}$ and
$\Theta$ is any suitable function of two variables. From the
estimate for $\varrho(z,w)$ and \eqref{psideriv} we see that we are
interested in the following two choices: (1)
$\Theta(r,y)=y[\Psi'(r^2+y^2)]^{1/2}$ and (2) $\Theta(r,y)=y^2
\Psi(r^2+y^2)$. In case (1), we use the Cauchy--Schwarz inequality,
so that we get
\[ \int_{\C^{n-1}}|\xi|[\Psi'(r^2+|\xi|^2)]^{1/2} e^{-\Psi(r^2+|\xi|^2)} dV_{n-1}(\xi)
\lesssim e^{-\Psi(r^2)}\left[\int_0^{\infty} y^{4n-3}
e^{-(\Psi(r^2+y^2)-\Psi(r^2))} dy\right]^{1/2}.\] Estimating
$\Psi(r^2+y^2)-\Psi(r^2)$ as above, we therefore get
\[ \int_{\C^{n-1}}|\xi|[\Psi'(r^2+|\xi|^2)]^{1/2} e^{-\Psi(r^2+|\xi|^2)} dV_{n-1}(\xi)
\lesssim e^{-\Psi(r^2)}[\Psi'(r^2)]^{-n+1}.\] In case (2), we
integrate by parts and get
\[ \int_{\C^{n-1}}|\xi|^2 \Psi'(r^2+|\xi|^2) e^{-\Psi(r^2+|\xi|^2)} dV_{n-1}(\xi)
\lesssim \int_0^{\infty} y^{2n-1} e^{-\Psi(r^2+y^2)} dy.\] We
proceed as above and obtain
\[ \int_{\C^{n-1}}|\xi|^2 \Psi'(r^2+|\xi|^2) e^{-\Psi(r^2+|\xi|^2)} dV_{n-1}(\xi)
\lesssim e^{-\Psi(r^2)}[\Psi'(r^2)]^{-n+1}.\]

With $\sigma$ denoting Lebesgue measure on $\C$, we therefore get
\[ \int_{\cn} H(z,w)dV(w)\lesssim \int_{\C} G(x,r,\theta)
\left[\frac{\Psi'(rx)}{\Psi'(r^2)}\right]^{n-1}h(xre^{i\theta})e^{-Q_x(r)}d\sigma(re^{i\theta}),\]
where
\[ G(x,r,\theta)=x|\theta|[\Phi'(x^2)]^{1/2} + [\Phi'(\max(x^2,r^2))]^{1/2}|x-r|+
1\] and $Q_x$ is as defined by \eqref{Qdef}.

We now resort to polar coordinates; simple calculations show that
\[  \int_{-\pi}^{\pi} h(xre^{i\theta}) d\theta \lesssim
\left[\frac{\Phi'(xr)}{xr}\right]^{1/2} \ \ \ \text{and} \ \ \
\int_{-\pi}^{\pi}|\theta| h(xre^{i\theta}) d\theta \lesssim
\frac{1}{xr}\] so that \[ \int_{\cn} H(z,w)dV(w)\lesssim
\int_{0}^\infty (S_{x}(r)+T_x(r)) e^{-Q_x(r)}rdr,\] where
\[ S_x(r) =
\left(\frac{[\Phi'(x^2)]^{1/2}}{r}+
\left[\frac{\Phi'(xr)}{xr}\right]^{1/2}\right)\left[\frac{\Psi'(rx)}{\Psi'(r^2)}\right]^{n-1}
\] and \[ T_x(r) =\varphi(\max(x^2,r^2))|x-r|
\left[\frac{\Phi'(xr)}{xr}\right]^{1/2}\left[\frac{\Psi'(rx)}{\Psi'(r^2)}\right]^{n-1}.\]
By Lemma~\ref{LemQ} and a straightforward argument, we find that
both $S_xe^{-Q_x}$ and $T_xe^{-Q_x}$ satisfy conditions (I), (II),
(III) of Section 3 (with $x=t$, $Q_x=g_t$, $x_0=x$, and
$\tau=[\Phi'(x)]^{-\alpha}$). Hence \eqref{scheme} applies with
$m=0$ and $m=1$ for the respective integrands, so that we get
\[ \sup_{x>0} \int_{0}^\infty S_x(r)
 e^{-Q_x(r)}rdr<\infty \ \ \ \text{and} \ \ \ \sup_{x>0} \int_{0}^\infty T_x(r)
 e^{-Q_x(r)}rdr<\infty.
\] We may therefore conclude that \eqref{keybound} holds.
\section{Compactness of Hankel
operators}\label{compact}

We now turn to a study of the relation between the spectral
properties of Hankel operators and the asymptotic behavior of their
symbols. We begin with the case of compact Hankel operators.

An entire function is said to be of vanishing mean oscillation with
respect to $\Psi$ if $(\mo f)(z) = o(1) $ as $|z|\to + \infty.$
Entire functions of vanishing mean oscillation form a closed
subspace of $\bmoa(\Psi)$ which we will denote by $\vmoa(\Psi)$. In
accordance with our preceding discussion, we define the little Bloch
space $\littlebloch$ as the collection of functions $f$ in $\M$ for
which
$$\sup_{\xi \in \cn \setminus \{0\}}
\frac{|\langle \nabla f(z), \overline{\xi} \rangle|}{\beta(z, \xi)}
= o(1) \ \ \ \text{when} \ \  |z|\to + \infty.$$  The main result of
this section reads as follows.
\newtheorem*{thC}{\bf Theorem C}
\begin{thC}  Let $\Psi$ be a logarithmic growth function,  and
suppose that there exists a real number $\eta<1/2$ such that
\eqref{basic} holds. If  $f$ is an entire function on $\cn$, then
the following statements are equivalent:
\begin{itemize}
\item[(i)]The function $f$ belongs to $\cT(\Psi)$ and the
Hankel operator $H_{\bar f} $ on $\A$ is compact;
% from ${\mathcal A}^2(\Psi)$ into $L^2( \mu_\Psi)$;
\item[(ii)] The function $f$ belongs to $\vmoa(\Psi)$;
\item[(iii)] The function $f$ belongs to $\littlebloch$.
\end{itemize}
\end{thC}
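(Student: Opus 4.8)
The plan is to mirror the structure of the proof of Theorem~A, replacing ``bounded'' by ``compact'' and the supremum conditions by vanishing-at-infinity conditions throughout. As with Theorem~A, the implications (i)~$\Rightarrow$~(ii) and (ii)~$\Rightarrow$~(iii) are the easy direction, obtained from the same general-nonsense arguments, while the work is concentrated in (iii)~$\Rightarrow$~(i).

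\textbf{(i) $\Rightarrow$ (ii).} I would use the identity \eqref{general}: for $f$ entire, $\widetilde{|f|^2}(z)-|f(z)|^2 = \|H_{\bar f}K_\Psi(\cdot,z)\|^2/K_\Psi(z,z)$, so $(\mo f)^2(z)$ equals the squared norm of $H_{\bar f}$ applied to the normalized reproducing kernel $k_z := K_\Psi(\cdot,z)/\sqrt{K_\Psi(z,z)}$. The point is that $k_z \to 0$ weakly in $\A$ as $|z|\to\infty$ (pairing $k_z$ against any polynomial, or more generally any element of $\A$, and using the off-diagonal decay of $K_\Psi$ from Lemma~\ref{lemmah} together with $\|k_z\|=1$). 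Hence if $H_{\bar f}$ is compact it maps the weakly null sequence $k_z$ to a norm-null net, which is precisely $(\mo f)(z)=o(1)$, i.e.\ $f\in\vmoa(\Psi)$.

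\textbf{(ii) $\Rightarrow$ (iii).} This is local-to-pointwise and follows from Lemma~\ref{bergmanbloch} exactly as \eqref{bmoatobloch} did: choosing $\gamma(t)=z+t\xi$ gives $|\langle(\nabla f)(z),\bar\xi\rangle|/\beta(z,\xi)\le 2\sqrt2\,(\mo f)(z)$ for all $z$ and all $\xi\ne 0$. Taking the supremum over $\xi$ and letting $|z|\to\infty$, the right-hand side tends to $0$ by hypothesis, so $f\in\littlebloch$.

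\textbf{(iii) $\Rightarrow$ (i).} Here I would re-run the Schur-test argument of Section~5 but localize it. Fix $f\in\littlebloch$ and, given $R>0$, split $H_{\bar f}=H_{\bar f,R}^{(1)}+H_{\bar f,R}^{(2)}$ according to whether the output point $z$ lies in the ball $|z|\le R$ or not; more precisely write $H_{\bar f}g(z)=\int_{\cn}(\overline{f(z)}-\overline{f(w)})K_\Psi(z,w)g(w)\,d\mu_\Psi(w)$ and estimate $|H_{\bar f}g(z)|\le \varepsilon(z)\int_{\cn}\varrho(z,w)|K_\Psi(z,w)g(w)|\,d\mu_\Psi(w)$, where $\varepsilon(z):=\sup_{\xi\ne0}|\langle(\nabla f)(z),\bar\xi\rangle|/\beta(z,\xi)\to0$ as $|z|\to\infty$ — this uses that along the connecting curve $\gamma$ built in Section~5 the factor $\|f\|_{\M}$ can be replaced by $\sup_t\varepsilon(\gamma(t))$, and the curve stays near the larger of $|z|,|w|$. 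For $|z|>R$ the Schur-test computation of Section~5 then produces the bound $\lesssim\big(\sup_{|u|>cR}\varepsilon(u)\big)\|g\|$, which is small. For $|z|\le R$ one checks that the restriction of $A$ (hence of $H_{\bar f}$) to this regime is compact: the kernel $\varrho(z,w)|K_\Psi(z,w)|e^{-\frac12(\Psi(|z|^2)+\Psi(|w|^2))}$ decays rapidly in $w$ uniformly for $|z|\le R$ (again by Lemma~\ref{lemmah} and the $Q_x$-estimates of Lemma~\ref{LemQ}), so it is a Hilbert--Schmidt, in particular compact, operator on $L^2(\mu_\Psi)$ when restricted appropriately. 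Writing $H_{\bar f}$ as a norm-limit of compact operators as $R\to\infty$ gives compactness.

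\textbf{Main obstacle.} The delicate point is the (iii)~$\Rightarrow$~(i) step: one must make the ``$\varepsilon(z)$ in front of the integral'' argument rigorous, i.e.\ verify that in the curve construction of Section~5 the oscillation of $f$ is controlled by $\varepsilon$ evaluated on a region that tends to infinity with $\min(|z|,|w|)$ — so that when \emph{both} $|z|$ and $|w|$ are large the whole contribution is small — while the remaining piece (one of the two variables bounded) is genuinely compact rather than merely small. Organizing the decomposition so that these two features hold simultaneously, and quoting \eqref{scheme}/Lemma~\ref{LemQ} for the uniform tail bounds, is where the care is needed; the rest is a routine adaptation of the boundedness proof with ``$\sup$'' replaced by ``$\limsup=0$''.
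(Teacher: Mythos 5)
Your steps (i) $\Rightarrow$ (ii) and (ii) $\Rightarrow$ (iii) match the paper: the paper proves (i) $\Rightarrow$ (ii) by combining the identity \eqref{general} with the weak-convergence lemma for normalized reproducing kernels (Lemma~\ref{w-cvge}), and proves (ii) $\Rightarrow$ (iii) directly from \eqref{bmoatobloch}, exactly as you do.

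For (iii) $\Rightarrow$ (i) the paper takes a genuinely different route, and your route as written has a gap. The paper modifies the \emph{symbol}, not the operator. Given $\varepsilon>0$ one chooses $R_0$ with the pointwise Bloch quotient below $\varepsilon/2$ for $|u|\ge R_0$, and then argues that there is $R>R_0$ such that $|f(z)-f(w)|\le\varepsilon\,\varrho(z,w)$ whenever $|z|\ge R$ and $w$ is \emph{arbitrary}. The reason this global-in-$w$ Lipschitz bound is possible, even though the connecting curve in \eqref{bergmanmetric} may pass through $\{|u|<R_0\}$ where the Bloch quotient is not small, is that $\beta(z,\xi)/|\xi|\to\infty$: the Bergman diameter of $\{|u|<R_0\}$ is a fixed finite constant while $\varrho(z,w)\to\infty$ as $|z|\to\infty$, so the bounded inner contribution is absorbed into $\varepsilon\,\varrho(z,w)$. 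Next, Lemma~\ref{approx-lbloch} (a McShane-type extension in the Bergman metric) produces $f_0$ agreeing with $f$ on $|z|\ge R$ and satisfying $|f_0(z)-f_0(w)|\le\varepsilon\,\varrho(z,w)$ for all $z,w$. Writing $H_{\bar f}=H_{\bar f-\bar f_0}+H_{\bar f_0}$, the first term has compactly supported continuous symbol and is compact, while the Schur test from Section~5 (which, as noted there, does not require the symbol to be holomorphic) gives $\|H_{\bar f_0}\|\lesssim\varepsilon$; letting $\varepsilon\to0$ finishes.

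Your proposal instead cuts the operator by whether $|z|\le R$, and the step that fails as stated is the estimate $|H_{\bar f}g(z)|\le\varepsilon(z)\int\varrho(z,w)|K_\Psi(z,w)g(w)|\,d\mu_\Psi(w)$. The oscillation $|f(z)-f(w)|$ is controlled by the supremum of the pointwise Bloch quotient along the connecting curve, and the Section~5 curve has $\gamma_2$ interpolating radially between $|z|$ and $r=|w_1|$, so it does \emph{not} stay near the larger of $|z|,|w|$; for $|w|$ small it passes through the region where the Bloch quotient is not small. This is exactly what you flag as the main obstacle but do not resolve. Filling it requires the blow-up observation above, and once that is in hand the paper's symbol-modification route is shorter: it reuses the Theorem~A Schur test wholesale and avoids a separate Hilbert--Schmidt estimate for the inner piece.
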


Our proof of Theorem~C requires the following two lemmas.
\begin{lemma}\label{w-cvge}
%If $f$ belongs to $\cT(\Psi)$ and the Hankel operator $H_{\bar f} $
%is compact from ${\mathcal A}^2(\Psi)$ into $L^2( \mu_\Psi)$, then
The normalized Bergman kernels $K_\Psi(\cdot, z)/\sqrt{K_\Psi(z, z)}
$ converge weakly to $0$ in $\A$ when $|z| \to + \infty.$
\end{lemma}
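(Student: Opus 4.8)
The plan is to use the two elementary facts that the normalized reproducing kernels $e_z:=K_\Psi(\cdot,z)/\sqrt{K_\Psi(z,z)}$ all have norm one in $\A$ (since $\|K_\Psi(\cdot,z)\|^2=K_\Psi(z,z)$) and that holomorphic polynomials are dense in $\A$ by definition. Weak convergence of a bounded net is equivalent to convergence of the pairings against a dense set, so it suffices to show that $\langle g,e_z\rangle\to0$ for $g$ ranging over holomorphic polynomials, and then for general $g$ to split $g=\varphi+h$ with $\varphi$ a holomorphic polynomial and $\|h\|$ small. By the reproducing property $\langle g,e_z\rangle=g(z)/\sqrt{K_\Psi(z,z)}$, and since $|h(z)|\le\|h\|\sqrt{K_\Psi(z,z)}$ we get
\[ \frac{|g(z)|}{\sqrt{K_\Psi(z,z)}}\le \frac{|\varphi(z)|}{\sqrt{K_\Psi(z,z)}}+\|h\|.\]
Thus everything reduces to proving that $\varphi(z)/\sqrt{K_\Psi(z,z)}\to0$ as $|z|\to\infty$ for every holomorphic polynomial $\varphi$, after which we let $\|h\|\to0$.

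For this reduction I would use the explicit diagonal expansion $K_\Psi(z,z)=k(|z|^2)$ with $k(t)=\sum_{d=0}^\infty c_d t^d$ and
\[ c_d=\frac{(d+1)\cdots(d+n-1)}{(n-1)!\,s_{d+n-1}}>0\]
for every $d\ge0$. Because all Taylor coefficients of $k$ are strictly positive, we have $k(t)\ge c_m t^m$ for every nonnegative integer $m$ and all $t\ge0$; in particular $k$ grows faster than any power of $t$. If $\varphi$ has total degree $N$, then $|\varphi(z)|\lesssim(1+|z|)^N$, and taking $m=N+1$ yields
\[ \frac{|\varphi(z)|^2}{K_\Psi(z,z)}\lesssim \frac{(1+|z|)^{2N}}{c_{N+1}\,|z|^{2N+2}}\longrightarrow0\]
as $|z|\to\infty$, which is exactly what is needed.

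I do not expect a genuine obstacle here: the only mildly delicate point is the passage to a dense subset, which is the standard criterion for weak convergence of a norm-bounded family, together with the observation that the positivity of the $c_d$ forces $K_\Psi(z,z)$ to dominate every polynomial in $|z|$. One could alternatively invoke Lemma~\ref{lemmahderiv2}, which gives $\log K_\Psi(z,z)=(1+o(1))\Psi(|z|^2)$ and hence the same growth statement, but the power-series argument above is shorter and entirely self-contained.
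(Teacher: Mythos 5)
Your argument is correct and follows essentially the same route as the paper: reduce by density of polynomials (together with the uniform norm bound $\|e_z\|=1$) to showing $|z|^m/\sqrt{K_\Psi(z,z)}\to 0$, which follows because $K_\Psi(z,z)$ is an infinite power series in $|z|^2$ with positive coefficients. Your write-up merely spells out the standard dense-subset reduction and the bound $k(t)\ge c_m t^m$ that the paper leaves implicit.
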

\begin{proof}
Since the holomorphic polynomials are dense in $\A$, it suffices to
show that for any non-negative integer $m$, we have
$$ \frac{ |z|^m}{\sqrt{K_\Psi(z, z)}}  \to 0$$
as $|z| \to +\infty.$  But this holds trivially because
$K_\Psi(z,z)$ is an infinite power series in $|z|^2$ with positive
coefficients.
\end{proof}

 \begin{lemma}\label{approx-lbloch}
Let $f : \cn \to \C$ be a function for which there exist positive
numbers $R$ and $\varepsilon$ such that
 $$|f(z) - f(w)| \leq \varepsilon \varrho(z,w)$$
 whenever $|z|\ge R.$ Then there exists a function
 $f_{0} :  \cn \to \C$ such that $f(z) = {f_0}(z)$ for  $|z| \geq R$ and
 $$|f_0(z) - f_0(w)| \leq \varepsilon \varrho(z,w)$$
for all points $z$ and $w$ in  $\cn.$
\end{lemma}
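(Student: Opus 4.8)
The plan is to recognise Lemma~\ref{approx-lbloch} as a Lipschitz-extension statement for the metric space $(\cn,\varrho)$ and to prove it by the classical McShane--Whitney construction. Write $E:=\{z\in\cn:\ |z|\ge R\}$; the hypothesis says exactly that the restriction $f|_E$ is $\varepsilon$-Lipschitz with respect to $\varrho$, and the task is to extend it to all of $\cn$ without increasing this constant, keeping $f_0=f$ on $E$. Since $\varrho$ is a genuine finite-valued metric on $\cn$, this is the usual setting for McShane's formula. The only extra feature is that $f$ is $\C$-valued, which I would handle by treating the real and imaginary parts $u=\operatorname{Re}f$ and $v=\operatorname{Im}f$ separately, each of which inherits the bound $|u(z)-u(w)|\le|f(z)-f(w)|\le\varepsilon\varrho(z,w)$ (and similarly for $v$) whenever $z,w\in E$.

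Concretely, I would set, for every $z\in\cn$,
\[
u_0(z):=\inf_{w\in E}\bigl(u(w)+\varepsilon\,\varrho(z,w)\bigr),\qquad
v_0(z):=\inf_{w\in E}\bigl(v(w)+\varepsilon\,\varrho(z,w)\bigr),
\]
and then $f_0:=u_0+iv_0$. Three points then have to be checked, all routine. First, $u_0$ and $v_0$ are finite: fixing any $w_0\in E$, the $\varepsilon$-Lipschitz bound on $E$ together with the triangle inequality for $\varrho$ gives $u(w)+\varepsilon\varrho(z,w)\ge u(w_0)-\varepsilon\varrho(z,w_0)$ for all $w\in E$, so each infimum is bounded below. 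Second, $f_0=f$ on $E$: if $|z|\ge R$, the choice $w=z$ gives the value $u(z)$, while for any other $w\in E$ the hypothesis yields $u(w)+\varepsilon\varrho(z,w)\ge u(z)$; hence $u_0(z)=u(z)$, and likewise $v_0(z)=v(z)$. Third, $u_0$ and $v_0$ are $\varepsilon$-Lipschitz on all of $\cn$: for $z,z'\in\cn$ and $w\in E$ we have $u(w)+\varepsilon\varrho(z,w)\le\bigl(u(w)+\varepsilon\varrho(z',w)\bigr)+\varepsilon\varrho(z,z')$, whence $u_0(z)\le u_0(z')+\varepsilon\varrho(z,z')$ after taking the infimum over $w\in E$, and the reverse inequality by symmetry; the same for $v_0$. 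Combining the two coordinate bounds gives the asserted Lipschitz estimate for $f_0$.

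I do not expect a genuine obstacle here: the lemma is of soft nature and the McShane formula does essentially all the work. The only place where the hypothesis is really used --- rather than merely the metric-space axioms for $\varrho$ --- is the verification that the infimum defining $u_0$ (and $v_0$) is attained at $w=z$ when $|z|\ge R$, which is what pins $f_0$ to $f$ on $E$; this is precisely why the hypothesis must bound $|f(z)-f(w)|$ itself and not just the gradient of $f$ along curves. The one thing worth a word is the passage from the two scalar $\varepsilon$-Lipschitz estimates to a single estimate for the $\C$-valued $f_0$: the coordinatewise argument as written gives $|f_0(z)-f_0(w)|\le\sqrt{2}\,\varepsilon\,\varrho(z,w)$, and the extra factor is immaterial for the way the lemma is used in the proof of Theorem~C, where $\varepsilon$ arises from the $o(1)$ behaviour of $f\in\littlebloch$ and may be taken arbitrarily small; if the constant $\varepsilon$ is wanted verbatim, one simply runs the construction with $\varepsilon/\sqrt2$ in place of $\varepsilon$ (which is available in every application) and absorbs the $\sqrt2$.
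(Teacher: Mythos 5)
Your proof is correct and uses essentially the same approach as the paper, namely the McShane extension formula $f_0(z)=\inf_w\{f(w)+\varepsilon\varrho(z,w)\}$; the paper takes the infimum over all of $\cn$ while you restrict it to $E=\{|w|\ge R\}$, but both work given the hypothesis. You are in fact more careful than the paper on one point: the paper's ``without loss of generality $f$ is real-valued'' silently absorbs the coordinatewise $\sqrt2$ loss for $\C$-valued $f$, which you correctly identify and correctly observe is immaterial in the application to Theorem~C, where $\varepsilon$ is arbitrarily small.
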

\begin{proof} We argue as in the proof of Lemma 5.1 in  \cite{Ba1}.
We assume without loss of generality that $f$ is real-valued and set
$$f_0(z) := \inf_{w\in \cn} \{ f(w) +    \varepsilon \varrho(z,w)\}.$$
Then a straightforward argument using the triangle inequality for
the Bergman metric shows that $f_0$ has the desired properties.
 \end{proof}

\begin{proof}[Proof of Theorem~C]
We first prove the implication (i) $\Rightarrow$ (ii). Assuming that
$H_{\bar f}$ is compact, we obtain, using Lemma~\ref{w-cvge}, that
$$ [(\mo f)(z)]^2
  = \frac{\|H_{\bar f}  K_\Psi(\cdot,z)\|^2}{K_\Psi(z,z)} \to 0
$$
when $|z| \to +\infty$. This gives the desired conclusion.

We next note that the implication (ii) $\Rightarrow$ (iii) is
immediate from \eqref{bmoatobloch}.

Finally, to prove that (iii) implies (i), in view of Theorem~A, we
only need to prove that the bounded Hankel operator
$H_{\overline{f}}$ is compact whenever (iii) is satisfied. To see
that this holds, we choose an arbitrary positive $\varepsilon$.
Assuming (iii), we may find a positive $R_0$ such that
$$|\langle (\nabla f ) (z), \overline{\xi} \rangle|\leq   \frac{\varepsilon}{2}  \beta(z, \xi)$$
whenever $|z| \geq R_0 $ and $\xi$ is in  $\cn \setminus \{0\}.$
Then for some $R>R_0$ we have
$$|f(z) - f(w)| \leq \varepsilon \varrho(z,w)$$
as long as $|z|\geq R$. Indeed, this follows because
$\beta(z,\xi)/|\xi|\to\infty$ when $|z|\to\infty$ so that, whenever
$|z|$ is sufficiently large, $\varrho(z,w)$ is ``essentially''
determined by the contribution to the integral in
\eqref{bergmanmetric} from the points that lie outside the ball of
radius $R_0$ centered at $0$. Now let $f_0$ be the function obtained
from Lemma~\ref{approx-lbloch}. We write
 $$H_{\bar f} =  H_{\bar f - {\bar f}_0}  + H_{{\bar f}_0}  $$
and observe  that $\bar f - {\bar f}_0$ is a compactly supported
continuous function on $\cn$. Hence $H_{\bar f - {\bar f}_0}$ is
compact. On the other hand, if $ g$ a holomorphic polynomial, then
$$ \aligned
\left| H_{\bar {f_0}}g (z)\right| & \lesssim \int_{\cn}  \left|\bar f_0 (w) - \bar f_0(z)\right|
 \left|K_\Psi(z,w)g(w)\right| d\mu_\Psi(w) \\
&  \leq  \varepsilon \int_{\cn}
 \beta(z, \xi) \left|K_\Psi(z,w)g(w)\right| d\mu_\Psi(w)
\endaligned
$$
so that, by the proof of Theorem~A, we see that $\|H_{\bar {f_0}}\|
\lesssim \varepsilon.$ The implication (iii) $\Rightarrow$ (i)
follows because $\varepsilon$ can be chosen arbitrarily small.
 \end{proof}

\section{The geometry of Bergman balls of fixed radius}

In what follows, we will need the analogue of Lemma~\ref{smooth} for
the function $\Psi$ when $n>1$. We will therefore assume that
\begin{equation}\label{basic2}
\Psi''(t)=O(t^{-\frac{1}{2}}[\Psi'(t)]^{1+\eta}) \ \ \ \text{when} \
t\to\infty \end{equation} for some $\eta<1/2$ whenever $n>1$. This
is again a mild smoothness condition on $\Psi$.

\begin{lemma}\label{smooth2}
Assume that \eqref{basic2} holds for some $\eta<1/2$. Then, for any
fixed $\alpha>\eta$, we have
\[ \sup_{|\tau|\le t^{1/2} [\Psi'(t)]^{-\alpha}} \Psi'(t+\tau)=(1+o(1))\Psi'(t)\]
when $t\to\infty$.
\end{lemma}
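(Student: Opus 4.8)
The plan is to mimic verbatim the proof of Lemma~\ref{smooth} given above, with the roles of $\Phi'$ and $\Phi''$ replaced by $\Psi'$ and $\Psi''$; condition \eqref{basic2} is the exact analogue of \eqref{basic} with $\Psi$ in place of $\Phi$, so the same manipulation should go through unchanged. The essential point to check is that $\Psi'$ enjoys the same qualitative properties as $\Phi'$ that were used: namely $\Psi'>0$ and $\Psi''\ge 0$ by \eqref{admissible}, so $\Psi'$ is positive and nondecreasing, and moreover $\Psi'''\ge 0$ guarantees that $\Psi''$ is nondecreasing, which is what makes the one-sided bounds below into two-sided ones.

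First I would fix $\alpha>\eta$ and rewrite \eqref{basic2} in the form $[\Psi'(x)]^{-1-\eta}\Psi''(x)=O(x^{-1/2})$ as $x\to\infty$. Next, for $|\tau|\le t^{1/2}[\Psi'(t)]^{-\alpha}$, I would estimate the increment of $[\Psi'(\cdot)]^{-\eta}$ along the segment from $t$ to $t+\tau$:
\[
\big|[\Psi'(t+\tau)]^{-\eta}-[\Psi'(t)]^{-\eta}\big|
=\eta\Big|\int_t^{t+\tau}[\Psi'(s)]^{-1-\eta}\Psi''(s)\,ds\Big|
=|\tau|\,O\big(t^{-1/2}\big)=O\big([\Psi'(t)]^{-\alpha}\big),
\]
where in the second step one uses the monotonicity of $\Psi'$ and of $\Psi''$ to control $[\Psi'(s)]^{-1-\eta}\Psi''(s)$ on the segment by a constant times its value near $t$ (this is the only place the radial convexity hypotheses \eqref{admissible} are invoked, and it is exactly the step labelled ``The result follows from this relation'' in Lemma~\ref{smooth}). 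Since $\alpha>\eta$, dividing by $[\Psi'(t)]^{-\eta}$ shows that $[\Psi'(t+\tau)]^{-\eta}=(1+o(1))[\Psi'(t)]^{-\eta}$ uniformly for $\tau$ in the stated range, and raising to the power $-1/\eta$ gives $\Psi'(t+\tau)=(1+o(1))\Psi'(t)$; taking the supremum over $\tau$ yields the claim.

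The only mild obstacle is the passage from the pointwise bound on $\Psi''/[\Psi']^{1+\eta}$ to the bound on the increment of $[\Psi']^{-\eta}$: one must ensure the $O$-term stays uniform once $\tau$ is allowed to range over an interval of length $\sim t^{1/2}[\Psi'(t)]^{-\alpha}$ rather than being a single point. This is handled exactly as in \cite{HR} and in Lemma~\ref{smooth}: because $\Psi'$ and $\Psi''$ are both nondecreasing, for $s$ between $t$ and $t+\tau$ one has $[\Psi'(s)]^{-1-\eta}\Psi''(s)\le [\Psi'(t)]^{-1-\eta}\Psi''(t+\tau)$, and the range of $\tau$ is small enough that $\Psi''(t+\tau)$ and $\Psi'(t+\tau)$ do not change by more than a bounded factor — a fact one obtains by running the same argument first with a crude a priori bound and then bootstrapping. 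No new idea beyond the proof of Lemma~\ref{smooth} is required.
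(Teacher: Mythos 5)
Your proof is correct and follows the approach the paper intends: Lemma~\ref{smooth2} is stated without proof because it is meant to be the verbatim analogue of Lemma~\ref{smooth} with $\Psi$ in place of $\Phi$, and your argument reconstructs exactly that. One small remark: the monotonicity/bootstrapping discussion in your last paragraph is unnecessary — once one notes that $|\tau|\le t^{1/2}[\Psi'(t)]^{-\alpha}=o(t)$ (so that $s\simeq t$ throughout the segment), the uniform $O$-bound from \eqref{basic2} already gives $[\Psi'(s)]^{-1-\eta}\Psi''(s)=O(t^{-1/2})$ on the whole range of integration, with no appeal to the monotonicity of $\Psi'$ or $\Psi''$ required.
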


We are interested in describing geometrically the Bergman ball
\[ B(z,a)=\{w:\ \varrho(z,w)<a\}. \]
Let $P_z$ denote the orthogonal projection in $\cn$ onto the complex
line $\{\zeta z: \zeta \in \C\}$, where $z$ is an arbitrary point in
$\cn\setminus \{0\}$. It will be convenient to let $P_0$ denote the
identity map. We use the notation
\[ D(z,a)=\left \{w:\ |z-P_zw|\le a[\Phi'(|z|^2)]^{-1/2}, \ |w-P_z
w| \le a[\Psi'(|z|^2)]^{-1/2}\right\}. \] Then we have the following
result.
\begin{lemma}\label{bergmannball}
Suppose that there exists a real number $\eta<1/2$ such that
\eqref{basic} holds and that \eqref{basic2} holds if $n>1$. Then,
for every positive number $a$, there exist two positive numbers $m$
and $M$ such that
\[ D(z,m)\subset B(z,a) \subset D(z,M) \]
for every $z$ in $\cn$.
\end{lemma}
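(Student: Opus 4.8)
The plan is to reduce the statement to an estimate on the Bergman length of segments emanating from $z$, using Theorem~B together with the smoothness Lemmas~\ref{smooth} and \ref{smooth2} to freeze the coefficients $\Psi'$, $\Psi''$, $\Phi'$ on the relevant scales. First I would fix $z\in\cn\setminus\{0\}$ and choose an orthonormal frame adapted to $z$, so that $P_zw$ and $w-P_zw$ correspond to the ``radial'' and ``transverse'' parts of the displacement; by Theorem~B, for $|z|$ large we have $\beta^2(z,\xi)\simeq |\xi|^2\Psi'(|z|^2)+|\langle z,\xi\rangle|^2\Psi''(|z|^2)$, and since $|z|^2\Psi''(|z|^2)\simeq\Phi'(|z|^2)$ while $\Psi'(|z|^2)\lesssim\Phi'(|z|^2)/|z|^2\cdot|z|^2$... more precisely $\beta^2(z,z)\simeq|z|^2\Psi'(|z|^2)+|z|^2\Phi'(|z|^2)\simeq|z|^2\Phi'(|z|^2)$, the infinitesimal metric at $z$ is comparable to the quadratic form defining $D(z,\cdot)$ after the rescaling $w\mapsto z+u[\Phi'(|z|^2)]^{-1/2}e_1+v[\Psi'(|z|^2)]^{-1/2}$. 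The point of the two inclusions is then that along a path staying inside $D(z,M)$ the coefficients $\Psi'$, $\Psi''$, $\Phi'$ do not change by more than a bounded factor (this is exactly where Lemmas~\ref{smooth} and \ref{smooth2} are used, with the scales $[\Phi'(|z|^2)]^{-1/2}$ and $[\Psi'(|z|^2)]^{-1/2}$ lying well within the allowed windows $|z|[\Phi'(|z|^2)]^{-\alpha}$, $|z|[\Psi'(|z|^2)]^{-\alpha}$ since $\alpha<1/2$).

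The two inclusions are then proved separately. For $B(z,a)\subset D(z,M)$: given $w$ with $\varrho(z,w)<a$, take a nearly minimizing curve $\gamma$ from $z$ to $w$; I would argue that as long as $\gamma$ stays in $D(z,M_0)$ for a suitable large $M_0$ the integrand $\beta(\gamma(t),\gamma'(t))$ is bounded below by a fixed constant times the ``frozen'' quadratic form evaluated at the displacement, whence the Bergman length of the portion of $\gamma$ needed to exit $D(z,M)$ exceeds $a$ once $M$ is large enough; a standard continuity/first-exit-time argument then forces $w\in D(z,M)$. (One must also handle the bounded range $|z|\le R_0$, where $K_\Psi(z,z)$ is a fixed smooth positive function and all quantities are comparable on compacta, so the inclusions hold trivially after adjusting $m,M$.) For $D(z,m)\subset B(z,a)$: given $w\in D(z,m)$, write down an explicit competitor curve — essentially the concatenation of a radial segment and a transverse segment in the adapted frame, much as the three-arc curve $\gamma_1,\gamma_2,\gamma_3$ used in Section~5 — and bound its Bergman length directly. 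Using Theorem~B and freezing the coefficients along this short curve, the radial segment contributes $\lesssim m[\Phi'(|z|^2)]^{-1/2}\cdot[\Phi'(|z|^2)]^{1/2}=O(m)$ and the transverse segment contributes $\lesssim m[\Psi'(|z|^2)]^{-1/2}\cdot[\Psi'(|z|^2)]^{1/2}=O(m)$, so $\varrho(z,w)=O(m)<a$ for $m$ small.

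The main obstacle I expect is the uniformity in $z$ — in particular, making the freezing arguments legitimate uniformly over all $z$ with $|z|$ large, and controlling the transition region between the compact range $|z|\le R_0$ and the asymptotic range. The delicate point is that the ``$1+o(1)$'' in Theorem~B is only asserted as $|z|\to\infty$, so for the first-exit argument in the inclusion $B(z,a)\subset D(z,M)$ one needs that the exit from $D(z,M)$ happens while $\gamma$ is still in a region where $|\gamma(t)|$ is comparable to $|z|$ (so that the asymptotics of Theorem~B still apply with the same $z$-independent constants); this is true because $D(z,M)$ has Euclidean diameter $O(M[\Psi'(|z|^2)]^{-1/2})=o(|z|)$, but it has to be spelled out. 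A secondary technical nuisance is that $P_z$ is only defined for $z\neq 0$ and the frame depends on $z$; I would dispose of $|z|\le R_0$ at the very start by the compactness remark above, and then work exclusively with $|z|$ large, where everything is controlled by Theorem~B and Lemmas~\ref{smooth}, \ref{smooth2}.
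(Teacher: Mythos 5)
Your upper-bound inclusion $D(z,m)\subset B(z,a)$ (explicit radial-plus-transverse competitor curve, Theorem~B, freezing of coefficients) matches the paper's argument, which proves $\varrho(z,w)\lesssim |z-P_zw|[\Phi'(|z|^2)]^{1/2}+|w-P_zw|[\Psi'(|z|^2)]^{1/2}$ for $w\in D(z,M)$ in exactly this way.

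For the other inclusion $B(z,a)\subset D(z,M)$ you propose a first-exit argument, whereas the paper directly bounds $\varrho(z,w)$ from below. Either framing can be made to work, but your sketch skips the point where the paper does its real work, and as written there is a genuine gap. You claim that along a curve staying in $D(z,M)$ ``the coefficients $\Psi'$, $\Psi''$, $\Phi'$ do not change by more than a bounded factor,'' attributing this to Lemmas~\ref{smooth} and \ref{smooth2}; but those lemmas control only $\Phi'$ and $\Psi'$ on the relevant scale, and more importantly, stability of the scalar coefficients is not the issue. The quadratic form $\beta^2(\zeta,\xi)\simeq|\xi|^2\Psi'(|\zeta|^2)+|\langle\zeta,\xi\rangle|^2\Psi''(|\zeta|^2)$ is strongly anisotropic, and the \emph{direction} penalized by the large weight rotates with $\zeta=\gamma(t)$. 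If $\xi$ is transverse to $\gamma(t)$ but not to $z$ (or vice versa), the frozen form $\beta_z(\xi)$ and the true $\beta_{\gamma(t)}(\xi)$ need not be pointwise comparable — the anisotropic coefficient $|\zeta|^2\Psi''(|\zeta|^2)\simeq\Phi'(|\zeta|^2)-\Psi'(|\zeta|^2)$ can dwarf $\Psi'(|\zeta|^2)$, so even a small angular misalignment is not obviously harmless. Making this harmless requires a quantitative input beyond the smoothness lemmas, namely something like $\Psi''(|z|^2)|\gamma(t)-z|^2=o(\Psi'(|z|^2))$ inside $D(z,M)$, which is where the relation $\Psi''=o([\Psi']^2)$ (a consequence of \eqref{basic2} with $\eta<1/2$) enters crucially.

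The paper's proof handles precisely this. It first derives the a priori bound $\ell(\gamma)\lesssim[\Psi'(|z|^2)]^{-1/2}\varrho(z,w)$ for near-minimizing curves (display~\eqref{boundforrho}), then splits $\gamma=\gamma_0+\gamma_1$ with $\gamma_0=P_z\gamma$, so that by orthogonality $\langle\gamma,\gamma'\rangle=\langle\gamma_0,\gamma_0'\rangle+\langle\gamma_1,\gamma_1'\rangle$; the $\gamma_0$-part gives the main contribution $|z-P_zw|\,|z|[\Psi''(|z|^2)]^{1/2}$, while the cross-term $\gamma_1$-contribution is bounded by $[\ell(\gamma)]^2\,o(\Psi'(|z|^2))=o(1)\varrho(z,w)$. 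This decomposition and the length bound \eqref{boundforrho} constitute the actual content of the lower bound, and your sketch needs to supply an equivalent in order to justify that the integrand along a near-geodesic inside $D(z,M)$ dominates the frozen form. Your remarks on uniformity in $z$ and the compact range $|z|\le R_0$ are sound and align with what must be done.
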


\begin{proof}
It suffices to prove that \begin{equation}\label{suffice}
\varrho(z,w)\simeq |z-P_z w| [\Phi'(|z|^2)]^{1/2}+|w-P_z w|
[\Psi'(|z|^2)]^{1/2}\end{equation} for $w$ in $D(z,M)$ for any fixed
positive number $M$. (The latter term vanishes and can be
disregarded when $n=1$.) To begin with, we note that Theorem~B gives
that
\begin{equation}\label{firstvarrho} \varrho(z,w)\simeq \inf_{\gamma} \int_{0}^1
\left( |\gamma'(t)|[\Psi'(|\gamma(t)|^2)]^{1/2}+|\langle
\gamma(t),\gamma'(t)\rangle| [\Psi''(|\gamma(t)|^2)]^{1/2}\right)dt,
\end{equation} where the infimum is taken over all piecewise smooth curves
$\gamma: [0,1]\to \cn$ such that $\gamma(0)=z$ and $\gamma(1)=w$. If
we choose $\gamma$ to be the line segment from $z$ to $P_z w$
followed by the line segment from $P_z w$ to $w$ and use that
$\Psi''(x)=o([\Psi'(x)]^{1/2})$ on the latter part of $\gamma$, we
get from \eqref{firstvarrho} that
\[ \varrho(z,w)\lesssim  |z-P_z w|[\Phi'(|z|^2)]^{1/2} \\
+|P_z w-w| [\Psi'(|z|^2)]^{1/2}+|P_z w -w|^2 o(\Psi'(|z|^2)). \]
This gives the desired bound from above because, by assumption,
$|P_z w-w|\le M [\Psi'(|z|^2)]^{-1/2}$.

To prove the bound from below, we argue in the following way. Let
$\ell(\gamma)$ denote the Euclidean length of $\gamma$. Set
\[\varrho^*_{\gamma}(z,w)=\int_{0}^1
\left( |\gamma'(t)|[\Psi'(|\gamma(t)|^2)]^{1/2}+|\langle
\gamma(t),\gamma'(t)\rangle|
[\Psi''(|\gamma(t)|^2)]^{1/2}\right)dt\] and
$\varrho^*(z,w)=\inf_{\gamma}\varrho^*_{\gamma}(z,w)$. We observe
that \eqref{firstvarrho} implies that
\begin{equation}\label{frabove} \varrho(z,w) \gtrsim \inf_{t}
[\Psi'(|\gamma(t)|^2)]^{1/2} \ell(\gamma)\end{equation} whenever,
say, $\varrho^*_{\gamma}(z,w)\le 2 \varrho^*(z,w)$. Since we know by
the first part of the proof that $\varrho(z,w)\lesssim 1$, this
implies that
\[ \ell(\gamma) \lesssim \inf_{t} [\Psi'(|\gamma(t)|^2)]^{-1/2}.\]
By Lemma~\ref{smooth2}, we therefore have
\[ \ell(\gamma) \lesssim [\Psi'(|z|^2)]^{-1/2},\]
which, in view of \eqref{frabove}, in turn gives
\begin{equation}\label{boundforrho} \ell(\gamma) \lesssim [\Psi'(|z|^2)]^{-1/2}
\varrho(z,w).\end{equation}

Now let $\gamma$ be any curve such that $\varrho^*_{\gamma}(z,w)\le
2 \varrho^*(z,w)$. We then get from \eqref{firstvarrho} that
\begin{equation}\label{frbelow} \varrho(z,w) \gtrsim
|z-w|[\Psi'(|z|^2)]^{1/2} +\int_0^1 |\langle
\gamma(t),\gamma'(t)\rangle|[\Psi''(|\gamma(t)|^2)]^{1/2} dt.
\end{equation} Set $\gamma_0(t)=P_z(\gamma(t))$ and $\gamma_1(t)=\gamma(t)-\gamma_0(t)$.
Note that $\gamma_1(0)=0$ and that $\ell(\gamma_1)\le \ell(\gamma)$.
By orthogonality and the triangle inequality, we get \[ \aligned
\int_0^1 |\langle
\gamma(t),\gamma'(t)\rangle|[\Psi''(|\gamma(t)|^2)]^{1/2} dt& \ge \
\int_0^1 | \gamma_0(t)| |\gamma_0'(t)|
[\Psi''(|\gamma_0(t)|^2)]^{1/2} dt
\\ & \ \ \ \ \ \ \ \ \ \ - \int_{0}^1|\langle
\gamma_1(t),\gamma_1'(t)\rangle|[\Psi''(|\gamma(t)|^2)]^{1/2}
dt.\endaligned\] Let $t_1$ be the smallest $t$ such that
$|z-\gamma_0(t)|=|z-P_z w|$. Using that $\Psi''(x)=o([\Psi'(x)]^2)$
and \eqref{boundforrho}, we then get
\[ \aligned \int_0^1 |\langle \gamma(t),\gamma'(t)\rangle|[\Psi''(|\gamma(t)|^2)]^{1/2} dt
 & \ge   (1+o(1))\int_0^{t_1} |z| |\gamma_0'(t)| [\Psi''(|z|^2)]^{1/2}
dt - [\ell(\gamma)]^2 o(\Psi'(|z|^2)) \\
& \gtrsim |z-P_zw| |z| [\Psi''(|z|^2)]^{1/2}- o(1)
\varrho(z,w)\endaligned\] when $|z|\to \infty$. Plugging this
estimate into \eqref{frbelow}, we obtain the desired bound from
below.
\end{proof}
It follows from the previous lemma that the Euclidean volume of
$B(z, r)$ can be estimated as
\begin{equation}\label{ball-vol} |B(z, r)| \simeq
[\Phi'(|z|^2)]^{-1/2}[\Psi'(|z|^2)]^{(n-1)/2}
\end{equation}
when $r$ is a fixed positive number. We will now use this fact to
establish two covering lemmas.
\begin{lemma}\label{covering1}
Suppose that there exists a real number $\eta<1/2$ such that
\eqref{basic} holds and that \eqref{basic2} holds if $n>1$. Let $R$
be a positive number and $m$ a positive integer. Then there exists a
positive integer $N$ such that every Bergman ball $B(a, r)$ with
$r\le R $ can be covered by $N$ Bergman balls $B(a_k, \frac{r}{m})$.
\end{lemma}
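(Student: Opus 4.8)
The plan is to run a standard packing argument: cover $B(a,r)$ by the balls centered at a maximal $(r/m)$-separated subset of $B(a,r)$, and bound the number of centers by a volume count, using the geometric description of Bergman balls from Lemma~\ref{bergmannball} together with the volume estimate \eqref{ball-vol}.

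First I would fix $a\in\cn$ and $r\le R$ and choose a maximal subset $\{a_1,\dots,a_N\}$ of $B(a,r)$ subject to $\varrho(a_j,a_k)\ge r/m$ for $j\ne k$; such a set is finite because of the volume bound obtained below. By maximality, every point of $B(a,r)$ lies within Bergman distance $r/m$ of some $a_k$, so the balls $B(a_k,r/m)$ cover $B(a,r)$, and it remains only to bound $N$ by a constant depending on $R$, $m$, and $n$. The balls $B(a_k,r/(2m))$ are pairwise disjoint by the triangle inequality and are all contained in $B(a,2r)$; hence
\[ N\,\min_{1\le k\le N}\bigl|B(a_k,r/(2m))\bigr|\le|B(a,2r)|, \]
and it suffices to bound the ratio $|B(a,2r)|\big/\min_k|B(a_k,r/(2m))|$.

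For that I would use two uniformity facts, both resting on \eqref{basic} (and on \eqref{basic2} when $n>1$). The first is that $\Phi'$ and $\Psi'$ are slowly varying along the diagonal: if $w$ lies in a Bergman ball of fixed radius about $z$, then Lemma~\ref{bergmannball} forces $\bigl||w|^2-|z|^2\bigr|$ to be of order at most $|z|\,[\Phi'(|z|^2)]^{-1/2}$ (plus lower-order terms), which lies well within the range covered by Lemma~\ref{smooth} and Lemma~\ref{smooth2}; hence $\Phi'(|w|^2)\simeq\Phi'(|z|^2)$ and $\Psi'(|w|^2)\simeq\Psi'(|z|^2)$, and with \eqref{ball-vol} this gives $|B(w,s)|\simeq|B(z,s)|$ for every fixed $s$, uniformly --- in particular $|B(a_k,r/(2m))|\simeq|B(a,r/(2m))|$ since each $a_k\in B(a,R)$. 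The second is the dependence of $|B(z,s)|$ on the radius $s$: for $s$ ranging over a fixed interval bounded away from $0$, \eqref{ball-vol} already gives $|B(z,s_1)|\simeq|B(z,s_2)|$; and for small $s$, the estimates in the proof of Lemma~\ref{bergmannball} show that $B(z,s)$ is trapped between the ellipsoids $D(z,c_1 s)$ and $D(z,c_2 s)$ with \emph{absolute} constants $c_1,c_2$ (the error terms there are of lower order and disappear as $s\to0$), so that $|B(z,s)|$ is comparable to $s^{2n}$ times a quantity not depending on $s$. Combining these, $|B(a,2r)|/|B(a,r/(2m))|\lesssim_{R,m}1$: for $r$ bounded below this is the bounded-interval case, while for $r\to0$ the ratio is comparable to $(2r)^{2n}/(r/(2m))^{2n}=(4m)^{2n}$. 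Hence $N\lesssim_{R,m,n}1$, which is what we want.

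I expect the main obstacle to be exactly this uniformity in $r$ across the full range $(0,R]$: one must know that a small Bergman ball scales like a genuine ellipsoid with absolute constants, so that it and the $m$-times-smaller ball have a bounded covering number; this is why I would lean on the quantitative content of the proof of Lemma~\ref{bergmannball} (and of Theorem~B) rather than only on their statements. The slow variation of $\Phi'$ and $\Psi'$ and the packing step itself are routine.
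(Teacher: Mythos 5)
Your argument is essentially the paper's proof: both bound the size of a maximal $(r/m)$-separated net in $B(a,r)$ by a packing count, using the pairwise disjointness of the $B(a_j,r/(2m))$ inside a slightly enlarged ball and the volume estimate \eqref{ball-vol}. The paper builds the net greedily and stops at the first step that leaves $B(a,r)$, but this is the same idea as taking a maximal separated set. The one place you add substance is the discussion of uniformity of the volume comparison over $r\in(0,R]$ — the paper simply asserts the existence of a constant $C=C(R,m)$ citing \eqref{ball-vol}, which as stated only covers fixed $r$; your elaboration (slow variation of $\Phi'$ and $\Psi'$ over a Bergman ball via Lemmas~\ref{smooth} and \ref{smooth2}, plus the fact that the proof of Lemma~\ref{bergmannball} gives $B(z,s)$ pinched between $D(z,c_1s)$ and $D(z,c_2s)$ with constants depending only on $R$) is exactly what is needed to justify that uniformity. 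So the approach is the same; your writeup is slightly more explicit at the step the paper glosses over.
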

\begin{proof} Fix a ball $B(a, r)$. Choose $a_0:= a$ and let $a_1$ be a point in $\cn$ such
that $\varrho(a,a_1)=r/m$. Now iterate so that in the $k$-th step
$a_k$ is chosen as a point in the complement of
$\bigcup_{j=1}^{k-1}B(a_j,r/m)$ minimizing the distance from $a$,
and let $J$ be the smallest $k$ such that $\varrho(a,a_k)\ge r$.
Then the balls $B(a_0,r/m),...,B(a_{J-1},r/m)$ constitute a covering
of $B(a,r)$. By the triangle inequality, we see that the sets
$B(a_j, r/(2m))$ are mutually disjoint, and they are all contained
in $B(a, r + r/(2m))$ when $j<J$. Hence
$$\sum_{j=0}^{J-1}
\left|B(a_j, r/(2m))\right| \leq \left|B(a, r+ r/(2m))\right|.$$ On
the other hand,  by  (\ref{ball-vol}), it follows  that there is a
positive number $C$ depending on $R$ and $m$ but not on $a$ such
that
$$ \frac{1}{C}  \left|B(a, r+ r/(2m))\right| \leq
\left|B(a_j, r/(2m))\right| $$ for every $j$.  We observe that it
suffices to take $N$ to be the smallest  positive integer larger
than or equal to $C$.
  \end{proof}
Inspired by the construction in the previous lemma, we introduce the
following notion. We say that a sequence of distinct points $(a_k)$
in $\cn$ is a $\Psi$-lattice if the there exists a positive number
$r$ such that the balls $B(a_k,r)$ constitute a covering of $\cn$
and the balls $B(a_k,r/2)$ are mutually disjoint. Replacing $a$ by,
say, $0$ and $r/m$ by $r$ in the previous proof, we have a
straightforward way of constructing a $\Psi$-lattice. Note that
since the balls $B(a_k,r/2)$ are mutually disjoint, we must have
$\varrho(a_k,a_j)\ge r$ when $k\neq j$. The number $r$, which may
fail to be unique, is called a covering radius for the
$\Psi$-lattice $(a_k)$. The supremum of all the covering radii is
again a covering radius; it will be called the maximal covering
radius for $(a_k)$.
\begin{lemma}\label{covering2}
Suppose that there exists a real number $\eta<1/2$ such that
\eqref{basic} holds and that \eqref{basic2} holds if $n>1$, and let
$R$ be a positive number. Then there exists a positive integer $N$
such that if $(a_k)$ is a $(\Psi)$-lattice with maximal covering
radius $r\le R/2$, then every point $z$ in $\cn$ belongs to at most
$N$ of the sets $B(a_k, 2r)$.
\end{lemma}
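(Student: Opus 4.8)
The plan is to show that a point $z$ can lie in $B(a_k,2r)$ for only boundedly many $k$ by a volume-packing argument, exactly analogous to the proof of Lemma~\ref{covering1}. First I would fix $z\in\cn$ and let $\mathcal{J}=\{k:\ z\in B(a_k,2r)\}$. For $k\in\mathcal{J}$ the triangle inequality gives $\varrho(z,a_k)<2r$, hence $\varrho(z,a_j)\le \varrho(z,a_k)+\varrho(a_k,a_j)$ is not directly what I want; instead I use that each $a_k$ with $k\in\mathcal{J}$ satisfies $a_k\in B(z,2r)$, and therefore the disjoint balls $B(a_k,r/2)$ (disjoint because $(a_k)$ is a $\Psi$-lattice with covering radius $r$) are all contained in $B(z, 2r+r/2)\subset B(z,5R/2)$ since $r\le R/2$. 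This yields
\[
\sum_{k\in\mathcal{J}} |B(a_k,r/2)| \le |B(z,5R/2)|.
\]

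The next step is to compare the volumes $|B(a_k,r/2)|$ with $|B(z,5R/2)|$ using the geometric description of Bergman balls. By \eqref{ball-vol}, for any point $p$ and any fixed radius $\rho$ we have $|B(p,\rho)|\simeq [\Phi'(|p|^2)]^{-1/2}[\Psi'(|p|^2)]^{(n-1)/2}$, with the implied constants depending only on $\rho$ (and on $n$, $\Psi$); here I must be slightly careful, since the radii $r/2$ and the cutoffs are not literally fixed but range over $(0,R/2]$, so I would first observe that $|B(p,r/2)| \gtrsim |B(p,\rho_0)|$ for a fixed small $\rho_0$ once $r$ is bounded below — and if $r$ is tiny the lemma is easier, or one replaces $r/2$ by $\min(r/2,\rho_0)$ throughout, noting the disjointness of $B(a_k,r/2)$ still holds. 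The essential point, which I would extract from Lemma~\ref{bergmannball} together with Lemma~\ref{smooth} and Lemma~\ref{smooth2}, is that $[\Phi'(|a_k|^2)]^{-1/2}[\Psi'(|a_k|^2)]^{(n-1)/2}\simeq [\Phi'(|z|^2)]^{-1/2}[\Psi'(|z|^2)]^{(n-1)/2}$ whenever $\varrho(z,a_k)\le 2r\le R$: indeed Lemma~\ref{bergmannball} forces $|z-P_z a_k|\lesssim [\Phi'(|z|^2)]^{-1/2}$ and $|a_k-P_z a_k|\lesssim [\Psi'(|z|^2)]^{-1/2}$, and these displacements are too small to change $\Phi'(|\cdot|^2)$ or $\Psi'(|\cdot|^2)$ by more than a constant factor, by the two smoothness lemmas. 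Consequently $|B(a_k,r/2)|\gtrsim \frac{1}{C}|B(z,5R/2)|$ for a constant $C$ depending only on $R$, $n$, $\Psi$.

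Combining the last two displays gives $|\mathcal{J}|\cdot \frac{1}{C}|B(z,5R/2)| \le |B(z,5R/2)|$, hence $|\mathcal{J}|\le C$, and one takes $N=\lceil C\rceil$. The main obstacle I anticipate is purely bookkeeping: making sure the comparison $|B(a_k,r/2)|\simeq|B(z,5R/2)|$ is uniform over all $\Psi$-lattices with $r\le R/2$ and over all $z$, i.e.\ that all implied constants genuinely depend only on $R$ (and $n$, $\Psi$) and not on the lattice or on $z$. This is handled by noting that \eqref{ball-vol} is uniform in the base point, that Lemma~\ref{bergmannball} is uniform in $z$, and that Lemma~\ref{smooth}/\ref{smooth2} give the needed uniform slow variation of $\Phi'$ and $\Psi'$; no genuinely new estimate is required beyond what has already been established.
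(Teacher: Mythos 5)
Your argument redoes the volume-packing estimate from scratch rather than invoking Lemma~\ref{covering1}, and as written it has a real gap in the step where you compare volumes. You claim $|B(a_k,r/2)|\gtrsim \frac{1}{C}|B(z,5R/2)|$ with $C$ depending only on $R$ (and $n$, $\Psi$). This cannot be uniform in $r\in(0,R/2]$: as $r\to 0$ the left side tends to $0$ while the right side is bounded away from $0$, since $5R/2$ is fixed. The proposed repair --- replacing $r/2$ by $\min(r/2,\rho_0)$ --- does nothing when $r$ is small, because then $\min(r/2,\rho_0)=r/2$; and the parenthetical ``if $r$ is tiny the lemma is easier'' is an assertion, not an argument. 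The correct comparison keeps the outer ball proportional to $r$: the disjoint balls $B(a_k,r/2)$ lie in $B(z,5r/2)$, and it is the ratio $|B(z,5r/2)|/|B(a_k,r/2)|$ that is uniformly bounded. Even that requires a slight refinement of \eqref{ball-vol}, because \eqref{ball-vol} as stated fixes the radius and buries its dependence in the implied constants; one has to extract from Lemma~\ref{bergmannball} and the smoothness Lemmas~\ref{smooth} and \ref{smooth2} that Bergman balls of comparable radius about nearby centers have comparable volume, uniformly over the admissible range of radii.

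The paper avoids repeating this bookkeeping by using Lemma~\ref{covering1} as a black box together with a pigeonhole argument: take $N$ from Lemma~\ref{covering1} with $m=4$, so $B(z,2r)$ is covered by $N$ balls $B(z_j,r/2)$; if $z$ lay in $N+1$ of the sets $B(a_{k_j},2r)$, then the $N+1$ lattice points $a_{k_j}$ would all lie in $B(z,2r)$, two of them would share a covering ball $B(z_j,r/2)$, and by the triangle inequality they would be at Bergman distance $<r$, contradicting the separation built into a $\Psi$-lattice. This way the delicate variable-radius volume comparison is done once, in the proof of Lemma~\ref{covering1}, rather than redone here. If you prefer your direct packing route, you should replace $B(z,5R/2)$ by $B(z,5r/2)$ and justify the uniform volume comparison for radii that shrink with $r$; otherwise, just invoke Lemma~\ref{covering1}.
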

\begin{proof} Let $N$ be the
integer obtained from Lemma~\ref{covering1} for the given $R$ when
$m=4$ and assume that $z\in \bigcap_{j=1}^{N+1} B(a_{k_{j}}, 2r)$.
Then $a_{k_{j}}$ is in $B(z, 2r)$ for every $j=1, \cdots, N+1$. If
the sets $B(z_{1}, r/2), ...,$ $B(z_{N},r/2)$ constitute a covering
of $B(z, 2r)$, the existence of which is guaranteed by
Lemma~\ref{covering1}, then at least one of the sets $B(z_{k}, r/2)$
must contain two of the points $a_{k_{j}}, j=1, \cdots, N+1$. On the
other hand, by the triangle inequality, we have reached a
contradiction because the minimal distance between any two points in
the sequence $(a_k)$ can not be smaller than $r$.
\end{proof}

 \section{Carleson measures and  Toeplitz operators }

For a nonnegative Borel measure $\nu$ on $\cn$, we set
\[ d\nu_{\Psi}(z)=e^{-\Psi(|z|^2)}d\nu(z).\]
Such a measure $\nu$ is called a Carleson measure for ${\mathcal
A}^2(\Psi)$ if there is a positive constant $C$ such that
$$\int_{\cn} |f(z)|^2 d\nu_{\Psi}(z) \leq C \int_{\cn} |f(z)|^2 d\mu_\Psi(z)$$
for every function $f$ in ${\mathcal A}^2(\Psi)$. Thus $\nu$ is a
Carleson measure for $\A$ if and only if the embedding $E_\nu$ of
$\A$ into the space $L^2(\nu_{\Psi})$ is bounded.

\newtheorem*{thD}{\bf Theorem D}
\begin{thD}  Let $\Psi$ be a logarithmic growth function, and
suppose that there exists a real number $\eta<1/2$ such that
\eqref{basic} holds and that \eqref{basic2} holds if $n>1$. If $
\nu$ is a nonnegative Borel measure on $\cn$, then the following
statements are equivalent:
\begin{itemize}
\item[(i)] $\nu$ is a Carleson measure for $\A$;
\item[(ii)] There is a constant $C>0$ such that
\[ \int_{\cn}  \frac{|K_{\Psi}(w,z)|^{2}}{K(z,z)} d\nu_{\Psi}(w) \leq C \]
for every $z$ in $\cn$; \item[(iii)] For every positive number $r$,
there is a positive number $C$ such that \[\nu( B(z, r)) \leq C
|B(z, r)|\] for every $z$ in $\cn$; \item[(iv)] There exist a
$\Psi$-lattice $(a_k)$ and a positive number $C$ such that
$$\nu( B(a_k, r)) \leq C |B(a_k, r)|$$
for every point $k$, where $r$ is the maximal covering radius for
$(a_k)$.
\end{itemize}
\end{thD}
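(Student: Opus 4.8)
The plan is to prove the cycle of implications (i) $\Rightarrow$ (ii) $\Rightarrow$ (iii) $\Rightarrow$ (iv) $\Rightarrow$ (i), which is the standard strategy for characterizing Carleson measures in a Bergman/Fock setting, adapted here to our weight $\Psi$ via the geometric results of Section~7.

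\textbf{Sketch of the four implications.}
First I would establish (i) $\Rightarrow$ (ii) by testing the Carleson inequality on the normalized reproducing kernels $f_z := K_\Psi(\cdot,z)/\sqrt{K_\Psi(z,z)}$, which have unit norm in $\A$; the left-hand side of the inequality in (ii) is exactly $\|E_\nu f_z\|_{L^2(\nu_\Psi)}^2$, so boundedness of $E_\nu$ forces the uniform bound. For (ii) $\Rightarrow$ (iii), I would fix $r>0$ and integrate over $B(z,r)$ only; the point is that $|K_\Psi(w,z)|^2/K_\Psi(z,z) \gtrsim K_\Psi(z,z)$ uniformly for $w\in B(z,r)$. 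This is precisely the content of the lower bound in Lemma~\ref{lemmah} together with Theorem~B: on a Bergman ball of fixed radius $\langle w,z\rangle$ stays close to $|z|^2$ in modulus and argument (this is essentially what Lemma~\ref{bergmannball} says, since $B(z,r)\subset D(z,M)$ pins down $|z-P_zw|$ and $|w-P_zw|$), so $|K_\Psi(w,z)| \gtrsim \Phi'(|z|^2)[\Psi'(|z|^2)]^{n-1}e^{\Psi(|z|^2)}$; combined with the diagonal asymptotics $K_\Psi(z,z)\simeq \Phi'(|z|^2)[\Psi'(|z|^2)]^{n-1}e^{\Psi(|z|^2)}$ coming from Lemma~\ref{lemmah} (case $\theta=0$) we get $|K_\Psi(w,z)|^2/K_\Psi(z,z)\gtrsim K_\Psi(z,z)\gtrsim |B(z,r)|^{-1}$ by \eqref{ball-vol}. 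Hence $\nu(B(z,r)) \lesssim |B(z,r)| \int |K_\Psi(w,z)|^2/K_\Psi(z,z)\, d\nu_\Psi(w)\le C|B(z,r)|$. The implication (iii) $\Rightarrow$ (iv) is trivial: a $\Psi$-lattice consists of Bergman balls of a fixed radius.

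\textbf{The main implication (iv) $\Rightarrow$ (i).}
This is where the covering lemmas of Section~7 do the work, and it is the step I expect to be the main obstacle. Starting from a $\Psi$-lattice $(a_k)$ with covering radius $r$ satisfying the estimate in (iv), I would fix $f\in\A$ and write, using that the balls $B(a_k,r)$ cover $\cn$,
\[ \int_{\cn} |f|^2\, d\nu_\Psi \le \sum_k \int_{B(a_k,r)} |f|^2\, d\nu_\Psi \le \sum_k \Big(\sup_{B(a_k,r)} |f|^2 e^{-\Psi(|\cdot|^2)}\Big)\,\nu(B(a_k,r)). \]
The crucial sub-estimate is a submean-value property: for $w\in B(a_k,r)$,
\[ |f(w)|^2 e^{-\Psi(|w|^2)} \lesssim \frac{1}{|B(a_k,2r)|}\int_{B(a_k,2r)} |f(u)|^2 e^{-\Psi(|u|^2)}\, dV(u), \]
with the implied constant depending only on $r$ (and $\Psi$). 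I would derive this by pulling $B(a_k,2r)$ back to a fixed polydisk-like region via the change of variables adapted to $D(a_k,M)$, applying the ordinary plurisubharmonic submean inequality to the holomorphic function $f$ against a weight that is comparable to a constant on $B(a_k,2r)$ — here one uses again that $\Psi(|u|^2)$ varies by $O(1)$ on a fixed Bergman ball, a consequence of Theorem~B and Lemma~\ref{bergmannball} — and then using \eqref{ball-vol} to replace the volume of the pulled-back region by $|B(a_k,2r)|$. Granting this, combining with (iv) gives
\[ \int_{\cn}|f|^2\, d\nu_\Psi \lesssim \sum_k \frac{\nu(B(a_k,r))}{|B(a_k,2r)|}\int_{B(a_k,2r)}|f|^2\, d\mu_\Psi \lesssim \sum_k \int_{B(a_k,2r)} |f|^2\, d\mu_\Psi, \]
since $|B(a_k,r)|\simeq|B(a_k,2r)|$ by \eqref{ball-vol}. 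Finally, Lemma~\ref{covering2} tells us the balls $B(a_k,2r)$ have bounded overlap, so the last sum is $\lesssim \int_{\cn}|f|^2\, d\mu_\Psi$, which is the Carleson inequality.

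\textbf{Where the difficulty lies.}
The routine bookkeeping is negligible; the real content is the uniform submean-value inequality on Bergman balls of fixed radius, and within it the fact that the Euclidean geometry of these balls is uniformly controlled (ellipsoids with axes $[\Phi'(|z|^2)]^{-1/2}$ and $[\Psi'(|z|^2)]^{-1/2}$) and that the weight $e^{-\Psi(|z|^2)}$ oscillates by only a bounded factor across such a ball. All of this is supplied by Theorem~B, Lemma~\ref{bergmannball}, the volume formula \eqref{ball-vol}, and the smoothness lemmas \ref{smooth}, \ref{smooth2}; the only care needed is to make the change of variables explicit enough that the plurisubharmonic submean inequality for $f$ can be applied with constants independent of $k$.
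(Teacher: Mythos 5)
The overall architecture of your proof (the cycle (i) $\Rightarrow$ (ii) $\Rightarrow$ (iii) $\Rightarrow$ (iv) $\Rightarrow$ (i), with the covering lemmas doing the heavy lifting in the last step) is the same as the paper's, and (i) $\Rightarrow$ (ii) and (iii) $\Rightarrow$ (iv) are fine. But the two nontrivial implications contain a common, genuine error.

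The central false claim is that ``$\Psi(|u|^2)$ varies by $O(1)$ on a fixed Bergman ball.'' This fails already for the classical Fock space: there $\Psi(x)=x$, $\Phi'\equiv \text{const}$, so $B(z,1)$ has Euclidean radius comparable to $1$, and on it $|u|^2-|z|^2$ is of size $|z|$, which is unbounded. In general the radial semi-axis of $B(z,r)$ is $\simeq [\Phi'(|z|^2)]^{-1/2}$ and the resulting variation of $\Psi(|u|^2)$ is $\simeq \Psi'(|z|^2)\,|z|\,[\Phi'(|z|^2)]^{-1/2} \le \sqrt{\Phi(|z|^2)}\to\infty$. So the weight $e^{-\Psi(|\cdot|^2)}$ is \emph{not} comparable to a constant on a Bergman ball, and the plurisubharmonic submean inequality cannot be applied to $|f|^2$ against that weight directly. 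This invalidates your proposed proof of the key submean inequality in the (iv) $\Rightarrow$ (i) step. The same slip appears in (ii) $\Rightarrow$ (iii): you write $|K_\Psi(w,z)|^2/K_\Psi(z,z)\gtrsim K_\Psi(z,z)$, but the near-diagonal estimate gives $|K_\Psi(w,z)|^2\simeq K_\Psi(z,z)K_\Psi(w,w)$ and $K_\Psi(w,w)$ differs from $K_\Psi(z,z)$ by the unbounded factor $e^{\Psi(|w|^2)-\Psi(|z|^2)}$; with that factor dropped, your chain bounds $\nu_\Psi(B(z,r))$ rather than $\nu(B(z,r))$.

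The idea you are missing is exactly the one the paper isolates as Lemma~\ref{localball}: what \emph{is} slowly varying on a Bergman ball is not $e^{-\Psi(|w|^2)}$ but $|K_\Psi(z,w)|^2 e^{-\Psi(|z|^2)-\Psi(|w|^2)}$, equivalently $|K_\Psi(z,w)|^2/\bigl(K_\Psi(z,z)K_\Psi(w,w)\bigr)\simeq 1$ for $\varrho(z,w)\le r_0$. The correct near-diagonal bound to use in (ii) $\Rightarrow$ (iii) is therefore $\dfrac{|K_\Psi(z,w)|^2}{K_\Psi(z,z)}\,e^{-\Psi(|w|^2)}\simeq K_\Psi(w,w)e^{-\Psi(|w|^2)}\simeq |B(z,r)|^{-1}$, not a bound without the weight. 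And the submean inequality (the paper's Lemma~\ref{meanv}) must be proved by noting that $w\mapsto K_\Psi(z,w)$ is non-vanishing on $B(z,r_0)$, so that $w\mapsto |f(w)|^2|K_\Psi(z,w)|^{-2}$ is subharmonic; applying the mean-value inequality to \emph{that} function over the polydisk $D(z,m)$, and only afterwards converting kernels back into $e^{-\Psi}$ via Lemma~\ref{localball} and Lemma~\ref{lemmah}, produces the inequality with a uniform constant. Without the $|K_\Psi(z,\cdot)|^{-2}$ factor, there is no subharmonic function in sight whose mean values control $|f(z)|^2 e^{-\Psi(|z|^2)}$, and the argument breaks down.
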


We prepare for the proof of Theorem~D by establishing the following
two lemmas.
\begin{lemma}\label{localball}
Suppose that there exists a real number $\eta<1/2$ such that
\eqref{basic} holds and that \eqref{basic2} holds if $n>1$. Then
there exists a positive number $r_0$ such that
\[ |K_{\Psi}(z,w)|^2\simeq K(z,z)K(w,w) \]
holds for $z$ and $w$ whenever $\varrho(z,w)\le r_0$.
\end{lemma}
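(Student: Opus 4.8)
The plan is to obtain the upper bound for free and to work for the lower one. The inequality $|K_\Psi(z,w)|^2\le K_\Psi(z,z)K_\Psi(w,w)$ is nothing but the Cauchy--Schwarz inequality applied to the identity $K_\Psi(z,w)=\langle K_\Psi(\cdot,w),K_\Psi(\cdot,z)\rangle$, and it holds for all $z,w$; so only the reverse estimate has to be produced, for a suitably small $r_0$. First I would record the diagonal asymptotics obtained by putting $z=w$ in Lemma~\ref{lemmah} (so that $\langle z,z\rangle=|z|^2$ and $\theta=0$):
\[ K_\Psi(z,z)\simeq\Phi'(|z|^2)\,[\Psi'(|z|^2)]^{n-1}e^{\Psi(|z|^2)},\qquad z\in\cn. \]
Then I would dispose of a fixed compact range $|z|\le R_0$ by a soft argument: the ratio $\Theta(z,w):=|K_\Psi(z,w)|^2/\bigl(K_\Psi(z,z)K_\Psi(w,w)\bigr)$ is continuous, equals $1$ on the diagonal, and $\varrho$ induces the Euclidean topology, so on any fixed compact set $\varrho(z,w)\le r_0$ forces $|z-w|$ small, whence $\Theta(z,w)\ge\tfrac12$ provided $r_0$ is small enough. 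It remains to treat $|z|>R_0$.

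For such $z$, after a unitary rotation I may take $z=xe_1=(x,0,\dots,0)$ with $x=|z|>R_0$, and write $w=(w_1,\xi)$ with $\xi\in\C^{n-1}$, so that $\langle z,w\rangle=x\bar w_1$. By Lemma~\ref{bergmannball} --- and, quantitatively, by the comparison \eqref{suffice} established in its proof --- the condition $\varrho(z,w)\le r_0$ implies $w\in D(z,M)$ with $M=M(r_0)\to0$ as $r_0\to0$; concretely, $|w_1-x|\le M[\Phi'(x^2)]^{-1/2}$ and $|\xi|\le M[\Psi'(x^2)]^{-1/2}$. Writing $\langle z,w\rangle=re^{i\theta}$ we have $r=x|w_1|$ and $|\theta|=|\arg w_1|$. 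Since $r$ differs from $x^2$ by at most $Mx[\Phi'(x^2)]^{-1/2}$ (because $r=x|w_1|$ and $|w_1-x|\le M[\Phi'(x^2)]^{-1/2}$), which for $M$ small and $x>R_0$ lies inside the window of Lemma~\ref{smooth} around $t=x^2$, we get $\Phi'(r)\simeq\Phi'(x^2)$, hence $\theta_0(r)\simeq[\Phi'(x^2)]^{-1/2}/x$ and therefore $|\theta|\lesssim M\,\theta_0(r)$; taking $r_0$ small enough forces $|\theta|<c\,\theta_0(r)$ with $c$ the constant of Lemma~\ref{lemmah}, and that lemma then gives $|K_\Psi(z,w)|\gtrsim\Phi'(r)[\Psi'(r)]^{n-1}e^{\Psi(r)}$.

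It remains to compare this with $\bigl(K_\Psi(z,z)K_\Psi(w,w)\bigr)^{1/2}\simeq[\Phi'(x^2)\Phi'(|w|^2)]^{1/2}[\Psi'(x^2)\Psi'(|w|^2)]^{(n-1)/2}e^{\frac12(\Psi(x^2)+\Psi(|w|^2))}$. Both $r$ and $|w|^2$ differ from $x^2$ by $O\bigl(Mx[\Phi'(x^2)]^{-1/2}\bigr)$ (the $|\xi|^2$-part of $|w|^2-|w_1|^2$ being merely $O(M^2)$), so Lemmas~\ref{smooth} and~\ref{smooth2} make all the ``$\Phi'$'' and ``$\Psi'$'' factors mutually comparable, and they cancel from the comparison. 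For the exponential factors I would use $r=x|w_1|$ to write, with $Q_x$ as in \eqref{Qdef},
\[ \tfrac12\bigl(\Psi(x^2)+\Psi(|w|^2)\bigr)-\Psi(r)=Q_x(|w_1|)+\tfrac12\bigl(\Psi(|w|^2)-\Psi(|w_1|^2)\bigr). \]
The last term lies between $0$ and $\Psi'(|w|^2)|\xi|^2\le M^2\,\Psi'(|w|^2)/\Psi'(x^2)\lesssim M^2$; and since $|w_1|$ lies in the interval $[x_1,x_2]$ of Lemma~\ref{LemQ}, the identities $Q_x(x)=Q_x'(x)=0$ together with \eqref{middle} give $|Q_x(|w_1|)|\le\tfrac12\bigl(\sup_{[x_1,x_2]}Q_x''\bigr)(|w_1|-x)^2\lesssim\Phi'(x^2)\cdot M^2[\Phi'(x^2)]^{-1}=M^2$. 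Hence $e^{\Psi(r)}\simeq e^{\frac12(\Psi(x^2)+\Psi(|w|^2))}$, and multiplying the three comparisons yields $|K_\Psi(z,w)|^2\gtrsim K_\Psi(z,z)K_\Psi(w,w)$, which completes the argument.

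The one place where care is needed is the uniformity in $x>R_0$ of the various ``deviation lies inside the smoothness window'' claims used above; each of these reduces to an inequality of the shape $M[\Phi'(x^2)]^{\alpha-1/2}\lesssim1$ (and its $\Psi'$-counterpart, which uses \eqref{basic2} when $n>1$), and since $\alpha-\frac12<0$ while $\Phi'$ and $\Psi'$ are bounded below, the left-hand side is at most a constant times $M$ and hence small. Apart from this, the proof consists only of Cauchy--Schwarz, the continuity/compactness step on $\{|z|\le R_0\}$, and one-line Taylor estimates, all resting on the already-established Lemmas~\ref{lemmah}, \ref{smooth}, \ref{smooth2}, \ref{LemQ} and~\ref{bergmannball}.
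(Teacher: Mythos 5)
Your proof is correct and takes essentially the same route the paper intends: the paper's own proof of this lemma is the one-sentence citation of Lemma~\ref{lemmah} together with Lemma~\ref{bergmannball}, and you have simply carried out the combination in detail --- Cauchy--Schwarz for the upper bound, Lemma~\ref{bergmannball} (via \eqref{suffice}) to translate $\varrho(z,w)\le r_0$ into $w\in D(z,Cr_0)$, Lemma~\ref{lemmah} for the diagonal asymptotics and the lower bound $|K_\Psi(z,w)|\gtrsim\Phi'(r)[\Psi'(r)]^{n-1}e^{\Psi(r)}$ in the cone $|\theta|<c\theta_0(r)$, and Lemmas~\ref{smooth}, \ref{smooth2}, \ref{LemQ} to control the drift of $\Phi'$, $\Psi'$ and the exponential factor $Q_x$. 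The bookkeeping (the $\Phi'(x^2)^{\alpha-1/2}$ remark, the split of the exponent into $Q_x(|w_1|)$ plus the $|\xi|^2$-correction, the compactness argument for bounded $|z|$) is all sound and the uniformity concern is correctly identified and dispatched.
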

\begin{proof}
The lemma follows from Lemma~\ref{lemmah} along with
Lemma~\ref{bergmannball}.
%applied to both sides of this estimate, we
%obtain
%\[ |f(z)|^p| e^{-\Psi(|z|^2)} \lesssim \frac{1}{ |B(z, r)|}
%\int_{B(z, m)} |f(w)|^p e^{\Psi(|z|^2)-2\Psi(|z||P_z w|)} dV(w). \]
%Using \eqref{middle} of Lemma~\ref{LemQ}, we get \[ |f(z)|^p|
%e^{-\Psi(|z|^2)} \lesssim \frac{1}{ |B(z, r)|} \int_{B(z, m)}
%|f(w)|^p e^{-\Psi(|P_z w|^2)} dV(w).\] Observing that
%\[ |\Psi(|w|^2)-\Psi(|P_z w|^2)|\lesssim 1 \]
%uniformly for $w$ in $B(z,r)$, we arrive at the desired estimate.
\end{proof}

\begin{lemma}\label{meanv} Suppose that there exists a real number $\eta<1/2$ such that
\eqref{basic} holds and that \eqref{basic2} holds if $n>1$, and let
$r_0$ be the constant from Lemma~\ref{localball}. Then there is a
constant $C$ such that
$$ |f(z)|^2 e^{-\Psi(|z|^2)} \leq \frac{C}{ |B(z, r)|} \int_{B(z, r)} |f(w)|^2
d\mu_{\Psi}(w)$$ for every entire function $f$ on $\cn$ and every
$z$ in $\cn$.
\end{lemma}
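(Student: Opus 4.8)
The plan is to prove the sub-mean-value inequality by combining the reproducing property of $K_\Psi$ with the uniform local estimate for the Bergman kernel from Lemma~\ref{localball} and the volume estimate \eqref{ball-vol}. The starting point is the identity $f(z) = \int_{\cn} K_\Psi(z,w) f(w)\, d\mu_\Psi(w)$, valid for holomorphic polynomials and, by a density argument, for all $f$ in $\A$; for a general entire function $f$ one works with the truncations to Bergman balls, which is legitimate since the inequality to be proved is local. First I would fix $r>0$; if $r \le r_0$ we can proceed directly, and if $r > r_0$ the claim for radius $r$ follows from the claim for radius $r_0$ since $B(z,r_0)\subset B(z,r)$ and, by \eqref{ball-vol}, $|B(z,r)|\simeq |B(z,r_0)|$ with constants depending only on $r$. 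So assume $r=r_0$.

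Next I would estimate $|f(z)|$ by writing it using a reproducing-type formula localized to $B(z,r_0)$. The cleanest route is: the function $g(w) = f(w)\,\chi_{B(z,r_0)}(w)$ is not holomorphic, so instead use that $f = P_\Psi f$ on $\A$ and split the reproducing integral as $f(z) = \int_{B(z,r_0)} K_\Psi(z,w)f(w)\,d\mu_\Psi(w) + \int_{\cn \setminus B(z,r_0)} K_\Psi(z,w)f(w)\,d\mu_\Psi(w)$; but controlling the tail is awkward. A more robust approach, and the one I would actually carry out, is the standard trick of applying the reproducing property on the ball to $f$ itself via subharmonicity: $|f(z)|^2 e^{-\Psi(|z|^2)}$ is comparable to $|f(z)|^2 / K_\Psi(z,z)$ times $K_\Psi(z,z)e^{-\Psi(|z|^2)}$, and one uses that $|f(z)|^2/K_\Psi(z,z) = |\langle f, K_\Psi(\cdot,z)/\sqrt{K_\Psi(z,z)}\rangle|^2 \le \|f\|^2$ only globally — not good enough locally. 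The correct local device is: apply Cauchy--Schwarz to $f(z)=\int_{B(z,r_0)} K_\Psi(z,w)f(w)\,d\mu_\Psi(w)$ after first justifying that the off-ball contribution is negligible, OR, better, invoke plurisubharmonicity of $|f|^2$ together with the comparability $e^{-\Psi(|w|^2)} \simeq e^{-\Psi(|z|^2)}$ for $w \in B(z,r_0)$, which follows from Lemma~\ref{bergmannball} (the Euclidean dimensions of $B(z,r_0)$ are $[\Phi'(|z|^2)]^{-1/2}$ and $[\Psi'(|z|^2)]^{-1/2}$, on which $\Psi$ varies by a bounded amount by Lemmas~\ref{smooth} and~\ref{smooth2}). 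Then the classical sub-mean-value property for the plurisubharmonic function $|f|^2$ over the Euclidean polydisc/ellipsoid $D(z,m) \subset B(z,r_0)$ gives $|f(z)|^2 \le \frac{C}{|D(z,m)|}\int_{D(z,m)} |f(w)|^2\,dV(w)$, and multiplying through by $e^{-\Psi(|z|^2)} \simeq e^{-\Psi(|w|^2)}$ and using $|D(z,m)|\simeq |B(z,r_0)|$ from \eqref{ball-vol} yields the claim.

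So the key steps in order are: (1) reduce to $r = r_0$ using \eqref{ball-vol}; (2) use Lemma~\ref{bergmannball} to see that $D(z,m)\subset B(z,r_0)$ for suitable $m$ and that on this set $\Psi(|w|^2) = \Psi(|z|^2) + O(1)$, hence $e^{-\Psi(|w|^2)} \simeq e^{-\Psi(|z|^2)}$, with all constants uniform in $z$ (this uses Lemma~\ref{smooth} for the $P_z w$ direction and Lemma~\ref{smooth2} for the orthogonal directions); (3) apply the sub-mean-value inequality for the plurisubharmonic function $w \mapsto |f(w)|^2$ over the ellipsoid $D(z,m)$, which is a Euclidean ball in suitably rescaled coordinates; (4) combine with $|D(z,m)| \simeq |B(z,r_0)| \simeq |B(z,r)|$ via \eqref{ball-vol}. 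I expect step (2) — the uniform comparability $e^{-\Psi(|w|^2)}\simeq e^{-\Psi(|z|^2)}$ on $B(z,r_0)$ — to be the main obstacle, since it is exactly here that one must quantify how much the weight can change across a Bergman ball of fixed radius; but this is precisely controlled by the geometry established in Lemma~\ref{bergmannball} together with the smoothness Lemmas~\ref{smooth} and~\ref{smooth2}, which bound the oscillation of $\Psi'$ (and hence of $\Psi$ after integration) over intervals of the relevant lengths $[\Phi'(|z|^2)]^{-1/2}$ and $[\Psi'(|z|^2)]^{-1/2}$.
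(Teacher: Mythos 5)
Your plan founders on step (2), which you correctly flagged as the potential obstacle. The comparability $e^{-\Psi(|w|^2)} \simeq e^{-\Psi(|z|^2)}$ for $w \in B(z,r_0)$, uniformly in $z$, is \emph{false}. The smoothness Lemmas~\ref{smooth} and \ref{smooth2} control the oscillation of $\Psi'$ over the relevant intervals (so $\Psi'(|w|^2) \simeq \Psi'(|z|^2)$), but that only says $\Psi$ changes by roughly $\Psi'(|z|^2)\cdot(|w|^2 - |z|^2)$, and this increment is not $O(1)$. In the radial direction of $D(z,M)$ one can have $\bigl||w|-|z|\bigr|$ of order $[\Phi'(|z|^2)]^{-1/2}$, hence $|w|^2-|z|^2$ of order $|z|[\Phi'(|z|^2)]^{-1/2}$, giving
\[ \Psi(|w|^2)-\Psi(|z|^2) \approx |z|\,\Psi'(|z|^2)\,[\Phi'(|z|^2)]^{-1/2} = \frac{\Phi(|z|^2)}{|z|\,[\Phi'(|z|^2)]^{1/2}}.\]
By the estimate $\Phi(t)\gtrsim t^{1/2}[\Phi'(t)]^{1/2}$ already used in Section~5, this is $\gtrsim 1$, but it is in general unbounded: already for the classical Fock weight $\Psi(x)=cx$ it equals $c^{1/2}|z|\to\infty$. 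So the weight can swing by a factor that grows with $|z|$ across a Bergman ball of fixed radius, and your reduction to the plain sub-mean-value inequality for $|f|^2$ does not close.

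The paper's proof avoids this entirely by a small but essential change in the plurisubharmonic function used. By Lemma~\ref{localball}, $K_\Psi(\cdot,z)$ is zero-free on $B(z,r)$, so $w\mapsto |f(w)|^2\,|K_\Psi(z,w)|^{-2}$ (the modulus squared of the holomorphic function $f(w)/K_\Psi(w,z)$) is plurisubharmonic there. Applying the sub-mean-value inequality over $D(z,m)\subset B(z,r)$ to \emph{this} function, then using $|K_\Psi(z,w)|^2\simeq K_\Psi(z,z)K_\Psi(w,w)$ from Lemma~\ref{localball} and the diagonal asymptotics $K_\Psi(w,w)\simeq \Phi'(|w|^2)[\Psi'(|w|^2)]^{n-1}e^{\Psi(|w|^2)}$ from Lemma~\ref{lemmah}, converts the kernel factors to exactly the weights $e^{-\Psi(|z|^2)}$ and $e^{-\Psi(|w|^2)}$ in the right places, with the remaining $\Phi'$, $\Psi'$ factors absorbed by $|B(z,r)|$ via \eqref{ball-vol}. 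In short: dividing by the reproducing kernel before invoking plurisubharmonicity is the missing idea, precisely because it is the kernel, not the raw weight, that is locally constant in the Bergman metric.
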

\begin{proof}
By Lemma~\ref{localball}, the holomorphic function $w\mapsto K(z,w)$
does not vanish at any point in $B(z,r)$. Thus the function
$w\mapsto |f(w)|^2 |K_{\Psi}(z,w)|^{-2}$ is subharmonic in $B(z,r)$.
Choosing $m$ as in Lemma~\ref{bergmannball}, we therefore get
$$\aligned  |f(z)|^2 |K(z,z)|^{-2} & \lesssim
\frac{1}{ |D(z, m)|} \int_{D(z, m)} |f(w)|^2 |K_{\Psi}(z,w)|^{-2}  dV(w) \\
&  \lesssim    \frac{1}{ |B(z, r)|} \int_{B(z, r)} |f(w)|^2
|K_{\Psi}(z,w)|^{-2} dV(w).
\endaligned$$
Applying Lemma~\ref{localball} to the integrand to the left and then
Lemma~\ref{lemmah} to each side, we arrive at the desired estimate.
\end{proof}
Note that, by \eqref{ball-vol}, the lemma is valid for all positive
$r$, with the additional proviso that $C$ depend on $r$.
\begin{proof}[Proof of Theorem~D] We begin by noting that the implication (i) $\Rightarrow$ (ii)
is trivial because it is just the statement that the Carleson
measure condition holds for the functions $K(\cdot,z)$. To prove
that (ii) implies (iii), we assume that (ii) holds and consider a
ball $B(z, r)$ where $r$ is a fixed positive number. Then, by
Lemma~\ref{localball} and \eqref{ball-vol}, we have
$$ \frac{1}{|B(z, r)|} \lesssim \frac{|K_{\Psi}(z,w)|^2}{K_{\Psi}(z,z)} e^{-\Psi(|w|^2)}$$
when $\varrho(z,w)\le r_0$, and therefore we obtain
$$
\frac{\nu(B(z, r))}{|B(z, r)|} \lesssim
\int_{\cn}\frac{|K_{\Psi}(z,w)|^2}{K_{\Psi}(z,z)} e^{-\Psi(|w|^2)}
d\nu(w) \leq C.
$$
The implication  (iii) $\Rightarrow$ (iv) is trivial (modulo the
existence of $\Psi$-lattices), and we are therefore done if we can
prove that (iv) implies (i). To this end, assume that (iv) holds,
and let $(a_k)$ be a $\Psi$-lattice with maximal covering radius
$r$. By Lemma~\ref{meanv}, we see that
$$\sup_{z\in B(a_k, r)}|f(z)|^2 e^{-\Psi(|z|^2)} \lesssim \frac{1}{ |B(a_k, 2r)|} \int_{B(a_k, 2r)} |f(w)|^2 d\mu_\Psi(z)$$
for every $k$. We therefore get
\[ \int_{\cn} |f(z)|^2 d\nu_{\Psi}(z)  \lesssim   \sum_{k}\int_{ B(a_k, 2r)} |f(w)|^2 d\mu_\Psi(w) \\
 \lesssim  \int_{ \cn} |f(w)|^2 d\mu_\Psi(w),
\]
where the latter inequality holds by  Lemma~\ref{covering2}.
\end{proof}

For $\nu$ a nonnegative Borel measure on $\cn$, we define the
Toeplitz operator $T_\nu$ on $\A$ in the following way:
$$(T_\nu f)(z) :=   \int_{\cn}   f(w) K_{\Psi}(z,w) e^{-\Psi(|w|^2)} d\nu(w).$$
A computation shows that $E^{\ast}_{\nu} E_\nu= T_\nu$. Thus
Theorem~D characterizes bounded Toeplitz operators. Compact Toeplitz
operators can likewise be characterized via so-called vanishing
Carleson measures; an obvious and straightforward modification of
Theorem~D gives a description of such measures. Toeplitz operators
belonging to the Schatten classes $\cS_p$ are characterized by the
following theorem.

\newtheorem*{thE}{\bf Theorem E}
\begin{thE}  Let $\Psi$ be a logarithmic growth function, and
suppose that there exists a real number $\eta<1/2$ such that
\eqref{basic} holds and that \eqref{basic2} holds if $n>1$. If $
\nu$ is a nonnegative Borel measure on $\cn$ and $p\ge 1$, then the
following statements are equivalent:
\begin{itemize}
\item[(i)] The Toeplitz operator $T_{\nu}$ on $\A$ belongs to the the Schatten class $\cS_p$;
\item[(ii)] There exists a $\Psi$-lattice $(a_k)$ such that
$$ \sum_{k =1}^{\infty} \left(\frac{\nu( B(a_k, r))}{ |B(a_k, r)|} \right)^p < +\infty,$$
 where $r$ is the maximal covering radius for $(a_k)$.
\end{itemize}
\end{thE}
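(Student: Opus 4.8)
The plan is to derive Theorem~E from the geometry of Sections~7--8 (Lemmas~\ref{bergmannball}, \ref{covering1}, \ref{covering2}, \ref{localball}, \ref{meanv}) together with two soft facts about Schatten ideals, both valid for $p\ge1$: (a) if $0\le A\le B$ and $B\in\cS_p$, then $A\in\cS_p$ with $\|A\|_{\cS_p}\le\|B\|_{\cS_p}$ (by the min--max principle for eigenvalues of positive operators); and (b) if $S\ge0$ lies in $\cS_p$, its diagonal in any orthonormal basis lies in $\ell^p$ with $\ell^p$-norm at most $\|S\|_{\cS_p}$ (by convexity of $t\mapsto t^p$ and Schur majorization). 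Throughout I write $k_z:=K_\Psi(\cdot,z)/\sqrt{K_\Psi(z,z)}$ for the normalized reproducing kernel and use $T_\nu=E_\nu^{\ast}E_\nu$, so that $\langle T_\nu f,f\rangle=\int_{\cn}|f|^2\,d\nu_\Psi$ for $f\in\A$ and $\langle T_\nu k_z,k_z\rangle=\int_{\cn}|K_\Psi(w,z)|^2K_\Psi(z,z)^{-1}\,d\nu_\Psi(w)$, the quantity of Theorem~D(ii). Two standing estimates will be used: Lemma~\ref{lemmah} and \eqref{ball-vol} combine to give $K_\Psi(z,z)e^{-\Psi(|z|^2)}\simeq|B(z,r)|^{-1}$ for each fixed $r$; feeding this into Lemma~\ref{meanv} yields $|f(z)|^2/K_\Psi(z,z)\lesssim\int_{B(z,r)}|f|^2\,d\mu_\Psi$, whence, by the bounded-overlap Lemma~\ref{covering2}, the normalized kernels $\{k_{a_k}\}$ along any $\Psi$-lattice $(a_k)$ --- indeed along any family of points lying within bounded Bergman distance of such a lattice --- form a Bessel system, i.e. $\sum_k|\langle f,k_{a_k}\rangle|^2\lesssim\|f\|^2$.

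For the implication (i) $\Rightarrow$ (ii), fix an arbitrary $\Psi$-lattice $(a_k)$ with maximal covering radius $r$ and a radius $r_0$ as in Lemma~\ref{localball}. By (the proof of) Lemma~\ref{covering1}, each ball $B(a_k,r)$ is covered by at most $N=N(r,r_0)$ balls $B(b_{k,i},r_0/2)$ whose centres satisfy $\varrho(a_k,b_{k,i})<r$; then $|B(b_{k,i},r_0/2)|\simeq|B(a_k,r)|$ and $\{b_{k,i}\}_{k,i}$ still has bounded overlap (because the $a_k$ are $r$-separated). Exactly as in the proof of the implication (ii) $\Rightarrow$ (iii) of Theorem~D --- using Lemma~\ref{localball} to see that $|K_\Psi(w,b_{k,i})|^2K_\Psi(b_{k,i},b_{k,i})^{-1}e^{-\Psi(|w|^2)}\gtrsim|B(b_{k,i},r_0/2)|^{-1}$ for $w\in B(b_{k,i},r_0/2)$ --- we obtain $\nu(B(b_{k,i},r_0/2))/|B(b_{k,i},r_0/2)|\lesssim\langle T_\nu k_{b_{k,i}},k_{b_{k,i}}\rangle$, so that $\nu(B(a_k,r))/|B(a_k,r)|\lesssim\sum_{i\le N}\langle T_\nu k_{b_{k,i}},k_{b_{k,i}}\rangle$ and, since $p\ge1$, $\big(\nu(B(a_k,r))/|B(a_k,r)|\big)^p\lesssim\sum_{i\le N}\langle T_\nu k_{b_{k,i}},k_{b_{k,i}}\rangle^p$. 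Re-indexing $\{b_{k,i}\}$ as a sequence $\{b_j\}$ and letting $A\colon\A\to\ell^2$ be the bounded operator $Af=\{\langle f,k_{b_j}\rangle\}_j$ (bounded precisely because $\{k_{b_j}\}$ is Bessel), we have $A^{\ast}e_j=k_{b_j}$, and since $T_\nu\in\cS_p$ gives $T_\nu^{1/2}\in\cS_{2p}$, the positive operator $AT_\nu A^{\ast}=(AT_\nu^{1/2})(AT_\nu^{1/2})^{\ast}$ lies in $\cS_p$ with $\|AT_\nu A^{\ast}\|_{\cS_p}\le\|A\|^2\|T_\nu\|_{\cS_p}$; its $j$-th diagonal entry is $\langle T_\nu k_{b_j},k_{b_j}\rangle$. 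Fact (b) then gives $\sum_j\langle T_\nu k_{b_j},k_{b_j}\rangle^p\lesssim\|T_\nu\|_{\cS_p}^p$, and summing the above over $k$ produces $\sum_k\big(\nu(B(a_k,r))/|B(a_k,r)|\big)^p<\infty$.

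For the implication (ii) $\Rightarrow$ (i), let $(a_k)$ be a $\Psi$-lattice as in (ii) with maximal covering radius $r$, and disjointify the covering $\{B(a_k,r)\}$ into a Borel partition $\cn=\bigsqcup_k E_k$ with $E_k\subset B(a_k,r)$. Writing $\chi_k$ for the characteristic function of $B(a_k,2r)$, $T_{\chi_k}$ for the corresponding Toeplitz operator, and $\widehat\nu_k:=\nu(B(a_k,r))/|B(a_k,r)|$, Lemma~\ref{meanv} and \eqref{ball-vol} give $|f(w)|^2e^{-\Psi(|w|^2)}\lesssim|B(a_k,2r)|^{-1}\int_{B(a_k,2r)}|f|^2\,d\mu_\Psi$ for $w\in B(a_k,r)$, hence
\[
\langle T_\nu f,f\rangle=\sum_k\int_{E_k}|f(w)|^2e^{-\Psi(|w|^2)}\,d\nu(w)\;\lesssim\;\sum_k\widehat\nu_k\int_{B(a_k,2r)}|f|^2\,d\mu_\Psi=\sum_k\widehat\nu_k\,\langle T_{\chi_k}f,f\rangle ,
\]
so that $0\le T_\nu\le C\sum_k\widehat\nu_k\,T_{\chi_k}$ (the right-hand side being a bounded positive operator since $\sup_k\widehat\nu_k<\infty$). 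The summands satisfy $\Tr T_{\chi_k}=\int_{B(a_k,2r)}K_\Psi(w,w)e^{-\Psi(|w|^2)}\,dV(w)\simeq1$ uniformly in $k$ and $\big\|\sum_kT_{\chi_k}\big\|\le N$, the overlap constant of Lemma~\ref{covering2} (because $\sum_k\chi_k\le N$); these are precisely the endpoint estimates making the map $\{c_k\}\mapsto\sum_kc_k\,T_{\chi_k}$ bounded from $\ell^1$ to $\cS_1$ and from $\ell^\infty$ to the bounded operators on $\A$, so by complex interpolation it is bounded from $\ell^p$ to $\cS_p$, whence $\big\|\sum_k\widehat\nu_k\,T_{\chi_k}\big\|_{\cS_p}^p\lesssim\sum_k\widehat\nu_k^{\,p}<\infty$. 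Fact (a) now yields $T_\nu\in\cS_p$ with $\|T_\nu\|_{\cS_p}^p\lesssim\sum_k\big(\nu(B(a_k,r))/|B(a_k,r)|\big)^p$, completing the proof.

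The genuinely delicate point is the sufficiency direction: the naive decomposition $T_\nu=\sum_kT_{\nu|_{E_k}}$ combined with the $\cS_p$-triangle inequality is hopelessly lossy (it would only bound $\|T_\nu\|_{\cS_p}$ by $\sum_k\widehat\nu_k$), and the sharp $\big(\sum_k\widehat\nu_k^{\,p}\big)^{1/p}$ is recovered only via the atomic domination $T_\nu\lesssim\sum_k\widehat\nu_k\,T_{\chi_k}$ together with the uniformly bounded trace and bounded overlap of the building blocks $T_{\chi_k}$ --- exactly the two interpolation endpoints above. On the necessity side the sole non-formal ingredient is the Bessel property of lattice kernels, which rests on Lemmas~\ref{meanv} and~\ref{covering2}; everything else is soft Schatten-ideal theory. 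The hypothesis $p\ge1$ enters twice: through the power inequality $(\sum_i t_i)^p\le N^{p-1}\sum_i t_i^p$ used in the necessity argument, and through the $\ell^1\to\cS_1$ endpoint in the sufficiency argument; the range $0<p<1$ would require a different treatment.
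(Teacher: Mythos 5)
Your proof is correct, and both directions follow the same overall strategy as the paper's: for (i)~$\Rightarrow$~(ii) you build a bounded ``synthesis'' operator from the normalized kernels at lattice points (your $A$, the paper's $J$ of Lemma~\ref{boundedJ}) and read off the $\ell^p$-summability of the diagonal of the compressed positive operator; for (ii)~$\Rightarrow$~(i) both arguments interpolate between the $\mathcal S_\infty$-endpoint supplied by Theorem~D and the $\mathcal S_1$-endpoint coming from the trace estimate $\Tr T_\nu=\int K_\Psi(w,w)\,d\nu_\Psi(w)\lesssim\sum_k\widehat\nu_k$. There are, however, two worthwhile differences. In the necessity direction you refine each ball $B(a_k,r)$ by at most $N$ balls of radius $r_0/2$ before invoking Lemma~\ref{localball}, which cleanly resolves the constraint $\varrho\le r_0$ that the paper applies to a ball of radius $r$ without comment; this is a small but genuine tightening (the paper could alternatively just select the lattice with covering radius $\le r_0$, which is allowed since (ii) only asks for existence of a lattice). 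In the sufficiency direction you replace the paper's terse ``by interpolation'' with a concrete atomic domination $0\le T_\nu\lesssim\sum_k\widehat\nu_k\,T_{\chi_k}$ and interpolate the \emph{linear} map $\{c_k\}\mapsto\sum_kc_k\,T_{\chi_k}$ between $\ell^1\to\mathcal S_1$ (from $\Tr T_{\chi_k}\simeq1$) and $\ell^\infty\to\mathcal S_\infty$ (from the bounded overlap $\sum_k\chi_k\le N$), then conclude via operator monotonicity of $\|\cdot\|_{\mathcal S_p}$ on the positive cone. This makes explicit exactly what is being interpolated and is a more self-contained rendering of the same idea; both buy the sharp $\ell^p$-bound that, as you correctly note, a naive $\mathcal S_p$-triangle inequality on $T_\nu=\sum T_{\nu|_{E_k}}$ would not give.
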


%Arguing as the previous proof we have
%\begin{thm}\label{vcarl}  If $ \nu$ is Borel measure on $\cn$, then the following are equivalent
%
%(i) $\nu$ is a vanishing Carleson measure
%
%(ii)  We have that
%$$ \lim_{|z| \to + \infty} \int_{\cn}  \frac{|K_{\Psi}(z,w)|^{2}}{K_{\Psi}(z,z)} e^{-\Psi(|w|^2)} d\nu(w) = 0. $$
%
%
%(iii)  There is  $0<r< c/8$   such that
%$$ \lim_{|z| \to + \infty} \frac{\nu( B(z, r))}{ |B(z, r)|} = 0.$$
%
%
%(iv) There is  $0<r< c/8$  such that
%$$ \lim_{k \to + \infty} \frac{\nu( B(a_k, r))}{ |B(a_k, r)|} = 0,$$
% where $(a_k) $ is as in Lemma~\ref{covering2}.
% \end{thm}

For the proof of this theorem, we require the following two lemmas.

\begin{lemma}\label{boundedJ}   Suppose that $(e_j)$ is an orthonormal basis for $\A$ and
that $(a_j)$ is a $\Psi$-lattice. Then the operator $J$ on $\A$
defined by
$$Je_j  (z) :=  \frac{K_{\Psi}(z,a_j)}{\sqrt{K_{\Psi}(a_j,a_j)}}$$
is bounded.
\end{lemma}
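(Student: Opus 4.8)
The plan is to show that the operator $J$ is bounded by bounding the Hilbert--Schmidt-like quantity through a Schur test applied to the associated matrix, or more directly, by testing $J$ against an arbitrary $f=\sum_j \langle f,e_j\rangle e_j$ in $\A$. Writing $f=\sum_j \hat f(j) e_j$, we have $Jf(z)=\sum_j \hat f(j) K_\Psi(z,a_j)/\sqrt{K_\Psi(a_j,a_j)}$, so the point is to prove
\[ \int_{\cn} \Big| \sum_j \hat f(j) \frac{K_\Psi(z,a_j)}{\sqrt{K_\Psi(a_j,a_j)}}\Big|^2 d\mu_\Psi(z) \lesssim \sum_j |\hat f(j)|^2. \]
By duality this is equivalent to boundedness of the adjoint $J^*$, which sends $g\in\A$ to the sequence $\big(\langle g, K_\Psi(\cdot,a_j)\rangle/\sqrt{K_\Psi(a_j,a_j)}\big)_j = \big(g(a_j)/\sqrt{K_\Psi(a_j,a_j)}\big)_j$; so it suffices to prove the sampling-type inequality $\sum_j |g(a_j)|^2 e^{-\Psi(|a_j|^2)}\lesssim \|g\|^2_{\A}$ for every $g\in\A$. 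This is precisely a Carleson-measure statement for the atomic measure $\nu=\sum_j \delta_{a_j}$.

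The key steps, in order, are as follows. First, reduce boundedness of $J$ to the inequality $\sum_j |g(a_j)|^2 e^{-\Psi(|a_j|^2)}\lesssim \|g\|^2_{\A}$ via the adjoint computation above, using that $K_\Psi(a_j,a_j)=e^{\Lambda_\Psi(a_j)}$ and that the reproducing property gives $\langle g, K_\Psi(\cdot,a_j)\rangle = g(a_j)$. Second, apply Lemma~\ref{meanv} with a fixed radius (say $r$ equal to the maximal covering radius of the $\Psi$-lattice, or any fixed radius) to bound each term: $|g(a_j)|^2 e^{-\Psi(|a_j|^2)} \lesssim |B(a_j,r)|^{-1}\int_{B(a_j,r)} |g(w)|^2 d\mu_\Psi(w)$. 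Third, by \eqref{ball-vol}, $|B(a_j,r)|\simeq [\Phi'(|a_j|^2)]^{-1/2}[\Psi'(|a_j|^2)]^{(n-1)/2}$; one must check that consecutive lattice points have comparable ball volumes, which follows from Lemma~\ref{smooth} and Lemma~\ref{smooth2} since points of $B(a_j, 2r)$ have $|\cdot|^2$ within a controlled distance of $|a_j|^2$. Fourth, sum over $j$ and invoke the finite-overlap property of the dilated balls $B(a_j,2r)$ guaranteed by Lemma~\ref{covering2}: the balls $B(a_j,r)$ cover $\cn$ and the $B(a_j, 2r)$ have bounded overlap $N$, so $\sum_j \int_{B(a_j,r)}|g|^2 d\mu_\Psi \le N\int_{\cn}|g|^2 d\mu_\Psi = N\|g\|^2_{\A}$, which completes the estimate.

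The main obstacle I expect is bookkeeping with the mean-value inequality from Lemma~\ref{meanv}: that lemma is stated with the ball $B(z,r)$ for the fixed $r_0$ of Lemma~\ref{localball}, and one needs it uniformly over the lattice points with the correct volume normalization, which in turn relies on the comparability of $|B(a_j, r)|$ across the lattice --- this is where the smoothness conditions \eqref{basic} and \eqref{basic2} enter through Lemmas~\ref{smooth} and~\ref{smooth2}. A secondary subtlety is making the adjoint/duality reduction rigorous: one should first establish the inequality for $g$ ranging over holomorphic polynomials (a dense subspace of $\A$) and then pass to the limit, and one should note that the series defining $Jf$ converges in $\A$ precisely because the estimate being proved shows $J$ is bounded on the dense set of finitely supported sequences $(\hat f(j))$. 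Everything else is a routine application of the covering lemmas already in hand.
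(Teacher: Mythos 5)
Your overall strategy — reduce boundedness of $J$ to a Carleson-type sampling inequality for the lattice, then prove that inequality via the mean-value estimate of Lemma~\ref{meanv}, the ball-volume comparison \eqref{ball-vol}, and the finite-overlap Lemma~\ref{covering2} — is exactly the one the paper uses (the paper just packages the sampling inequality as an invocation of Theorem~D). But there is a concrete error in identifying which sampling inequality is actually needed, and this error propagates and makes your final summing step fail.

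You correctly compute $J^{\ast}g$ as the sequence $\big(g(a_j)/\sqrt{K_{\Psi}(a_j,a_j)}\big)_j$, but the $\ell^2$-norm of this sequence is $\sum_j |g(a_j)|^2/K_{\Psi}(a_j,a_j)$, \emph{not} $\sum_j |g(a_j)|^2 e^{-\Psi(|a_j|^2)}$. These differ by the factor $e^{\Psi(|a_j|^2)}/K_{\Psi}(a_j,a_j)$, which by Lemma~\ref{lemmah} is comparable to $\big(\Phi'(|a_j|^2)[\Psi'(|a_j|^2)]^{n-1}\big)^{-1} \simeq |B(a_j,r)|$ and tends to $0$. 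So the inequality you state is strictly stronger than what is needed, and is in fact \emph{false}: the measure $\sum_j \delta_{a_j}$ is not a Carleson measure in the sense of Theorem~D, since $\nu(B(a_k,r))\ge 1$ while $|B(a_k,r)|\to 0$, so the ratio in condition~(iii) blows up. You can see the same problem in your summing step: Lemma~\ref{meanv} gives $|g(a_j)|^2 e^{-\Psi(|a_j|^2)} \lesssim |B(a_j,r)|^{-1}\int_{B(a_j,r)}|g|^2\,d\mu_\Psi$, and the unbounded weights $|B(a_j,r)|^{-1}$ do not disappear when you sum and apply finite overlap; they ruin the estimate. The fix is to keep the correct normalization: the adjoint estimate you need is $\sum_j |g(a_j)|^2/K_{\Psi}(a_j,a_j)\lesssim \|g\|^2_{\A}$, and since $1/K_\Psi(a_j,a_j)\simeq |B(a_j,r)|\,e^{-\Psi(|a_j|^2)}$, the offending $|B(a_j,r)|^{-1}$ from Lemma~\ref{meanv} is exactly cancelled, and the finite-overlap argument then closes the proof. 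Equivalently, this is the Carleson condition for the measure $\nu=\sum_j \big(e^{\Psi(|a_j|^2)}/K_\Psi(a_j,a_j)\big)\delta_{a_j}$, which is how the paper phrases it before citing Theorem~D.
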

\begin{proof}
For two arbitrary functions $f= \sum_j c_j e_j$ and $g$ in $\A$, the
reproducing formula and the Cauchy--Schwarz inequality give \[
\left|\langle Jf , g\rangle \right|^2 = \left| \sum_{j} c_j
\frac{\overline {g(a_j)}}{\sqrt{K_{\Psi}(a_j,a_j)}} \right|^2 \le
\left( \sum_{j} |c_j|^2 \right) \left(\sum_{k}\frac{|
g(a_k)|^2}{K_{\Psi}(a_k,a_k)} \right). \] If we set
$$\nu :=  \sum_{k} \frac{e^{\Psi(|a_j|^2)}
}{K_{\Psi}(a_j,a_j)} \delta_{a_{j}},$$ then we may write this
estimate as
\[ \left|\langle Jf , g\rangle \right|^2\le \|f\|_{\A}^2 \int_{\cn}
|g(z)|^2 d\nu_{\Psi}(z).\] By Theorem~D, we see that $\nu$ is a
Carleson measure, which implies that $J$ is a bounded operator on
$\A$.
\end{proof}

\begin{lemma}\label{traceT} Suppose that $T$ is a positive operator on $\A$.
Then the trace of $T$ can be computed as \[\Tr(T)   =   \int_{\cn}
\widetilde{T}(z) K_\Psi(z , z)d\mu_\Psi(z).\]
\end{lemma}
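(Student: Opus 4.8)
The plan is to reduce everything to Parseval's identity and Tonelli's theorem, exploiting positivity of $T$ in an essential way. First I would write $T=B^2$ with $B=T^{1/2}\ge 0$ self-adjoint, and fix an arbitrary orthonormal basis $(e_j)$ for $\A$, so that $\Tr(T)=\sum_j\langle Te_j,e_j\rangle=\sum_j\|Be_j\|^2$ (with the usual understanding that this may be $+\infty$, in which case I must show the right-hand side is $+\infty$ too).

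Next I would rewrite the integrand on the right-hand side. By the definition of the Berezin transform,
\[
\widetilde{T}(z)K_\Psi(z,z)=\langle TK_\Psi(\cdot,z),K_\Psi(\cdot,z)\rangle=\langle B^2K_\Psi(\cdot,z),K_\Psi(\cdot,z)\rangle=\|BK_\Psi(\cdot,z)\|^2,
\]
using $B=B^*$. Expanding the last norm in the basis $(e_j)$ and using that $B$ is self-adjoint together with the reproducing property $(Be_j)(z)=\langle Be_j,K_\Psi(\cdot,z)\rangle=\langle e_j,BK_\Psi(\cdot,z)\rangle$, Parseval's identity gives
\[
\widetilde{T}(z)K_\Psi(z,z)=\sum_{j}\big|\langle e_j,BK_\Psi(\cdot,z)\rangle\big|^2=\sum_{j}\big|(Be_j)(z)\big|^2 .
\]
Finally I would integrate this identity over $\cn$ against $d\mu_\Psi$. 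Since every term is nonnegative, Tonelli's theorem permits interchanging the sum and the integral, yielding
\[
\int_{\cn}\widetilde{T}(z)K_\Psi(z,z)\,d\mu_\Psi(z)=\sum_{j}\int_{\cn}\big|(Be_j)(z)\big|^2\,d\mu_\Psi(z)=\sum_{j}\|Be_j\|^2=\Tr(T),
\]
which is the claim.

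The only genuinely delicate point is the interchange of summation and integration, and this is precisely where positivity of $T$ is used: Tonelli applies because each summand $|(Be_j)(z)|^2$ is nonnegative, so the identity is valid verbatim even when $\Tr(T)=+\infty$ (both sides being then infinite). I would also remark in passing that $\Tr(T)$ does not depend on the choice of orthonormal basis, so the formula is well posed; apart from that, the argument is entirely routine.
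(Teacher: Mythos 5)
Your proof is correct and proceeds along the same lines as the paper's: expand things in an orthonormal basis and interchange a sum with the integral $\int_{\cn}\cdots\,d\mu_\Psi$. The paper simply writes $K_\Psi(z,w)=\sum_k e_k(z)\overline{e_k(w)}$, inserts this into the integral, and integrates term by term, leaving the justification of the interchange implicit. Your version supplies exactly the missing justification: by factoring $T=B^2$ with $B=T^{1/2}$ and using the reproducing property to rewrite $\widetilde T(z)K_\Psi(z,z)=\sum_j|(Be_j)(z)|^2$, you turn the integrand into a sum of nonnegative terms, so Tonelli applies with no further hypotheses and the identity holds even when both sides are $+\infty$. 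So this is the same argument, carried out more carefully, and it is the right way to make the paper's terse computation rigorous.
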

\begin{proof}
We write $K_\Psi(z,w) = \sum_{k=0}^{\infty} e_k(z)
\overline{e_k(w)},$ where $(e_k)$ is an orthonormal basis for $\A$.
The lemma is then proved by means of the following computation:
\[ \Tr(T)   =   \sum_{k=0}^{\infty} \langle T f_k,   f_k\rangle_{\A}
= \int_{\cn} \langle TK_\Psi(\cdot , z),K_\Psi(\cdot , z)
\rangle_{\A} d\mu_\Psi(z).\]
\end{proof}

%\begin{thm}\label{schattentop}  Suppose that $p\geq 1.$ If $ \nu$ is positive Borel measure on $\cn$, then the following are equivalent
%
%(i) The Toeplitz oerator $T_\nu$ is in the Schatten class $\cS_p$.
%
%(ii) There is  $0<r< c/8$  such that
%$$ \sum_{k =1}^{ + \infty} \left(\frac{\nu( B(a_k, r))}{ |B(a_k, r)|} \right)^p < +\infty.$$
% where $(a_k) $ is as in Lemma~\ref{covering2}.
%
% (iii) The function
% $$\widetilde{\nu}(z):= \int_{\cn} \frac{|K_\Psi(w , z)|^2}{K_\Psi(z , z)} d\nu(w)$$ is in
% $L^p(K_\Psi(z , z)\mu_\Psi(z)).$
%\end{thm}
\begin{proof}[Proof of Theorem~E]
We begin by assuming that $T_\nu$ is in $\cS_p$. Pick a
$\Psi$-lattice $(a_j)$ and let $r$ be its maximal covering radius.
By \eqref{ball-vol} and Lemma~\ref{localball}, we have \[ \aligned
\sum_{k}\left( \frac{\nu( B(a_k, r))}{ |B(a_k, r)|} \right)^p &
\simeq \sum_{k} \left(\int_{ B(a_k, r) } K_{\Psi}(w,w)
d\nu_{\Psi}(w)\right)^p \\ & \simeq \sum_k \left(\int_{B(a_k, r) }
\frac{|K_{\Psi}(a_k,w)|^2}{ K_{\Psi}(a_k,a_k) }
d\nu_{\Psi}(w)\right)^p. \endaligned
\]
By Lemma~\ref{covering2} and our assumption on $\nu$, this gives
\[
\sum_{k}\left( \frac{\nu( B(a_k, r))}{ |B(a_k, r)|} \right)^p
\lesssim  \sum_k \left(\int_{\cn }    \frac{|K_{\Psi}(a_k,w)|^2}{
K_{\Psi}(a_k,a_k) } d\mu_{\Psi}(w) \right)^p. \] If we construct $J$
as in Lemma~\ref{boundedJ}, then the right-hand side equals $
\sum_{k}\left| \langle J^\ast T_\nu J e_k , e_k\rangle \right|^p$.
Since $J$ is a bounded operator, $J^\ast T_\nu J $ also belongs to $
\cS_p$, and so the latter sum converges. We conclude that (i)
implies (ii).

We will use an interpolation argument to prove that (ii) implies
(i). We already know from Theorem~D that $T_\nu$ is in the Schatten
class $\cS_\infty$ whenever $\nu( B(a_k, r))\le C |B(a_k, r)|$ for
some positive constant $C$. Suppose now that
$$ \sum_{k} \frac{\nu( B(a_k, r))}{ |B(a_k, r)|}  < +\infty,$$
and let $(e_j)$ be an orthonormal basis for $\A$. By the reproducing
formula, we have
$$\langle T_\nu e_j , e_j\rangle = \int_{\cn}   |e_j(w)|^2 d\nu_{\Psi}(w),$$
which implies that
\[ \sum_{j} | \langle T_\nu e_j , e_j\rangle |  =
 \int_{\cn}    K_{\Psi}(w,w) d\nu_{\Psi} (w)\le
  \sum_{k} \int_{B(a_k, r) } K_{\Psi}(w,w)d\nu_{\Psi}(w).\]
 Using again Lemma~\ref{lemmah}, we then get
\[ \sum_{j} | \langle T_\nu e_j , e_j\rangle | \lesssim
\sum_{k} \frac{\nu( B(a_k, r))}{ |B(a_k, r)|} < +\infty,\] which
means that $T_\nu$ belongs to $\cS_1.$  By interpolation, we
conclude that (ii) implies (i).
\end{proof}

We remark that the theorems proved in this section generalize
results for the classical Fock space when $n=1$ obtained recently in
\cite{IZ}. It may be noted that Theorem~D above could be elaborated
to include two additional conditions for membership in $\cS_p$, in
accordance with Theorem~4.4 in \cite{IZ}. The proof would be
essentially the same as the proof of the latter theorem. Note that
\cite{IZ} also treats Schatten class membership of Toeplitz
operators for $p<1$.

 \section{Schatten class membership of Hankel operators }

Our work so far suggests two possible definitions of Besov spaces,
in accordance with our respective definitions of $\bmoa(\Psi)$ and
$\M$. We let $\Mmp$ denote the set of entire functions $f$ such that
\[ \int_{\cn} [\mo f(z)]^p K_{\Psi}(z,z)d\mu_{\Psi}(z)<\infty;\]
for a function $h:\cn\to\cn$, we set
\[ |h(z)|_{\beta}=\sup_{\xi \in \cn\setminus\{0\} } \frac{\left
| \langle h(z), \overline{\xi} \rangle\right |}{\beta(z, \xi)},\]
and we let $\Mdp$ be the set of entire functions $f$ for which
\[ \int_{\cn} |\nabla f(z)|_\beta^p K_{\Psi}(z,z)d\mu_{\Psi}(z)<\infty.\]
These definitions are in line with those of K. Zhu for Hankel
operators on the Bergman space of the unit ball in $\cn$ \cite{Zh1}.

It is immediate from \eqref{bmoatobloch} that $\Mmp\subset \Mdp$.
The basic question is whether these spaces coincide and in fact
characterize Schatten class Hankel operators with anti-holomorphic
symbols. The following theorem gives an affirmative answer to this
question.

\newtheorem*{thF}{\bf Theorem F}
\begin{thF}  Let $\Psi$ be a logarithmic growth function, and
suppose that there exists a real number $\eta<1/2$ such that
\eqref{basic} holds and that \eqref{basic2} holds if $n>1$. If $f$
is an entire function on $\cn$ and $p\ge 2$, then the following
statements are equivalent:
\begin{itemize}
\item[(i)]The function $f$ belongs to $\cT(\Psi)$ and the Hankel operator $H_{\bar f} $ on $\A$
is in the Schatten class $\cS_p$;
\item[(ii)] The function $f$ belongs to $\Mmp$;
\item[(iii)] The function $f$ belongs to $\Mdp$.
\end{itemize}
\end{thF}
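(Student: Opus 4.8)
The plan is to establish the cycle $(\mathrm{ii})\Rightarrow(\mathrm{iii})\Rightarrow(\mathrm{i})\Rightarrow(\mathrm{ii})$. Of these, $(\mathrm{ii})\Rightarrow(\mathrm{iii})$ is immediate from \eqref{bmoatobloch}: since $|\nabla f(z)|_\beta\le 2\sqrt2\,(\mo f)(z)$ pointwise, we get $\int_{\cn}|\nabla f(z)|_\beta^{p}K_\Psi(z,z)\,d\mu_\Psi(z)\le (2\sqrt2)^{p}\int_{\cn}(\mo f(z))^{p}K_\Psi(z,z)\,d\mu_\Psi(z)$, so $f\in\Mmp$ forces $f\in\Mdp$. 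For $(\mathrm{i})\Rightarrow(\mathrm{ii})$, assume $H_{\bar f}\in\cS_p$, so the positive operator $T:=H_{\bar f}^{\ast}H_{\bar f}$ lies in $\cS_{p/2}$. Since $f$ is entire, \eqref{general} gives $\widetilde T(z)=\|H_{\bar f}K_\Psi(\cdot,z)\|^{2}/K_\Psi(z,z)=(\mo f)(z)^{2}$. As $t\mapsto t^{p/2}$ is convex on $[0,\infty)$ for $p\ge 2$, Jensen's inequality applied to the probability measure given by the spectral measure of $T$ at the normalized kernel $k_z=K_\Psi(\cdot,z)/\sqrt{K_\Psi(z,z)}$ yields $(\mo f)(z)^{p}=\widetilde T(z)^{p/2}\le\widetilde{T^{p/2}}(z)$. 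Integrating and using Lemma~\ref{traceT},
\[
\int_{\cn}(\mo f(z))^{p}K_\Psi(z,z)\,d\mu_\Psi(z)\le\int_{\cn}\widetilde{T^{p/2}}(z)K_\Psi(z,z)\,d\mu_\Psi(z)=\Tr\!\big(T^{p/2}\big)=\|H_{\bar f}\|_{\cS_p}^{p}<\infty,
\]
so $f\in\Mmp$ (with $\|f\|_{\Mmp}\le\|H_{\bar f}\|_{\cS_p}$).

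The substantial part is $(\mathrm{iii})\Rightarrow(\mathrm{i})$. The first point is that $\Mdp\subset\littlebloch$: after the affine change of coordinates turning the ellipsoid $D(z,M)$, comparable to $B(z,r)$ by Lemma~\ref{bergmannball}, into a Euclidean ball, $|\nabla f|_\beta^{p}$ becomes comparable on $B(z,r)$ to the $p$-th power of a norm of a holomorphic vector field, hence (for $p\ge 2$) subharmonic; combined with $|B(z,r)|^{-1}\simeq K_\Psi(w,w)e^{-\Psi(|w|^{2})}$ for $w\in B(z,r)$ (Lemma~\ref{lemmah} and \eqref{ball-vol}), this gives a sub-mean-value bound $|\nabla f(z)|_\beta^{p}\lesssim\int_{B(z,r)}|\nabla f(w)|_\beta^{p}K_\Psi(w,w)\,d\mu_\Psi(w)$, whose right side tends to $0$ as $|z|\to\infty$. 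In particular $f\in\M$, so by Theorem~A the operator $H_{\bar f}$ is bounded and, for every holomorphic polynomial $g$, the function $H_{\bar f}g=(I-P_\Psi)(\bar fg)$ is the minimal-norm $L^{2}(\mu_\Psi)$-solution of $\overline{\partial}u=g\,\overline{\nabla f}$ (it lies in $\A^{\perp}=(\ker\overline\partial)^{\perp}$ and has the prescribed $\overline\partial$).

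Now I would apply Hörmander's $L^{2}$ estimate on $\cn$ with the strictly plurisubharmonic weight $\Psi(|z|^{2})$, whose complex Hessian form is $\Psi'(|z|^{2})|\xi|^{2}+\Psi''(|z|^{2})|\langle z,\xi\rangle|^{2}$; by Theorem~B this equals $(1+o(1))\beta^{2}(z,\xi)$, and by compactness for bounded $|z|$ it is $\simeq\beta^{2}(z,\xi)$ globally. Since the dual norm of $\overline{\nabla f(z)}$ with respect to this Hessian is exactly $\sup_{\xi\neq0}|\langle\nabla f(z),\overline\xi\rangle|/\beta(z,\xi)=|\nabla f(z)|_\beta$ up to uniformly bounded factors, Hörmander's inequality gives
\[
\|H_{\bar f}g\|^{2}=\|(I-P_\Psi)(\bar fg)\|^{2}\lesssim\int_{\cn}|g(w)|^{2}|\nabla f(w)|_\beta^{2}\,d\mu_\Psi(w)=\langle T_{\nu_f}g,g\rangle,\qquad d\nu_f(w):=|\nabla f(w)|_\beta^{2}\,dV(w).
\]
Hence $0\le H_{\bar f}^{\ast}H_{\bar f}\le C\,T_{\nu_f}$ on $\A$, so by the min‑max principle $\lambda_k(H_{\bar f}^{\ast}H_{\bar f})\le C\lambda_k(T_{\nu_f})$ and $\|H_{\bar f}\|_{\cS_p}^{p}\le C^{p/2}\|T_{\nu_f}\|_{\cS_{p/2}}^{p/2}$. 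Because $p/2\ge1$, Theorem~E applies, so it suffices to check that $\sum_k\big(\nu_f(B(a_k,r))/|B(a_k,r)|\big)^{p/2}<\infty$ for a $\Psi$-lattice $(a_k)$ with maximal covering radius $r$. By Jensen's inequality ($p/2\ge1$) this sum is bounded by $\sum_k|B(a_k,r)|^{-1}\int_{B(a_k,r)}|\nabla f|_\beta^{p}\,dV$, and then using $|B(a_k,r)|^{-1}\simeq K_\Psi(w,w)e^{-\Psi(|w|^{2})}$ on $B(a_k,r)$ and the finite-overlap property of the balls $B(a_k,r)$ (Lemma~\ref{covering2}) we obtain the bound $\lesssim\int_{\cn}|\nabla f(w)|_\beta^{p}K_\Psi(w,w)\,d\mu_\Psi(w)<\infty$, which is finite precisely because $f\in\Mdp$. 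Thus $T_{\nu_f}\in\cS_{p/2}$ and $H_{\bar f}\in\cS_p$.

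The main obstacle is $(\mathrm{iii})\Rightarrow(\mathrm{i})$, and within it the delicate point is aligning the weighted $\overline\partial$-estimate with the intrinsic Bloch quantity $|\nabla f|_\beta$: one must verify that the inverse complex Hessian of $\Psi(|z|^{2})$ reproduces the norm $\sup_\xi|\langle\nabla f(z),\overline\xi\rangle|/\beta(z,\xi)$ up to uniformly bounded factors — this is where Theorem~B is indispensable — and that the local ellipsoidal description of Bergman balls (Lemma~\ref{bergmannball}, \eqref{ball-vol}) can be propagated through a sub-mean-value inequality for the holomorphic vector field $\nabla f$. Once these are in place, the monotonicity of singular values, Theorem~E, and the covering lemmas of Section~7 close the argument routinely; the only remaining subtlety, that $\Mdp$ consists of little-Bloch functions so that $H_{\bar f}$ is a priori well defined and bounded, is handled by the same mean-value estimate together with Theorem~A.
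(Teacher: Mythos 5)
Your proof is correct and follows essentially the same route as the paper: (ii)$\Rightarrow$(iii) from \eqref{bmoatobloch}, (i)$\Rightarrow$(ii) via Lemma~\ref{traceT} and Jensen's inequality for the spectral measure at the normalized kernel, and (iii)$\Rightarrow$(i) via the Demailly/Berndtsson--Charpentier $\overline{\partial}$-estimate giving $H_{\bar f}^\ast H_{\bar f}\lesssim T_{\nu_f}$, followed by Theorem~E and the covering lemma. The only addition beyond the paper's argument is your preliminary observation that $\Mdp\subset\littlebloch$ (to secure a priori boundedness before invoking the minimal-solution characterization), which is harmless but not needed, since the $\overline{\partial}$-estimate already furnishes boundedness directly, as the paper notes at the start of Section~5.
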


\begin{proof} We have already observed that the implication (ii)
$\Rightarrow$ (iii) is an immediate consequence of
\eqref{bmoatobloch}. The implication (i) $\Rightarrow$ (ii) relies
on the following general Hilbert space argument. If (i) holds, then
the operator $[H_{\bar f}^{\ast}H_{\bar f}]^{\frac{p}{2}} $ is in
the trace class $ \cS_1.$ Applying Lemma~\ref{traceT} and using the
spectral theorem along with H\"{o}lder's inequality, we obtain
$$    \aligned  \Tr\left([H_{\bar f}^{\ast}H_{\bar f}]^{\frac{p}{2}}\right) &  =  \int_{\cn} \langle [H_{\bar f}^{\ast}H_{\bar f}]^{\frac{p}{2}}  K_{\Psi}(\cdot ,z), K_{\Psi}(\cdot ,z)\rangle d\mu_\Psi(z)
\\
&  \gtrsim  \int_{\cn}\left[\frac{ \|H_{\bar f} K_{\Psi}(\cdot
,z)\|^2 }{K_{\Psi}(z,z) }\right]^{\frac{p}{2}}  K_{\Psi}(z,z)
d\mu_\Psi(z).\endaligned $$ Recalling the computation made in
\eqref{general}, we arrive at (ii).

Our proof of the implication (iii) $\Rightarrow$ (i) will use a
version of L. H\"{o}rmander's $L^2$ estimates for the
$\overline{\partial}$ operator. To this end, write
$\Delta_{\Psi}(z)=\Psi(|z|^2)$ and observe that
\[
\alpha^2(z, \xi):= \sum_{j, k =1}^n\frac{\partial^2
\Delta_{\Psi}(z)} {\partial z_j
\partial \bar z_k}  \xi_j
\bar \xi_k=|\xi|^2 \Psi'(|z|^2)+|\langle z,\xi\rangle|^2
\Psi''(|z|^2)\] for arbitrary vectors $z=(z_1,...,z_n)$ and
$\xi=(\xi_1,...,\xi_n)$ in $\cn$. By Theorem~B, we therefore have
$\alpha(z,\xi)\simeq\beta(z,\xi)$. Now let $L^2_\beta(\mu_\Psi)$ be
the space of vector valued functions $h = (h_1, \cdots h_n)$,
identified with the corresponding $(0,1)$-forms $ h_1 d\bar z_1 +
\cdots + h_nd\bar z_n$ such that
\[ \|h\|_{L^2_\beta(\mu_{\Psi})}^2:=\int_{\cn} |h(z)|_{\beta}^2
d\mu_{\Psi}(z)<\infty. \] It follows from Theorem 2.2 in \cite{BC}
(a special case of a theorem proved by J.-P. Demailly in \cite{D})
that the operator $S$ giving the canonical solution to the
$\bar\partial$-problem is bounded from {\bf $L^2_{\beta}(\mu_\Psi)$
}into $L^2(\mu_\Psi)$.

Since $f$ is holomorphic, we have
$$\bar\partial(H_{\bar f}g) =  \overline{\nabla f}g$$
when $g$ is in $\A$, whence $H_{\bar f}g=S(\overline{\nabla f}g)$.
Thus it follows that
\begin{equation}\label{solution}\|H_{\bar f}g\|_{L^2(\mu_{\Psi})}\lesssim
\int_{\cn} |\nabla f(z)|_\beta^2 |g(z)|^2d\mu_{\Psi}(z).
\end{equation} If we set $d\nu(z)=|\nabla f(z)|_\beta^2dV(z)$, this may be written as
$$H_{\bar f}^{\ast} H_{\bar f}\lesssim M_{|\nabla f|_{\beta}}^\ast
M_{|\nabla f|_\beta} = T_{\nu},$$ where as before $M_h$ denotes the
operator of multiplication by $h$ from $\A$ into $L^2(\mu_{\Psi})$.
By Theorem~E, it remains to verify that (iii) implies that for some
$\Psi$-lattice $(a_k)$ we have
\begin{equation}\label{sufsch}
\sum_{k =1}^{\infty} \left(\frac{\nu( B(a_k, r))}{ |B(a_k, r)|}
\right)^{p/2} < +\infty,\end{equation} where $r$ is the maximal
covering radius for $(a_k)$. To this end, we first observe that
H\"{o}lder's inequality gives that
\[
\left(\frac{\nu( B(z, r))}{ |B(z, r)|} \right)^{p/2}
 \lesssim  \frac{1}{|B(z, r)|}  \int_{B(z, r)} |\nabla f(z)|_{\beta}^p dV(w).\]
Hence, using \eqref{ball-vol} and Lemma~\ref{lemmah}, we obtain
\[\left(\frac{\nu( B(z, r))}{ |B(z, r)|} \right)^{p/2}
 \lesssim  \int_{B(z, r)} |\nabla f(z)|_{\beta}^p K(z,z)dV(w).\]
Now choosing any $\Psi$-lattice $(a_k)$ and using
Lemma~\ref{covering2}, we arrive at
\eqref{sufsch}.
\end{proof}

Several remarks are in order. First note that \eqref{solution} gives
another proof of the implication (iii) $\Rightarrow$ (i) in
Theorem~A, subject to the additional smoothness condition
\eqref{basic2}. Second, as shown in \cite{BY1},  there are
nontrivial Hankel operators in $\cS_p$ only when $p>2n$. This fact
is easy to see from Theorem~F when $n=1$, because then
\[ |\nabla f(z)|_\beta\simeq |f'(z)|[\Phi'(|z|^2)]^{-1/2}, \]
whence $f$ is in $\Mdp$ if and only if
\begin{equation}\label{besov1} \int_{\C} |f'(z)|^p
[\Phi'(|z|^2)]^{1-p/2} dV(z)<\infty.\end{equation} When $n>1$, the
computation of $|\nabla f(z)|_\beta$ is less straightforward, but we
always have
\[ |\nabla f(z)|[\Phi'(|z|^2)]^{-1/2}\lesssim |\nabla f(z)|_\beta
\lesssim  |\nabla f(z)|[\Psi'(|z|^2)]^{-1/2}. \] The estimate from
above shows that the condition \begin{equation}\label{besovn}
\int_{\cn} |\nabla f(z)|^p \Phi'(|z|^2)[\Psi'(|z|^2)]^{n-1-p/2}
dV(z)<\infty\end{equation} is sufficient for $f$ to belong to
$\Mdp$, and the estimate from below shows that this is also
necessary when $\Phi'/\Psi'$ is a bounded function.  We conclude
from \eqref{besov1} and \eqref{besovn} that if the growth of $\Psi'$
is super-polynomial, then $\Mdp$ is infinite-dimensional and
contains all polynomials if and only if $p>2n$. This is immediate
when $n=1$, and it follows also when $n>1$ because
\[ \int_{0}^\infty \frac{\Psi''(t)}{[\Psi'(t)]^{1+\delta}}dt\le
\frac{1}{\delta [\Psi'(0)]^{\delta}}<\infty \] for every $\delta>0$.
 If, on the other hand, $\Psi$
is a polynomial, then $\Phi'/\Psi'$ is a bounded function, and one
may use \eqref{besovn} and Theorem~F to deduce Theorem~B in
\cite{BY2}.

It is not hard to check that if $f$ is a monomial and $n>1$, then
\[|\nabla f(z)|_\beta\simeq |\nabla f(z)||[\Psi'(|z|^2)]^{-1/2}\] for
$z$ belonging to a set of infinite volume measure. By Lemma~2.12 in
\cite{BY2} and Theorem~F above, one may therefore conclude as in
\cite{BY2} that $\Mdp$ is nontrivial only if $p>2n$.

\end{document}